\documentclass[12pt]{article}
\usepackage[margin=1.1in]{geometry}
\usepackage{graphicx}
\usepackage[labelfont=bf]{caption}
\usepackage{amsmath,amssymb,amsthm,bm,xfrac}
\usepackage{macros}
\usepackage{mdframed}
\usepackage{hyperref}
\usepackage{cancel}
\usepackage[capitalize]{cleveref}
\usepackage[dvipsnames]{xcolor}
\usepackage{subcaption}
%\captionsetup[figure]

% Sort multiple citations in text.
\usepackage[sort,numbers]{natbib}

\newcommand{\ceil}[1]{\left\lceil {#1} \right\rceil}

% Varius radii used for assumptions on initial distance
\newcommand{\radius}{\varphi}

% Constants for strong-(a) property and (b)-regularity
\newcommand{\constb}{C_{(b)}}
\newcommand{\bundle}{\mathtt{PolyakBundle}}
\newcommand{\polyak}{\mathtt{PolyakSGM}}
\newcommand{\superpolyak}{\mathtt{SuperPolyak}}

\newcommand{\est}{\mathsf{est}}

% Constant C for superlinear convergence: f(x_{k+1}) < C f(x_k)^{1+\eta}
\newcommand{\constsuper}{C_{\mathsf{s}}}

% Constant \eta' := \frac{\eta / 2}{1 + \eta / 2}.

% Inner and outer iteration indices
\newcommand{\outind}{k}
\newcommand{\inner}{i}

% Set-relation arrows
\newcommand{\multo}{\rightrightarrows}

% Subgradient mapping and Jacobian
\newcommand{\subg}{g}
\newcommand{\jac}{G}

% Closed unit ball symbol
\newcommand{\closedball}{\bar{B}}

\newtheorem{theorem}{Theorem}[section]
\newtheorem{example}{Example}[section]
\newtheorem{lemma}[theorem]{Lemma}
\newtheorem{corollary}[theorem]{Corollary}
\Crefname{assumption}{Assumption}{Assumption}
\Crefname{corollary}{Corollary}{Corollaries}
\Crefname{theorem}{Theorem}{Theorems}
\Crefname{lemma}{Lemma}{Lemmas}
\Crefname{claim}{Claim}{Claims}

\theoremstyle{remark}
\newtheorem{remark}{Remark}
\theoremstyle{definition}

\Crefname{fact}{Fact}{Facts}

\usepackage{algorithm}
\usepackage{algpseudocode}
\delimiterfactor=750
\allowdisplaybreaks

\begin{document}

	\title{A superlinearly convergent subgradient method for sharp semismooth problems}

	\author{
    Vasileios Charisopoulos\thanks{School of Operations Research and Information Engineering, Cornell University, Ithaca, NY 14853, USA;
    \texttt{people.orie.cornell.edu/vc333/}} \and
    Damek Davis\thanks{School of Operations Research and Information Engineering, Cornell University, Ithaca, NY 14850, USA;
    \texttt{people.orie.cornell.edu/dsd95/}. Research of Davis supported by an Alfred P. Sloan research fellowship and NSF DMS award 2047637.}}
\maketitle

\begin{abstract}~
Subgradient methods comprise a fundamental class of nonsmooth optimization algorithms.
Classical results show that certain subgradient methods converge sublinearly for general Lipschitz convex functions and converge linearly for convex functions that grow sharply away from solutions.
Recent work has moreover extended these results to certain nonconvex problems.
In this work we seek to improve the complexity of these algorithms, asking: is it possible to design a superlinearly convergent subgradient method?
We provide a positive answer to this question for a broad class of sharp  semismooth functions.
\end{abstract}

\section{Introduction}

Subgradient methods are a popular class of nonsmooth optimization algorithms for minimizing locally Lipschitz functions $f \colon \RR^d \rightarrow \RR$:
$$
\minimize_{x \in \RR^d}\;  f(x).
$$
Given an initial iterate $x_0 \in \RR^d$, the basic method repeats
$$
x_{k+1} = x_k - \alpha_k v_k \qquad \text{for $v_k \in \partial f(x_k)$},
$$
where $\{\alpha_k\}$ is a control sequence and $\partial f(x)$ denotes the \emph{Clarke subdifferential} at a point $x \in \RR^d$, comprised of limiting convex combinations of gradients at nearby points~\cite{RW98}.
While the method originated over fifty years ago in convex optimization~\cite{Goffin77,erem_subgrad,Polyak69,shor_rate_subgrad,subgrad_surv} (with later extensions to nonconvex problems~\cite{Nurminskii1973,Nurminskii1974,norkin85,ermol1998stochastic,davis2020stochastic}), it has recently become a popular and successful technique both in modern deep learning problems (e.g., in Google's Tensorflow~\cite{Tensorflow16}) and in robust low-rank matrix estimation problems~\cite{CCD+21}.
For the latter problem class, recent work has highlighted the prevalence and benefits of the so-called \emph{sharp growth} property, which stipulates that $f$ grows at least linearly away from its minimizers:
$$
f(x) - \inf f \geq \mu \cdot \dist(x, \cX_\ast),
$$
where $\cX_\ast = \argmin f$.
For convex problems (and more generally \emph{weakly convex} problems),
this classical regularity condition leads to local linear convergence provided the 
sequence $\{\alpha_k\}$ is chosen appropriately (see also \cite{Polyak69,Goffin77,shor_rate_subgrad,erem_subgrad,stagg,rsg,jstone,DDMP18}).
While linear convergence is desirable, we ask:
\begin{quote}
Is it possible to design a locally superlinearly convergent subgradient method?
\end{quote}
In this paper, we design such a method for a wide class of \emph{sharp} and \emph{semismooth} problems.

Setting the stage, assume for simplicity that $f$ has a unique minimizer $\bar x$ and optimal value $0$. The starting point for our method is the classical subgradient method with Polyak stepsize, which iterates
$$
x_{k+1} = x_k - \frac{f(x_k)}{\|v_k\|^2}v_k, \qquad \text{where $v_k \in \partial f(x_k)$.}
$$
This method converges linearly for sharp convex~\cite{Polyak69} and weakly
convex~\cite{DDMP18} problems and admits the following reformulation:
\begin{align}\label{eq:polyakreform}
x_{k+1} = \argmin_{x \in \RR^d} \|x - x_k\|^2 \qquad \text{subject to: $f(x_k) + \dotp{v_k, x - x_k} \leq 0$.}
\end{align}
Seeking to improve the linear convergence of~\eqref{eq:polyakreform}, a natural strategy proposed in Polyak's original work~\cite{Polyak69} is to augment the constraint~\eqref{eq:polyakreform} with a collection $\{(y_{i}, v_{i})\}_{i= 1}^n$ of points $y_{i}$ and subgradients $v_{i} \in \partial f(y_{i})$, resulting in the update:
\begin{align}\label{eq:polyakreform2}
x_{k+1} = \argmin_{x \in \RR^d} \|x - x_k\|^2 \qquad \text{subject to: $\begin{bmatrix} f(y_{i}) + \dotp{v_{i}, x - y_{i}}\end{bmatrix}_{i=1}^n \leq 0$},
\end{align}
The work~\cite{Polyak69} suggests choosing $y_i$ among $\{x_j\}_{j \leq k}$ and shows that the iterates $x_k$ converge linearly for Lipschitz convex functions (similar to~\eqref{eq:polyakreform}). 
While there is no theoretical convergence rate improvement, the work~\cite{Polyak69} suggests the method~\eqref{eq:polyakreform2} improves upon~\eqref{eq:polyakreform} numerically, though the per-iteration cost may grow substantially if $n$ is large. 

A strategy akin to~\eqref{eq:polyakreform2} also appears in the literature on so-called  \emph{bundle methods}~\cite{lemarechal1975extension,wolfe1975method}. Instead of aggregating inequalities as in~\cite{Polyak69}, these methods build piecewise linear models of the objective function and output the proximal point of the models. If the proximal point sufficiently decreases the objective, the algorithm takes a ``serious step." Otherwise, the algorithm takes a ``null step," which consists of using subgradient information to improve the model. Bundle methods often perform well in practice and their convergence/complexity theory is understood in several settings~\cite{kiwiel1985linearization,kiwiel2000efficiency,oliveira2014bundle,du2017rate,emiel2010incremental,sagastizabal2012divide,hare2010redistributed,liang2021proximal}. Most relevantly for this work, on sharp convex functions, variants of the bundle method converge superlinearly relative to the number of serious steps~\cite{vuoriginal} and converge linearly relative to both serious and null steps~\cite{diaz2021optimal}.

In this work, we study a slight variant of the update~\eqref{eq:polyakreform2}, where the ``$\leq$" is replaced by an equality and the points $y_i$ are chosen iteratively.
This variant is motivated by our second assumption -- semismoothness. In short, semismoothness ensures that $\bar x$ is nearly feasible for the equation $f(x_k) + \dotp{v, x - x_k} = 0$ when $x_k$ is near $\bar x$ and $v \in \partial f(x_k)$.
More formally, the function $f$ is \emph{semismooth} at $\bar x$~\cite{Mifflin77} whenever
\begin{align}\label{eq:semismoothclassical}
f(x) + \dotp{v, \bar x - x} = o(\|\bar x - x\|) \qquad \text{ as $x \rightarrow \bar x$ and $v \in \partial f(x)$},
\end{align}
where $o(\cdot)$ is any univariate function satisfying $\lim_{t \rightarrow 0} o(t)/t = 0$.
While it may at first seem stringent, semismoothness is a reasonable assumption since it holds for any locally Lipschitz weakly convex~\cite{Mifflin77} or semialgebraic function~\cite{BDL09}.

Turning to our main algorithm, we depart from the quadratic programming problem of~\eqref{eq:polyakreform2} and instead construct both our iterates $x_k$ and the collection $\{(y_i, v_i)\}_i$ by solving a sequence of linear systems, a simpler operation in general.
At iteration $k$, we construct the collection as follows:
%Turning to the construction of the collection $\{(y_i, v_i)\}_i$, we use the following simple procedure:  
set initial point $y_{0} = x_k$, choose subgradient $v_{0} \in \partial f(y_{0})$, and for $j = 1, \ldots, d$, recursively set
\begin{align}\label{eq:polyakreform3}
y_{j} &:= \argmin_{x \in \RR^d} \|x - x_k\|^2 \qquad \text{subject to: $\begin{bmatrix} f(y_{i}) + \dotp{v_{i}, x - y_{i}}\end{bmatrix}_{i=0}^{j-1} = 0$}
\end{align}
and choose $v_{j} \in \partial f(y_{j})$ arbitrarily.
For this collection, we will show that the next iterate
$$
x_{k+1} \in \argmin_{y \in \{y_i\}_i} f(y) \qquad \text{ satisfies } \qquad f(x_{k+1}) = o(f(x_k)) \quad \text{ as $k \rightarrow \infty.$}
$$
The construction of $y_i$ may at first seem mysterious, but its success results from a simple ``lemma of alternatives" proved in this work. Namely,
suppose that the first $j-1$ elements $y_1, \ldots, y_{j-1}$ do not superlinearly improve on $x_k$. Then we prove that one of the following must hold: either $y_j$ superlinearly improves upon $x_k$ or the rank of $[v_i^\T]_{i = 0}^{j}$ is $j+1$. 
In this way we must obtain local superlinear improvement in at most $d$ steps.

Thus, for sharp semismooth functions, simply repeating~\eqref{eq:polyakreform3} will result in superlinear convergence in a small, dimension dependent neighborhood of $\bar x$.
While this method converges superlinearly, its theoretical region of admissible initializers is small. 
Our numerical experiments suggest this may be a limitation of the analysis, rather than of the algorithm. 
Nevertheless, it is desirable to have a linearly convergent fallback method that quickly reaches the region of superlinear convergence from a much larger set of initial conditions.
To that end, we extend the linear convergence of the Polyak subgradient method~\eqref{eq:polyakreform} to sharp and semismooth functions (see~\cref{thm:polyak}). 
The argument and result mirror the previous result for weakly convex functions~\cite{DDMP18}.

While the Polyak algorithm eventually reaches the region of superlinear convergence, its entrance may be hard to detect.
Thus, we provide a generic procedure for coupling the superlinear steps~\eqref{eq:polyakreform3} with the Polyak algorithm~\eqref{eq:polyakreform} (or another fallback algorithm), which rapidly converges to the region of superlinear convergence when initialized in a much larger region.  The coupled algorithm may be implemented with knowledge of a single parameter, namely, the optimal value $f(\bar x)$. An intriguing open problem, left to future work, is whether one can design a parameter free variant.

The results stated thus far assume that $\cX_\ast$ is isolated at $\bar x$. We prove that all of the  algorithms analyzed in this work converge superlinearly to nonisolated solutions for functions that are \emph{$(b)$-regular} along  $\cX_\ast$, a natural uniformization of the semismoothness property that was recently analyzed in~\cite{DDJ21}. 
We review and provide several examples of the $(b)$-regularity property and develop a calculus for creating further examples, going beyond the setting of~\cite{DDJ21}.
For example, we show that a composition $f = h\circ F$ is $(b)$-regular along $\cX_\ast$ whenever $(i)$ $F$ is a smooth mapping and $(ii)$ $h$ is a locally Lipschitz semialgebraic function with isolated minimum $\bar y \in \range(F)$.
We use these results to provide useful corollaries for root-finding and feasibility problems  and discuss relations to the literature on semismooth Newton methods~\cite{kummer1988newton,qi1993convergence,facchinei2007finite,izmailov2014newton,klatte2006nonsmooth,qi1993nonsmooth} and accelerations of projection methods~\cite{Pang15,pang2015set}.

Finally, we note that despite local superlinear convergence, the worst-case complexity of the proposed method depends on $d$, a property not in line with the ``dimension free" complexity theory of first-order methods.  
Nevertheless, we found that we may terminate~\eqref{eq:polyakreform3} early in several scenarios, yielding promising empirical performance. 
For example, in Figure~\ref{fig:conditioning-hadamard} we plot the performance of the proposed method, dubbed $\superpolyak$, against the method~\eqref{eq:polyakreform}, dubbed $\polyak$, on a simple low-rank matrix sensing problem. Here the problem of interest is simply 
\begin{align*}
  f(U, V) := \frac{1}{m} \norm{\cA(UV^{\T}) - \cA(\bar M)}_1 \qquad \text{for all $U, V \in \RR^{d \times r}$},
\end{align*}
where $\bar M \in \RR^{d \times d}$ is a fixed rank $r$ matrix and $\cA: \Rbb^{d \times d} \to \Rbb^m$ is a linear operator; see Section~\ref{sec:matrix-sensing} for a more detailed description.
From the the plots, we see the proposed method performs well in terms of time and oracle complexity and appears less sensitive to the condition number $\tilde \kappa$ of the matrix $\bar M$.
Beyond early termination, we also introduce and use several other implementation strategies, including one that reduces the naive arithmetic complexity cost of constructing the points $y_i$ from $O(d^4)$ (ignoring subgradient evaluations) to $O(d^3)$ arithmetic operations. 
With these strategies in place, the advantage of $\superpolyak$ persists in several scenarios outlined in our numerical illustration. 
\begin{figure}[H]
  \centering
  \includegraphics[width=0.95\textwidth]{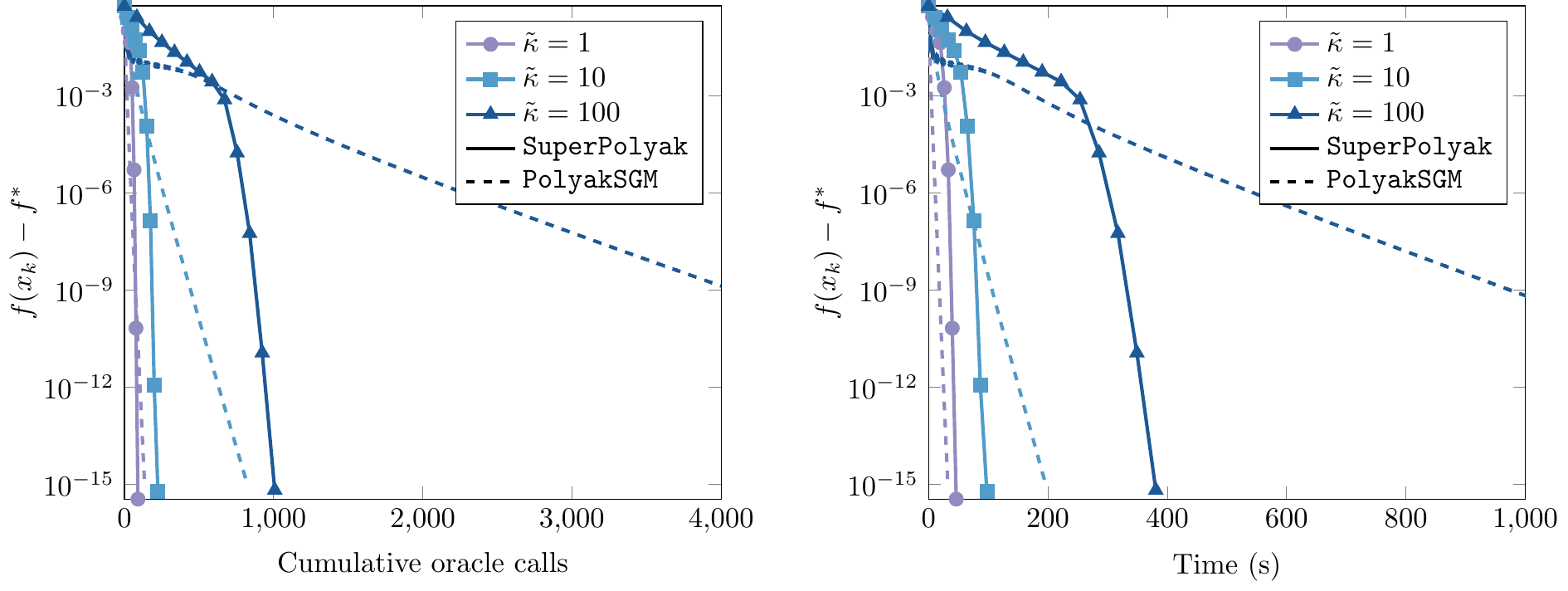}
  \caption{Low-rank matrix sensing with Hadamard measurements, varying condition number $\tilde \kappa$, and parameters $d = 2^{15}$, $r = 2$ and $m = 16d$.
  See~\cref{sec:matrix-sensing} for description.}
  \label{fig:conditioning-hadamard}
\end{figure}

Before turning to the formal statements of the results, the following section formalizes the basic notations and constructions used throughout this work.

\subsection{Notation and basic constructions}

We will mostly follow standard notation used in convex analysis as set out in the monograph~\cite{RW98}.
Throughout, the symbol $\R^d$ will denote a $d$-dimensional Euclidean space with the inner product
$\dotp{\cdot,\cdot}$ and the induced norm $\norm{x} = \sqrt{\dotp{x,x}}$. We denote the open ball of
radius $\varepsilon>0$ around a point $x\in \R^d$ by the symbol $B_{\varepsilon}(x)$. We use the symbol $\closedball$ to denote the closed unit ball at the origin.
A set-valued mapping $G \colon \RR^d \multo \RR^m$ maps points $x \in \RR^d$ to sets $G(x) \subseteq \RR^m$.
We say a set-valued mapping $G$ is nonempty-valued if $G(x)$ is nonempty for every $x \in \RR^d$ and locally bounded if $G(\cX) := \bigcup_{x \in \cX}G(x)$ is a bounded set for any bounded set $\cX \subseteq \RR^d$.
For any set $\cX\subseteq\R^d$, the  {\em distance function} and the {\em projection map} are defined by
\begin{equation*}
\dist(x,\cX):=\inf_{y\in \cX} \|y-x\|\qquad \textrm{and}\qquad
P_\cX(x):=\argmin_{y\in \cX} \|y-x\|,
\end{equation*}
respectively. 
Given a function $f \colon \RR^d \rightarrow \RR$ and $\gamma > 0$, we define the proximal operator $\prox_{\gamma f} \colon \RR^d \multo \RR^m$ of $f$ to be the set-valued mapping with values:
$$
\prox_{\gamma f}(x) := \argmin_{y} \left\{f(y) + \frac{1}{2\gamma}\|y - x\|^2\right\} \qquad \text{for all $x \in \RR^d$}.
$$
We call a function $h\colon \RR^d \rightarrow \RR$ \emph{sublinear} if its epigraph is a closed convex cone, and in that case we define
$$
\text{Lin}(h) = \{ x\in \RR^d\colon  h(x) = - h(-x)\}
$$
to be its \emph{lineality space.} Given a matrix $A$, we denote its spectral norm by $\opnorm{A}$. 

\paragraph{Semialgebraicity.} We call a set $\cX \subseteq \RR^d$ \emph{semialgebraic} if it is the union of finitely many sets defined by finitely many polynomial inequalities. Likewise, we call a function $f \colon \RR^d \rightarrow \RR$ semialgebraic if its graph $\gph(f) = \{(x, f(x)) \colon x\in \RR^d\}$ is semialgebraic. Finally, we call a set-valued mapping $G \colon \RR^d \multo \RR^m$ \emph{semialgebraic} if its graph $\gph(G) = \{(x,y) \colon y \in G(x)\}$ is semialgebraic.

\paragraph {Subdifferentials.}
Consider a locally Lipschitz function $f\colon\R^d\to\R$ and a point $x$. 
%The {\em Fr{\'e}chet subdifferential} of $f$ at $x$, denoted by $\partial_F f(x)$, consists of all vectors $v\in\R^d$ satisfying
%$$f(y)\geq f(x)+\langle v,y-x\rangle+o(\|y-x\|)\qquad \textrm{as }y\to x.$$
%The {\em Limiting subdifferential} of $f$ at $x$, denoted by $\partial_L f(x)$, consists of all vectors $v\in\R^d$ such that there exists sequence $x_i \in \RR^d$ and $v_i \in \partial_F f(x_i)$ satisfying $$(x_i, v_i) \rightarrow (x, v)  \text{ as $i \rightarrow \infty$.}$$
%The {\em Clarke subdifferential} of $f$ at $x$, denoted by $\partial f(x)$, consists of all convex combinations of limiting subgradients
%$$
%\partial f(x) = \text{conv} \partial_L \,f(x).
%$$
The Clarke subdifferential is the convex hull of limits of gradients evaluated at nearby points
$$
\partial f(x) = \text{conv} \left\{ \lim_{i \rightarrow \infty} \nabla f(x_i) \colon x_i \stackrel{\Omega}{\rightarrow} x\right\},
$$
where $\Omega \subseteq \RR^d$ is the set of points at which $f$ is differentiable (recall Radamacher's theorem). 
If $f$ is $L$-Lipschitz on a neighborhood $U$, then for all $x \in U$ and $v \in \partial f(x)$, we have $\|v\| \leq L$. A point $\bar x$ satisfying $0 \in \partial f(x)$ is said to be critical for $f$.
A function $f$ is called {\em $\rho$-weakly convex} on an open convex set $U$ if the perturbed function $f+\frac{\rho}{2}\|\cdot\|^2$ is convex on $U$. The Clarke subgradients of such functions automatically satisfy the uniform approximation property:
$$f(y)\geq f(x)+\langle v,y-x\rangle-\frac{\rho}{2}\|y-x\|^2\qquad \textrm{for all }x,y\in U, v\in \partial f(x).$$
Finally consider a locally Lipschitz mapping $F \colon \RR^d \rightarrow \RR^m$. Then the \emph{Clarke Jacobian of $F$ at $x$} is the set
$$
\partial F(x) = \text{conv} \left\{ \lim_{i \rightarrow \infty} \nabla F(x_i) \colon x_i \stackrel{\Omega}{\rightarrow} x\right\},
$$
where $\Omega \subseteq \RR^d$ is the set of points at which $F$ is differentiable. 

\paragraph{Normal cones.}
Let $\cX$ be a closed set and let $\bar x \in \cX$. The \emph{Fr{\'e}chet normal cone} to $\cX$ at $\bar x$, denoted by $N^F_{\cX}(\bar x)$, consists of all vectors $v \in \RR^d$ satisfying
$$
\dotp{v, x - \bar x} \leq o(\| x - \bar x\|) \qquad \text{ as $x \stackrel{\cX}{\rightarrow} \bar x$.}
$$
The \emph{Limiting normal cone} to $\cX$ at $\bar x$, denoted by $N^L_{\cX}(\bar x)$, consists of all vectors $v \in \RR^d$ such that there exist sequences $x_i \in \cX$ and $v_i \in N_{\cX}^F(x_i)$ satisfying 
$$
(x_i, v_i) \rightarrow (x, v) \text{ as $i \rightarrow \infty$.}
$$
The {\em Clarke normal cone} of $\cX$ at $x$, denoted by $N_{\cX}(x)$, consists of all convex combinations of limiting normal vectors
$$
N_{\cX}(x) = \text{cl}\,\text{conv}\,N^L_{\cX}(\bar x).
$$
The normal cone is related to the distance function as follows:
\begin{align}\label{eq:distfunctionsub}
\partial \dist(x, \cX) = \begin{cases}
\text{conv} \,\frac{x - P_{\cX}(x)}{\dist(x, \cX)} &\text{if $x \notin \cX$;} \\
 N_{\cX}(x) \cap \closedball &\text{otherwise.}
\end{cases}
\end{align}
Finally, we recall that whenever $x \notin \cX$ and $\hat x \in P_{\cX}(x)$, we have $\frac{x - \hat x}{\|x- \hat x\|} \in N_{\cX}(\hat x)$.

\paragraph{Manifolds.}
We will need a few basic results about smooth manifolds, which can be found in
the references~\cite{boumal2020introduction,lee2013smooth}. A set $\cM \subseteq
\RR^d$ is called a $C^p$ smooth manifold (with $p \geq 1$) if there exists a
natural number $m$, an open neighborhood $U$ of $x$,  and a $C^p$ smooth mapping
$F \colon U \rightarrow \RR^m$ such that the Jacobian $\nabla F(x)$ is surjective
and $\cM \cap U = F^{-1}(0)$. The tangent and normal spaces to $\cM$ at
$x \in \cM$ are defined to be $T_\cM(x) = \ker(\nabla F(x))$ and $N_{\cM}(x) =
T_{\cM}(x)^\perp = \range(\nabla F(x)^\ast)$, respectively. 
If $\cM$ is a $C^2$-smooth manifold around a point $\bar x$, then there exists $C > 0$ such that $y - x \in T_{\cM}(x) + C\|y - x\|^2 \closedball$ for all $x, y \in \cM$ near $\bar x$.

\section{Assumptions, algorithms, and main results}\label{sec:aam}

In this section, we introduce our assumptions, algorithms, and main results.
To that end, throughout this work we consider the problem
\begin{equation}
  \minimize_{x \in \RR^d} \; f(x),
  \label{eq:opt-problem}
\end{equation}
where $f: \RR^d \to \RR$ is a locally Lipschitz function with optimal value $f^\ast$. We denote
$
\cX_\ast = \argmin_{x \in \RR^d} f(x)
$
and assume that $\cX_\ast \neq \emptyset$. We also fix a point $\bar x \in \cX_\ast$ and a radius $\delta > 0$, which factor into our initialization assumptions.

\subsection{Main assumptions: sharpness and $(b)$-regularity}
In this section, we formalize our assumptions on the growth and semismoothness of $f$.  We additionally provide three concrete example problem classes. 

\subsubsection{Sharp growth}
Our first technical assumption is that $f$ grows sharply away from $\cX_\ast$:
\begin{enumerate}[label=$\mathrm{(A\arabic*)}$]
  \item \label{item:assumption:main:sharpness}{\bf (Sharpness)} There exists $\mu > 0$ such that the estimate
    \begin{align*}
      f(x) - f^\ast \geq \mu \, \dist(x, \cX_\ast)     \qquad\text{holds for all $x\in B_{\delta}(\bar x)$}.
   \end{align*}
 \end{enumerate}
 Assumption~\ref{item:assumption:main:sharpness} is a classical regularity condition known to ensure (local) linear convergence of subgradient methods in the (weakly) convex setting~\cite{DDMP18}.
Sharp growth is known to hold in a range of problems, most classically in feasibility formulations of linear programs~\cite{Hof52} (see also the survey~\cite{10.5555/3226650.3226819}).
Several contemporary problems also exhibit sharp growth, for example,
nonconvex formulations of low-rank matrix sensing and completion problems~\cite{CCD+21}.

\subsubsection{Semismoothness}
Our second technical assumption is that $f$ satisfies a ``uniform semismoothness" condition with respect to $\cX_\ast$:
\begin{enumerate}[label=$\mathrm{(A\arabic*)}$]
 \setcounter{enumi}{1}
  \item  \label{item:assumption:main:b} {\bf ($(b)$-regularity)} There exists a locally bounded nonempty-valued set-valued mapping
    $\subg \colon \RR^d \multo \RR^d$ and constants $\constb, \eta > 0$ such that the estimate
 \begin{align}
   |f(x) + \dotp{v, y - x} - f^\ast| \leq \constb \norm{y-x}^{1+\eta} \label{eq:bestimate}
  \end{align}
holds for all $x \in B_{\delta}(\bar x)$, $v \in \subg(x)$, and $y \in \cX_\ast \cap B_{2\delta}(\bar x)$.
\end{enumerate}
Assumption~\ref{item:assumption:main:b} is a uniformization of the classical semismoothness property~\eqref{eq:semismoothclassical} of~\cite{Mifflin77}. 
In particular, the estimate~\eqref{eq:bestimate} is identical to~\eqref{eq:semismoothclassical}
when $\cX = \{\bar x\}$, $\subg = \partial f$, and the term $\|\cdot\|^{1+\eta}$ is replaced with any univariate function $o(\cdot)$ satisfying $\lim_{t \rightarrow 0} o(t)/t = 0$.
We require the stronger error modulus $\|\cdot\|^{1+\eta}$ to deal with $\cX_\ast$ that are not singleton sets. 
Importantly
if $\cX_\ast$ is a singleton all results of this work easily generalize to  ``little-$o$" error. 
The recent work \cite{DDJ21} introduced the general $(b)$-regularity estimate, provided several basic examples, and developed a calculus, focusing on the mapping
$\subg = \partial f$.
In Section~\ref{sec:breg}, we recall and extend the results of \cite{DDJ21}, introducing new  examples and proving a formal chain rule for $(b)$-regularity.
The latter result ensures that $\subg$ computed by certain automatic differentiation schemes are valid generalized gradient mappings~\cite{bolte2021conservative}. 
\begin{figure}[!h]
	\centering
\centering\includegraphics[width=.5\textwidth]{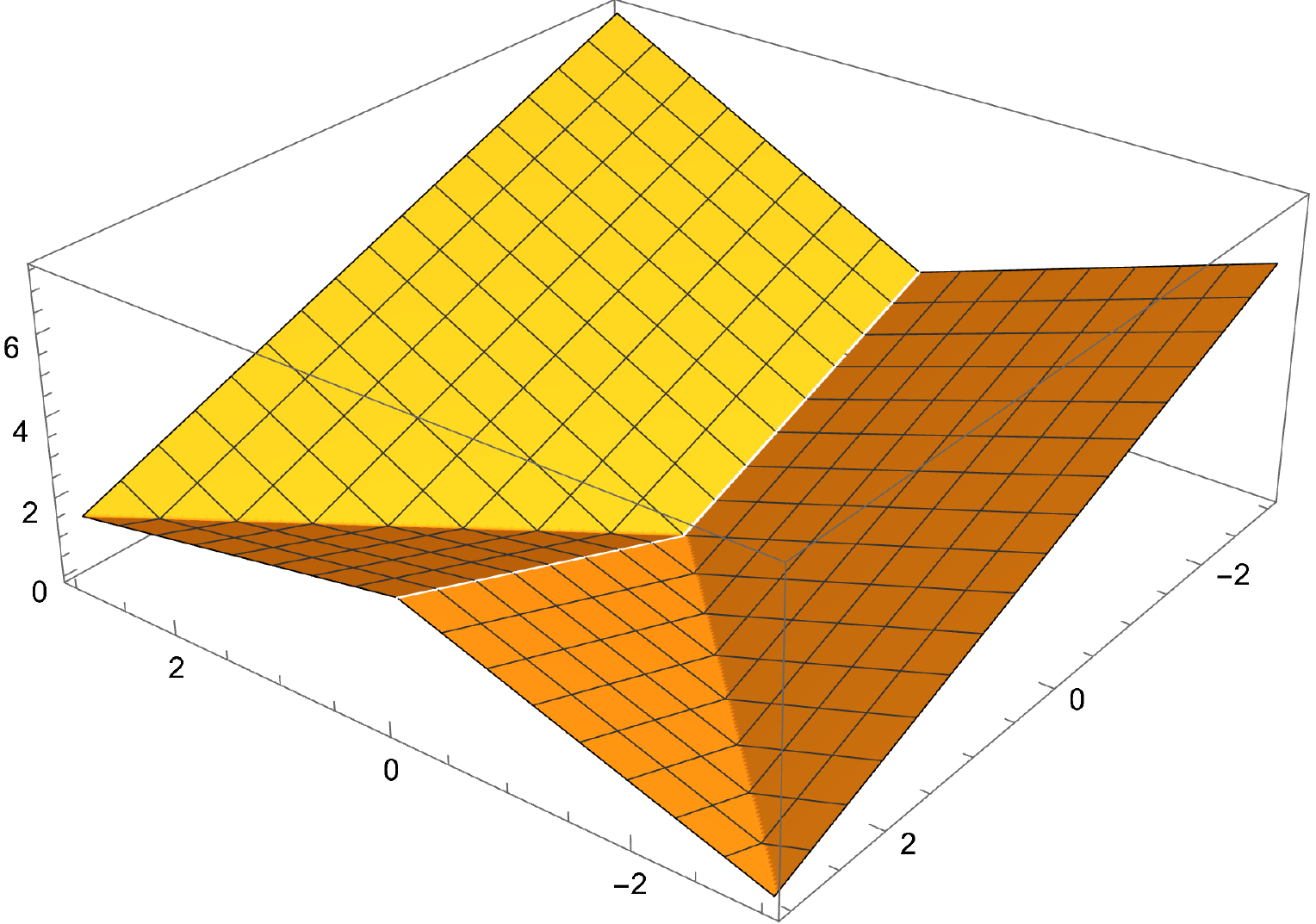}
\caption{The function $f(x,y) = |y- |x|| + \max(x, 0)$ satisfies Assumptions~\ref{item:assumption:main:sharpness} and~\ref{item:assumption:main:b} along the set $\cX_\ast = \{(x, -x)) \colon x \leq 0\}$.}\label{fig:sinfunction}
\end{figure}

\subsubsection{Examples}\label{sec:examplesofbregintro}
We now provide three concrete problem classes where $(b)$-regularity holds.
At the end of the section, we also touch upon sharp growth. 
We present the proofs of all three Propositions in Section~\ref{sec:bregfunctions}.

The first class arises from semialgebraic functions composed with smooth mappings. 
\begin{proposition}\label{prop:basicsemialgebraic}
Consider a $C^2$ smooth mapping $F \colon \RR^d \rightarrow \RR^m$ and a locally Lipschitz semialgebraic function $h \colon \RR^m \rightarrow \RR$. Suppose that $y\in \RR^m$ is an isolated minimum of $h$ and define $\cX_\ast = F^{-1}(y)$. Then for any $\bar x \in \cX_\ast$, the function 
$$
f(x) = h(F(x)) \qquad \text{for all $x \in \RR^d$},
$$ satisfies Assumption~\ref{item:assumption:main:b} along $\cX_\ast$ at $\bar x$ with mapping $\subg$ defined by the formal chain rule:
$$
  \subg(x) = \nabla F(x)^\T \partial h(F(x)) \qquad \text{for all $x \in \RR^d$}.
$$
\end{proposition}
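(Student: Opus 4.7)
The plan is to reduce the composite case to the base case where $h$ is $(b)$-regular at its isolated minimum $y$, and then propagate this through the smooth outer map $F$ via a first-order Taylor expansion. The two ingredients are (i) a semialgebraic fact giving $(b)$-regularity of $h$ at $y$, and (ii) the $C^2$ smoothness of $F$, which controls the discrepancy between $\nabla F(x)(\tilde y - x)$ and $F(\tilde y) - F(x)$.

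First, I would prove the base case: any locally Lipschitz semialgebraic function $h$ with isolated minimum at $y$ satisfies a $(b)$-regularity estimate
\[
  |h(z) + \langle w, y - z\rangle - h(y)| \leq C_0 \|z - y\|^{1 + \eta_0}
\]
for all $z$ in a neighborhood of $y$ and $w \in \partial h(z)$, for some $\eta_0 \in (0,1]$ and $C_0 > 0$. This is where I expect the main obstacle to lie. It follows by combining (a) a Łojasiewicz-type subgradient inequality for semialgebraic locally Lipschitz functions (e.g., from the Kurdyka–Łojasiewicz calculus), which yields a quantitative growth bound and a subgradient-modulus bound near the isolated minimum, with (b) an interpolation argument reducing the mixed quantity $h(z) - h(y) + \langle w, y - z\rangle$ to a comparison between a function value and a linear lower bound. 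In effect, since $y$ is an isolated minimum, both $|h(z) - h(y)|$ and $\|w\| \cdot \|z - y\|$ are controlled by a polynomial Łojasiewicz exponent in $\|z - y\|$ strictly larger than $1$. I would cite or appeal to the corresponding result from \cite{DDJ21} (whose calculus is being extended here) if it is available, otherwise assemble it from the standard semialgebraic toolkit.

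Second, given that base case, I would carry out the composition argument. Fix $x \in B_\delta(\bar x)$ and $v = \nabla F(x)^\T w$ with $w \in \partial h(F(x))$, and fix $\tilde y \in \cX_\ast \cap B_{2\delta}(\bar x)$, so that $F(\tilde y) = y$. By $C^2$ smoothness of $F$ near $\bar x$, there is a constant $C_1$ with
\[
  \|F(\tilde y) - F(x) - \nabla F(x)(\tilde y - x)\| \leq C_1 \|\tilde y - x\|^2.
\]
Writing $F(\tilde y) = y$ and using that $\partial h$ is locally bounded (since $h$ is locally Lipschitz, so $\|w\| \leq L_h$ near $y = F(\bar x)$), I get
\[
  \langle v, \tilde y - x\rangle = \langle w, \nabla F(x)(\tilde y - x)\rangle = \langle w, y - F(x)\rangle + r(x,\tilde y),
\]
with $|r(x, \tilde y)| \leq L_h C_1 \|\tilde y - x\|^2$. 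Hence
\[
  f(x) + \langle v, \tilde y - x\rangle - f^\ast = \bigl(h(F(x)) + \langle w, y - F(x)\rangle - h(y)\bigr) + r(x, \tilde y).
\]

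Third, I would apply the base case to bound the first bracket by $C_0 \|F(x) - y\|^{1+\eta_0}$. Since $F$ is $C^2$, hence locally Lipschitz with some constant $L_F$ near $\bar x$, and $y = F(\tilde y)$, this bracket is at most $C_0 L_F^{1+\eta_0} \|\tilde y - x\|^{1+\eta_0}$. Setting $\eta := \min(\eta_0, 1)$ and combining with the remainder $r(x,\tilde y) = O(\|\tilde y - x\|^2)$ gives the desired estimate
\[
  |f(x) + \langle v, \tilde y - x\rangle - f^\ast| \leq \constb \|\tilde y - x\|^{1+\eta},
\]
after possibly shrinking $\delta$ so that both the base-case neighborhood and the Taylor neighborhood are in force. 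Local boundedness and nonempty-valuedness of $\subg = \nabla F(\cdot)^\T \partial h(F(\cdot))$ follow from the corresponding properties of $\partial h$ and continuity of $\nabla F$. The main obstacle, as noted, is the base case; the rest is a clean first-order expansion.
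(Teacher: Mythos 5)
Your proposal is correct and is essentially the paper's argument: the paper invokes its chain rule for $(b)$-regularity (Corollary~\ref{cor:semib}) with $F_2 = h$ along $\cY_2 = \{y\}$ — where Lemma~\ref{lem:basicexamples}, item~\ref{lem:basicexamples:item:semi} supplies exactly your ``base case'' — and $F_1 = F$, $G_1 = \nabla F$, along $\cY_1 = F^{-1}(y) = \cX_\ast$, and the proof of that chain rule is precisely your first-order expansion plus triangle inequality. One small correction: the base case does not rely on $y$ being an isolated minimum of $h$ — semialgebraicity of a locally Lipschitz $h$ alone yields $(b)$-regularity along any singleton set (\cite{BDL09,DD21}), and the isolated-minimum hypothesis serves the separate purpose of making $\cX_\ast = F^{-1}(y)$ the locally correct solution set for $f = h\circ F$.
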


A second class of examples arises from root finding problems.
\begin{proposition}\label{prop:rootfinding}
Consider a $C^2$ smooth mapping $F_1 \colon \RR^{d_1} \rightarrow \RR^{d_2}$ and a locally Lipschitz semialgebraic mapping $F_2 \colon \RR^{d_2} \rightarrow \RR^{d_3}$. Define $F := F_2 \circ F_1$ and the set $\cX_\ast := F^{-1}(0)$. Then for any $\bar x \in \cX_\ast$ at which $F_2(\bar x)$ is an isolated zero of $F_1$, the function  
$$
f(x) = \|F(x)\| \qquad \text{for all $x \in \RR^d$},
$$
satisfies Assumption~\ref{item:assumption:main:b} along $\cX_\ast$ at $\bar x$ with mapping $\subg$ defined by the formal chain rule:
$$
\subg(x) = \begin{cases}
\nabla F_1(x)^\T \partial F_2(x)^\T\frac{F(x)}{\|F(x)\|} &\text{$F(x) \neq 0$;}  \\
\nabla F_1(x)^\T \partial F_2(x)^\T  \closedball & \text{otherwise;}
\end{cases}\qquad \text{for all $x \in \RR^d$},
$$
where $\partial F_2(x)^\T$ denotes the set of transposed elements of the Clarke Jacobian of $F_2$ at $x$.
\end{proposition}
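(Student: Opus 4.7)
My strategy is to reduce Proposition~\ref{prop:rootfinding} to Proposition~\ref{prop:basicsemialgebraic} combined with a scalar semismoothness estimate for $F_2$. Set $h(z) := \|F_2(z)\|$; since $F_2$ is semialgebraic and locally Lipschitz and $\|\cdot\|$ is semialgebraic, $h$ is a locally Lipschitz semialgebraic function with $h \geq 0$. The hypothesis that $\bar z := F_1(\bar x)$ is an isolated zero of $F_2$ translates to: $\bar z$ is an isolated minimum of $h$. Proposition~\ref{prop:basicsemialgebraic} applied to $f = h \circ F_1$ thus already delivers $(b)$-regularity of $f$ along $\cX_\ast = F_1^{-1}(\bar z)$ at $\bar x$, but only with the smaller mapping $\tilde{\subg}(x) := \nabla F_1(x)^\T \partial h(F_1(x))$ coming from the Clarke chain rule, which is in general a proper subset of the ``formal chain rule'' mapping $\subg$ advertised in the statement.

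To upgrade to $\subg$, I would repeat the proof of Proposition~\ref{prop:basicsemialgebraic} specialized to this $h$. Fix $v = \nabla F_1(x)^\T G^\T w \in \subg(x)$ with $G \in \partial F_2(F_1(x))$ and $w$ equal to $F(x)/\|F(x)\|$ (or, when $F(x) = 0$, any element of $\closedball$), and fix $y \in \cX_\ast \cap B_{2\delta}(\bar x)$. Writing $z := F_1(x)$, for $\delta$ sufficiently small the isolation of $\bar z$ together with Lipschitzness of $F_1$ guarantees $F_1(y) = \bar z$, so a second-order Taylor expansion of $F_1$ yields $\nabla F_1(x)(y - x) = (\bar z - z) + r(x,y)$ with $\|r(x, y)\| \leq C_1\|y - x\|^2$. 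Since $\dotp{w, F_2(z)} = \|F_2(z)\| = f(x)$ in both cases $F(x) \neq 0$ and $F(x) = 0$, a direct manipulation gives
\begin{align*}
f(x) + \dotp{v, y - x} = \dotp{w,\, F_2(z) - G(z - \bar z)} + O(\|y - x\|^2),
\end{align*}
where the hidden constant depends on $C_1$ and a local Lipschitz bound on $F_2$ controlling $\|G\|$. Using $\|w\| \leq 1$ reduces the desired estimate to bounding $\|F_2(z) - G(z - \bar z)\|$ by $C\|z - \bar z\|^{1+\eta}$.

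The remaining step is a semismoothness estimate with polynomial rate for the semialgebraic mapping $F_2$ at its isolated zero $\bar z$, which I regard as the main technical obstacle. I would handle it componentwise: for any $G \in \partial F_2(z)$, the $i$th row $g_i$ of $G$ lies in $\partial F_2^i(z)$, a fact that follows directly from the definition of the Clarke Jacobian as the closed convex hull of limits of classical Jacobians whose rows are gradients of the components. Each scalar function $F_2^i$ is locally Lipschitz and semialgebraic with $F_2^i(\bar z) = 0$, so by the semismoothness-with-polynomial-rate property of semialgebraic functions (see, e.g., \cite{BDL09} and \cite{DDJ21}) there exist constants $C_i, \eta_i > 0$ satisfying $|F_2^i(z) - \dotp{g_i, z - \bar z}| \leq C_i\|z - \bar z\|^{1+\eta_i}$ for all $z$ near $\bar z$ and all $g_i \in \partial F_2^i(z)$. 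Setting $\eta := \min_i \eta_i$ and aggregating across components yields $\|F_2(z) - G(z - \bar z)\| \leq C\|z - \bar z\|^{1+\eta}$; combining this with the Lipschitz bound $\|z - \bar z\| \leq L_{F_1}\|y - x\|$ and absorbing the $O(\|y-x\|^2)$ remainder (shrinking $\eta$ to $\min(\eta, 1)$ if necessary) completes the proof.
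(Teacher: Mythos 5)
Your proof is correct, but it takes a genuinely different route from the paper. The paper's proof is two lines: it invokes Corollary~\ref{cor:semib} (the chain rule for $(b)$-regularity with a semialgebraic outer mapping) once for the composition $F_2 \circ F_1$ and once more for $\|\cdot\| \circ F$, leaning on Lemma~\ref{lem:basicexamples} to certify that the semialgebraic pairs $(F_2,\partial F_2)$ and $(\|\cdot\|,\partial\|\cdot\|)$ are $(b)$-regular along the relevant singletons. You instead unwind the composition by hand: you Taylor-expand $F_1$, observe that $\dotp{w, F_2(z)} = \|F_2(z)\|$ in both branches of the definition of $\subg$, and reduce the whole estimate to the matrix semismoothness bound $\|F_2(z) - G(z-\bar z)\| \leq C\|z - \bar z\|^{1+\eta}$, which you then derive componentwise from the scalar result of \cite{BDL09} via the standard fact that rows of the Clarke Jacobian lie in the Clarke subdifferentials of the component functions. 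Each step checks out. What your approach buys is transparency on two points the paper's proof leaves implicit: (i) the ``formal chain rule'' mapping $\subg$ can be strictly larger than the Clarke subdifferential of the composition $\|F_2(F_1(\cdot))\|$, so one cannot simply cite Proposition~\ref{prop:basicsemialgebraic} with $h = \|F_2(\cdot)\|$ -- you flag this and work around it directly, whereas the paper's chain-rule corollary is engineered to absorb it; and (ii) the isolation hypothesis is what lets one identify $\cX_\ast$ with $F_1^{-1}(\bar z)$ locally, which is needed because the chain rule naturally produces $(b)$-regularity along $F_1^{-1}(F_1(\bar x))$ rather than along $F^{-1}(0)$. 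What the paper's route buys is brevity and reusability: the same two-line pattern handles Proposition~\ref{prop:basicsemialgebraic} and the corollaries for root-finding and feasibility without redoing any estimates. You could also have shortened your final step by citing Lemma~\ref{lem:basicexamples}, Item~\ref{lem:basicexamples:item:semi}, directly for the mapping $F_2$ rather than rederiving the matrix estimate componentwise, but the componentwise derivation is valid.
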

Finally we present a class arising in feasibility problems. 
\begin{proposition}\label{prop:semialgebraicfeasibility}
Consider a collection of semialgebraic sets $\cX_i \subseteq \RR^d$ indexed by a finite set $I$.
Suppose that $\bigcap_{i\in I}\cX_i = \{\bar x\}$ and define $\cX_\ast := \{\bar x\}$.
Then for any $\bar x \in \cX_\ast$, the function 
$$
f(x) = \sum_{i\in I}\dist(x, \cX_i)\qquad \text{for all $x \in \RR^d$},
$$
satisfies Assumption~\ref{item:assumption:main:b} along $\cX_\ast$ at $\bar x$ with mapping $\subg$ defined by the formal sum rule:
$$
\subg(x) = \sum_{i \in I} \partial \dist(x, \cX_i) \qquad \text{for all $x \in \RR^d$.}
$$
\end{proposition}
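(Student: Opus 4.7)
The plan is to verify the $(b)$-regularity estimate~\eqref{eq:bestimate} for the proposed mapping $\subg$ by splitting the sum, analyzing a single distance function via formula~\eqref{eq:distfunctionsub}, and invoking a Hölder-type normal cone estimate for semialgebraic sets.

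First I would observe that $f(\bar x) = 0 = f^\ast$ since $\bar x \in \cX_i$ for every $i$, and that $\cX_\ast \cap B_{2\delta}(\bar x) = \{\bar x\}$, so only the choice $y = \bar x$ needs to be considered. Any $v \in \subg(x)$ decomposes as $v = \sum_i v_i$ with $v_i \in \partial \dist(x, \cX_i)$, and the triangle inequality gives
\begin{align*}
|f(x) + \dotp{v, \bar x - x} - f^\ast| \leq \sum_{i \in I} \left|\dist(x, \cX_i) + \dotp{v_i, \bar x - x}\right|.
\end{align*}
It therefore suffices to show that for each $i$ there exist constants $C_i, \eta_i > 0$ such that $|\dist(x, \cX_i) + \dotp{v_i, \bar x - x}| \leq C_i \|\bar x - x\|^{1+\eta_i}$ for every $x$ near $\bar x$ and every $v_i \in \partial \dist(x, \cX_i)$.

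Next I would fix $i$ and reduce to a normal-cone estimate via~\eqref{eq:distfunctionsub}. If $x \in \cX_i$, then $\dist(x, \cX_i) = 0$ and $v_i \in N_{\cX_i}(x) \cap \closedball$, so the inequality becomes $|\dotp{v_i, \bar x - x}| \leq C_i \|\bar x - x\|^{1+\eta_i}$. If $x \notin \cX_i$, write $v_i = \sum_j \lambda_j n_j$ as a convex combination of unit vectors $n_j := (x - \hat x_j)/\|x - \hat x_j\| \in N_{\cX_i}(\hat x_j)$, with $\hat x_j \in P_{\cX_i}(x)$. A direct computation using $\dotp{n_j, x - \hat x_j} = \|x - \hat x_j\| = \dist(x, \cX_i)$ gives
\begin{align*}
\dist(x, \cX_i) + \dotp{v_i, \bar x - x} = \sum_j \lambda_j \dotp{n_j, \bar x - \hat x_j},
\end{align*}
so in both cases the task reduces to bounding $|\dotp{n, \bar x - z}|$ for $z \in \cX_i$ close to $\bar x$ and unit $n$ in the Clarke normal cone $N_{\cX_i}(z)$.

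The key step is then the Hölder-type normal-cone estimate: for a semialgebraic set $\cX_i$ containing $\bar x$, there exist $C_i, \eta_i > 0$ such that $|\dotp{n, \bar x - z}| \leq C_i \|\bar x - z\|^{1+\eta_i}$ for every $z \in \cX_i$ near $\bar x$ and every unit $n \in N_{\cX_i}(z)$. This is a consequence of Whitney/Verdier regularity of semialgebraic stratifications and is precisely the $(b)$-regularity of the distance function $\dist(\cdot, \cX_i)$ already developed in~\cite{DDJ21}; I plan to cite it directly and treat it as the main obstacle of the proof, with everything else being elementary bookkeeping around the sum rule for $\subg$ and the subdifferential formula~\eqref{eq:distfunctionsub}. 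Once in hand, the bound $\|\bar x - \hat x_j\| \leq \|\bar x - x\| + \dist(x, \cX_i) \leq 2\|\bar x - x\|$ (valid since $\bar x \in \cX_i$) converts the estimate into the required power of $\|\bar x - x\|$; setting $\eta = \min_i \eta_i$ and $\constb = \sum_i C_i \cdot 2^{1+\eta_i}$ (and shrinking $\delta$ if necessary to absorb differences in exponents) completes the proof.
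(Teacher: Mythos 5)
Your proposal is correct, and its overall structure matches the paper's: split the sum over $i$, establish $(b)$-regularity of each $\dist(\cdot,\cX_i)$ along the singleton $\{\bar x\}$, and combine. The difference is in how the per-set step is obtained. The paper's proof is a two-line citation: each pair $(\dist(\cdot,\cX_i),\partial\dist(\cdot,\cX_i))$ is $(b)$-regular along $\{\bar x\}$ by Item~3 of Lemma~\ref{lem:basicexamples} (the generic fact that a semialgebraic mapping with a semialgebraic conservative field is $(b)$-regular along singletons, from \cite{DD21,BDL09}), and the conclusion follows from the sum rule, Corollary~\ref{cor:bregsum}. You instead reprove both ingredients by hand: the triangle-inequality splitting is exactly the content of the sum rule, and your reduction via \eqref{eq:distfunctionsub} to the H\"older normal-cone estimate $|\dotp{n,\bar x - z}|\leq C_i\|\bar x - z\|^{1+\eta_i}$ is precisely the ``if'' direction of Lemma~\ref{lem:bregdistance}, whose semialgebraic case (Item~\ref{lem:bregdistance:semialgebraic}) is the external fact you cite. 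Your identity $\dist(x,\cX_i)+\dotp{v_i,\bar x - x}=\sum_j\lambda_j\dotp{n_j,\bar x-\hat x_j}$ and the bound $\|\bar x - \hat x_j\|\leq 2\|\bar x - x\|$ are correct, and taking $\eta=\min_i\eta_i$ with a shrunken $\delta$ is the right bookkeeping. What your route buys is transparency: it makes explicit that the only nontrivial input is the semialgebraic normal-cone estimate, at the cost of redoing arguments the paper has already packaged as Lemma~\ref{lem:bregdistance} and Corollary~\ref{cor:bregsum}. Both routes lean on the same underlying stratification result, so there is no gap.
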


To close this section, we mention that the sharp growth property~\ref{item:assumption:main:sharpness} is well-studied in the settings of these propositions. 
For example, the setting of Proposition~\ref{prop:basicsemialgebraic} arises in  low-rank matrix estimation problems~\cite{CCD+21}, where regularity property~\ref{item:assumption:main:sharpness} is a consequence of the \emph{restricted isometry property}~\cite{candes2005decoding} of the ``measurement operator."
Next, for $f$ defined in Proposition~\ref{prop:rootfinding}, regularity property~\ref{item:assumption:main:sharpness} is simply the classical \emph{metric subregularity assumption}. This is a weak regularity property known to hold in many circumstances~\cite{ioffe2017variational,10.5555/3226650.3226819}.
Finally, for $f$ defined in Proposition~\ref{prop:semialgebraicfeasibility}, regularity property~\ref{item:assumption:main:sharpness} is simply the classical \emph{linear regularity assumption}, which is known to ensure local linear convergence of the alternating projection method for closed sets~\cite[Theorem 3.2.3]{drusvyatskiy2013slope}. The property is automatic, for example, for intersections of convex polyhedral sets (see~\cite[Fact 5.8]{bauschke2015linear}), and moreover holds for ``generic perturbations" of semialgebraic sets~\cite[Theorem 7.1]{drusvyatskiy2015transversality}. 
We present further analysis of these settings in Section~\ref{sec:discussion}.

\subsection{$\mathtt{PolyakSGM}$: local linear convergence}\label{sec:polyak}
We now turn to the first method of this work, dubbed $\mathtt{PolyakSGM}$, which is shown in Algorithm~\ref{alg:polyak-sgm-method}.
This method will be a key subroutine in the locally superlinearly convergent algorithm developed in~\cref{sec:coupling}. 
\begin{algorithm}[H]
  \caption{$\mathtt{PolyakSGM}(z_0, \epsilon)$}
  \begin{algorithmic}
    \Repeat \, for $\inner = 0, 1, \dots$
      \State Choose $v_{\inner} \in \subg(z_{\inner})$
      \If{$v_{\inner} = 0$}
        \State \Return $z_{\inner}$
      \EndIf
      \State $z_{\inner+1} := z_{\inner} - \dfrac{f(z_{\inner}) - f^\ast}{\norm{v_{\inner}}^2} v_{\inner}$
    \Until{$f(z_{{\inner}+1}) - f^\ast \leq \epsilon$}
    \State \Return $z_{{\inner}+1}$
  \end{algorithmic}
  \label{alg:polyak-sgm-method}
\end{algorithm}
The following is our main convergence theorem. We place the proof in Section~\ref{sec:thm:polyak}.
We note that the argument mirrors the proof of the analogous result in the convex and weakly convex
settings~\cite{Polyak69,DDMP18}.  
\begin{theorem}\label{thm:polyak}
Suppose assumptions \ref{item:assumption:main:sharpness} and ~\ref{item:assumption:main:b} hold at $\bar x$.
Let $L$ be an upper bound for the maximal norm element of $\subg(B_{\delta}(\bar x))$.
Define
$$
\kappa := \frac{L}{\mu} \quad
\text{and} \quad \rho :=  \sqrt{1 - (2\kappa)^{-2}}.
$$
Fix an initial point $x \in \RR^d$ satisfying the bounds:
\[
  \norm{x - \bar{x}} < \frac{(1-\rho)\delta}{2}
  \quad \text{and} \quad
  \dist(x, \cX_{\ast}) \leq \left(\frac{\mu}{4\constb}\right)^{1 / \eta}.
\]
Then for all $\epsilon > 0$, $\mathtt{PolyakSGM}(x, \epsilon)$ successfully terminates with at most
$$
\ceil{8\kappa^2 \log\left(\frac{\kappa (f(x) - f^{\ast})}{\epsilon}\right)}
$$
evaluations of $\subg$.
\end{theorem}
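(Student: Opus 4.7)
The plan is to mirror the classical Polyak-step analysis for sharp (weakly) convex functions~\cite{Polyak69,DDMP18}, replacing the usual convex subgradient inequality with the $(b)$-regularity bound~\eqref{eq:bestimate} and absorbing the resulting $O(\|\cdot\|^{1+\eta})$ error into the main progress term thanks to the quantitative initialization $d_0 := \dist(x,\cX_\ast)\le(\mu/(4\constb))^{1/\eta}$. At each iterate $z_{\inner}$ I would fix a nearest point $\bar y_{\inner}\in P_{\cX_\ast}(z_{\inner})$ and write $d_{\inner}:=\|z_{\inner}-\bar y_{\inner}\|=\dist(z_{\inner},\cX_\ast)$.

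Expanding the Polyak update gives
\[
  \|z_{\inner+1}-\bar y_{\inner}\|^2 \;=\; d_{\inner}^2 - 2\,\tfrac{f(z_{\inner})-f^\ast}{\|v_{\inner}\|^2}\langle v_{\inner},\,z_{\inner}-\bar y_{\inner}\rangle + \tfrac{(f(z_{\inner})-f^\ast)^2}{\|v_{\inner}\|^2}.
\]
Applying Assumption~\ref{item:assumption:main:b} at $x=z_{\inner}$ and $y=\bar y_{\inner}$ (legitimate once one verifies $z_{\inner}\in B_{\delta}(\bar x)$, which forces $\bar y_{\inner}\in B_{2\delta}(\bar x)$ since $\|\bar y_{\inner}-\bar x\|\le 2\|z_{\inner}-\bar x\|$) yields the inner-product lower bound $\langle v_{\inner},z_{\inner}-\bar y_{\inner}\rangle\ge (f(z_{\inner})-f^\ast) - \constb d_{\inner}^{1+\eta}$. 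Combining this with $\|v_{\inner}\|\le L$ and the sharpness bound $f(z_{\inner})-f^\ast\ge \mu d_{\inner}$, a short calculation reduces the display to
\[
  d_{\inner+1}^2 \;\le\; d_{\inner}^2 - \frac{(f(z_{\inner})-f^\ast)\bigl(\mu d_{\inner} - 2\constb d_{\inner}^{1+\eta}\bigr)}{L^2}.
\]
As long as $d_{\inner}\le(\mu/(4\constb))^{1/\eta}$, the parenthesized factor is at least $(\mu/2) d_{\inner}$, so $d_{\inner+1}^2\le(1-\mu^2/(2L^2))d_{\inner}^2\le\rho^2 d_{\inner}^2$ and therefore $d_{\inner+1}\le\rho d_{\inner}$.

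The main obstacle is closing the induction, i.e.\ ensuring that every iterate stays in $B_{\delta}(\bar x)$ and that $d_{\inner}$ remains below the threshold $(\mu/(4\constb))^{1/\eta}$. The second property is immediate from the monotonicity $d_{\inner}\le d_0$. For the first I would telescope the triangle bound $\|z_{\inner+1}-\bar x\|\le\rho d_{\inner}+d_{\inner}+\|z_{\inner}-\bar x\|$, which summed as a geometric series gives $\|z_{\inner}-\bar x\|\le\|z_0-\bar x\|+(1+\rho)d_0/(1-\rho)$; plugging in the initialization conditions $\|z_0-\bar x\|<(1-\rho)\delta/2$ and $d_0\le\|z_0-\bar x\|$ bounds the right-hand side by $\delta$, closing the induction.

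For the complexity count, the upper estimate $f(z_{\inner})-f^\ast\le L d_{\inner}+\constb d_{\inner}^{1+\eta}\le(5L/4) d_{\inner}$ (using $\constb d_{\inner}^{\eta}\le\mu/4\le L/4$) combined with $d_{\inner}\le\rho^{\inner}d_0$ and the sharpness lower bound $f(x)-f^\ast\ge\mu d_0$ gives $f(z_{\inner+1})-f^\ast\lesssim\kappa\rho^{\inner+1}(f(x)-f^\ast)$; requiring this to be at most $\epsilon$ and using $\log(1/\rho)\ge 1/(8\kappa^2)$ then produces the stated iteration count. The edge case $v_{\inner}=0$ is handled separately: sharpness together with $(b)$-regularity applied at the nearest point forces $\mu d_{\inner}\le\constb d_{\inner}^{1+\eta}$, which by the initialization threshold implies $d_{\inner}=0$, i.e.\ $z_{\inner}\in\cX_\ast$, so returning $z_{\inner}$ is correct.
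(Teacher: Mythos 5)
Your proposal is correct and follows essentially the same route as the paper: expand the Polyak update around a nearest point $\hat x\in P_{\cX_\ast}(z_\inner)$, use $(b)$-regularity to bound the cross term (the paper isolates this as the ``aiming'' inequality, Lemma~\ref{lemma:aiming-inequality}), absorb the $O(d_\inner^{1+\eta})$ error into the $\mu d_\inner$ progress via the threshold on $\dist(x,\cX_\ast)$, and close the induction by telescoping the triangle inequality to keep iterates in $B_\delta(\bar x)$. The only structural difference is that the paper packages the one-step contraction as Lemma~\ref{lemma:one-step-improvement}, verifying that the Polyak step is an algorithmic mapping in the sense of~\ref{item:assumption:main:fallback}, and then invokes the general iterated-contraction Lemma~\ref{lemma:fallback-stay-in-ball}, whereas you run the induction directly; the content is the same.
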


We note that it is possible to prove a similar theorem when the $(b)$-regularity estimate~\eqref{eq:bestimate} is replaced by the following weaker condition: for some $\gamma < \mu/2$, we have 
\begin{align}
f(x) + \dotp{v, y - x} - f^\ast \leq \gamma \|y - x\| 
\end{align}
for all $x \in B_{\delta}(\bar x)$ and $y \in \cX_\ast \cap B_{2\delta}(\bar x)$. 
We do not pursue this result since the stronger $(b)$-regularity estimate~\eqref{eq:bestimate} will be crucial in what follows.

\subsection{$\bundle$: superlinear improvement}\label{sec:subsuperlinear}

In this section, we formally describe the procedure outlined in Equation~\eqref{eq:polyakreform3} of the introduction. Specifically, we will show that the $\bundle$ procedure shown in Algorithm~\ref{alg:build-bundle-method} locally results in superlinear improvement. Note that pseudoinverse computations of Algorithm~\ref{alg:build-bundle-method} are identical to the subproblems in~\eqref{eq:polyakreform3}, but for ease of implementation, we have written the closed-form solution. 

\begin{algorithm}[H]
  \caption{$\bundle(x, \tau)$}
  \begin{algorithmic}
    \State $y_0 := x$; $v_0 \in \subg(y_0)$; $A_1 := v_0^\T$.
    \For{$\inner = 1, \dots, d$}
      \State $y_{\inner} = y_{0} - A_{\inner}^{\dag}
        \bmx{f(y_0) - f^\ast + \dotp{v_0, y_0 - y_0} \\
             \vdots \\
             f(y_{\inner-1}) - f^\ast + \dotp{v_{\inner-1}, y_0 - y_{\inner-1}}}$
      \State $A_{\inner+1} := \bmx{A_i \\ v_{\inner}^{\T}}$ for arbitrary $v_{\inner} \in \subg(y_{\inner})$.
    \EndFor
    \State \Return $y_{s}$, where $s = \argmin_{\inner: \norm{y_{\inner} - y_0} \le \tau f(y_0)} f(y_{\inner})$
  \end{algorithmic}
  \label{alg:build-bundle-method}
\end{algorithm}
Now we turn to our main theorem, which states that the procedure $\bundle$ locally results in superlinear improvement. The proof appears in Section~\ref{proof:corollary:function-value-reduction}.
\begin{theorem}[Superlinear Improvement]
\label[theorem]{corollary:function-value-reduction}
  Suppose Assumptions~\ref{item:assumption:main:sharpness} and~\ref{item:assumption:main:b} hold at $\bar x$. 
  Let $L$ be an upper bound for the maximal norm element of $\subg(B_{\delta}(\bar x))$
  and a 
  Lipschitz constant of $f$ on $B_{\delta}(\bar x)$.
  Then there exists a constant $\constsuper > 0$ such that for all scalars $\tau > (3/\mu)$ and points $x \in \RR^d$ with
  \[
    \norm{x - \bar{x}} < \frac{\delta}{4}, \quad \text{ and } \quad
    \dist(x, \cX_{\ast}) \leq \min\set{\left(\frac{\mu}{2\constb}\right)^{1/\eta},
      \left(\frac{\mu^{1-\eta}}{L \constsuper}\right)^{1 / \eta}
    },
  \]
  the point $\tilde x = \bundle(x, \tau)$ satisfies
  \begin{equation}
    \label{eq:bigcprime}
    f(\tilde{x}) \leq \constsuper f(x)^{1+\eta}.
  \end{equation}
\end{theorem}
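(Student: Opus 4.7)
The argument is organized around the ``lemma of alternatives'' hinted at in the introduction: at each inner round $\inner$ of $\bundle$, either the current iterate $y_\inner$ already super-linearly approximates a fixed projection $\bar y \in P_{\cX_\ast}(x)$, or the freshly sampled subgradient $v_\inner$ strictly enlarges $\mathrm{span}(v_0,\ldots,v_{\inner-1})$. The second alternative can occur at most $d$ times, so within the $d$ inner iterations of the algorithm there must exist an index $\inner^\star \le d$ whose iterate satisfies the target estimate $\|\bar y - y_{\inner^\star}\| \lesssim \|\bar y - y_0\|^{1+\eta}$. The $\argmin$ in the return step of Algorithm~\ref{alg:build-bundle-method} then outputs a point of at most this function value, and Lipschitz continuity converts this distance bound into the claimed function-value bound \eqref{eq:bigcprime}.

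\textbf{Inductive setup and key identity.}
I fix $\bar y \in P_{\cX_\ast}(x)$; since $\|x-\bar x\| < \delta/4$ we have $\bar y \in B_{2\delta}(\bar x)$, and a running triangle-inequality argument (fuelled by the min-norm property of the pseudoinverse together with the distance hypothesis on $\dist(x,\cX_\ast)$) keeps every $y_\inner$ produced by the algorithm inside $B_{\delta}(\bar x)$, so that $(b)$-regularity, sharpness and the bounds by $L$ all remain in force. The induction maintains that $v_0,\ldots,v_{\inner-1}$ are linearly independent, which makes $A_\inner$ full row rank; the system defining $y_\inner$ is then consistent and the pseudoinverse formula produces an exact solution, yielding
\begin{align*}
\langle v_i, y_\inner - y_i \rangle \;=\; f^\ast - f(y_i), \qquad i=0,\ldots,\inner-1.
\end{align*}
Subtracting each of these identities from the $(b)$-regularity estimate $|f(y_i)+\langle v_i,\bar y - y_i\rangle - f^\ast| \le \constb \|\bar y - y_i\|^{1+\eta}$ gives the crucial inequality
\begin{align*}
\bigl|\langle v_i, \bar y - y_\inner\rangle\bigr| \;\le\; \constb \|\bar y - y_i\|^{1+\eta}, \qquad i=0,\ldots,\inner-1,
\end{align*}
expressing that $\bar y - y_\inner$ is nearly orthogonal to every past subgradient, with a residual of $(1+\eta)$-order.

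\textbf{The alternative and the conclusion.}
Applying $(b)$-regularity once more at $y_\inner$ itself gives $f(y_\inner) - f^\ast = -\langle v_\inner, \bar y - y_\inner\rangle + O(\|\bar y - y_\inner\|^{1+\eta})$. If $v_\inner$ lies in $\mathrm{span}(v_0,\ldots,v_{\inner-1})$, the displayed estimate above forces $\langle v_\inner, \bar y - y_\inner\rangle$ to be super-linearly small; sharpness \ref{item:assumption:main:sharpness} then upgrades the resulting bound on $f(y_\inner)-f^\ast$ to $\dist(y_\inner, \cX_\ast) \le C' \|\bar y - y_0\|^{1+\eta}$. Otherwise $v_0,\ldots,v_\inner$ are linearly independent, the rank of $A_{\inner+1}$ is $\inner+1$, and the induction advances. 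Once super-linear approximation is reached at some $\inner^\star \le d$, sharpness at $x$ gives $\|\bar y - y_0\| \le (f(x)-f^\ast)/\mu$, and the Lipschitz bound supplies $f(y_{\inner^\star}) - f^\ast \le L\,\dist(y_{\inner^\star},\cX_\ast) \le LC'\bigl((f(x)-f^\ast)/\mu\bigr)^{1+\eta}$; bundling the constants into $\constsuper$ produces \eqref{eq:bigcprime}. The hypothesis $\tau > 3/\mu$ provides exactly the slack needed to verify $\|y_{\inner^\star} - y_0\| \le \tau f(y_0)$ via a triangle inequality, so the super-linearly good iterate is indeed admissible in the $\argmin$.

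\textbf{Main obstacle.}
The delicate step is making the alternative genuinely quantitative: when $v_\inner = \sum_i \alpha_i v_i$ lies in $\mathrm{span}(v_0,\ldots,v_{\inner-1})$, the expansion coefficients $\alpha_i$ depend on the conditioning of the Gram matrix of past subgradients, which is uncontrolled a priori. Handling this cleanly requires decomposing $\bar y - y_\inner$ into its components in $\mathrm{range}(A_\inner^\T)$ and $\ker(A_\inner)$, identifying the first with $A_\inner^\dag$ applied to a $(b)$-regularity residual, and controlling the second by the shrinking projection $P_{\ker(A_\inner)}(\bar y - y_0)$ as new subgradients are appended. Propagating the conditioning-dependent constants through the induction and absorbing all resulting $d$-dependent factors into a single $\constsuper = \constsuper(L,\mu,\constb,\eta,d)$ is the technical heart of the argument, and the $\tau$-safeguard in the algorithm's return step is exactly what prevents any remaining pseudoinverse blow-up (at indices $\inner$ where the rank-growth alternative failed due to a nearly-dependent $v_\inner$) from polluting the output.
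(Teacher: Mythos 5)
Your skeleton matches the paper's: an error decomposition $y_\inner - \hat y_0 = P_{\ker(A_\inner)}(y_0-\hat y_0) - A_\inner^{\dag}[f(y_j)+\langle v_j,\hat y_0 - y_j\rangle]_j$, the observation that consistency of the linear system plus $(b)$-regularity makes $\hat y_0 - y_\inner$ nearly orthogonal to all past subgradients with $(1+\eta)$-order residuals, a rank-growth argument that terminates in at most $d$ rounds, the $\tau$-check via $\|y_{\inner^\star}-y_0\|\le 3\|y_0-\hat y_0\|\le (3/\mu)f(y_0)$, and the final Lipschitz-plus-sharpness conversion to function values. All of that is sound and is exactly how the paper proceeds.

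However, there is a genuine gap at the step you yourself flag as the ``main obstacle,'' and the dichotomy you propose does not close it. Your alternative is: either $v_\inner\in\mathrm{span}(v_0,\dots,v_{\inner-1})$ (superlinear improvement) or the $v_j$ are linearly independent (rank grows). This leaves a no man's land: $v_\inner$ can be linearly independent of the previous subgradients yet make an angle $\epsilon$ with their span, in which case the rank grows but $\sigma_{\min}(A_{\inner+1})=O(\epsilon)$, so $\opnorm{A_{\inner+1}^{\dag}}=\Omega(1/\epsilon)$ is uncontrolled and the term $A_{\inner+1}^{\dag}[\cdots]$ in the decomposition is useless; and no superlinear improvement is available either, since $v_\inner$ is not in the span. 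The $\tau$-safeguard does not rescue this --- in the paper it only certifies admissibility of the good iterate in the $\argmin$, not conditioning. The missing idea is to make the alternative quantitative through the aiming inequality (Lemma~\ref{lemma:aiming-inequality}): the inductive invariant the paper maintains is $\langle v_j, P_{\ker(A_j)}(y_j-\hat y_j)\rangle \ge \tfrac{\mu}{8}\|y_j-\hat y_j\|$, which in particular forces $\|P_{\ker(A_j)}(v_j)\|\ge \mu/8$ and hence, via a Davis--Kahan/Weyl argument (Proposition~\ref{proposition:build-bundle-sval-lower-bound}), the explicit bound $\opnorm{A_{\inner+1}^{\dag}}\le \max\{1,2/\mu\}(8\sqrt2 L/\mu)^{\inner}$ independent of the data. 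When this invariant fails, the aiming inequality $\langle v_{\inner+1},y_{\inner+1}-\hat y_{\inner+1}\rangle\ge\tfrac{\mu}{2}\|y_{\inner+1}-\hat y_{\inner+1}\|$ forces the mass of the inner product onto $\ker(A_{\inner+1})^{\perp}$, whose contribution is already known to be $O(\|y_0-\hat y_0\|^{1+\eta})$, yielding $\dist(y_{\inner+1},\cX_\ast)=O(\|y_0-\hat y_0\|^{1+\eta})$. That trichotomy (maintain a quantitative kernel component of $v_j$, or get superlinear improvement one of two ways) is the technical heart you describe but do not supply.

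One smaller point: your stated target $\|\bar y - y_{\inner^\star}\|\lesssim\|\bar y - y_0\|^{1+\eta}$ for the \emph{fixed} projection $\bar y=P_{\cX_\ast}(y_0)$ is too strong when $\cX_\ast$ is not a singleton; the iterates need only approach the set, not that particular point. The correct quantity (which you do use in your concluding paragraph) is $\dist(y_{\inner^\star},\cX_\ast)$, while $\|y_\inner-\hat y_0\|\le\|y_0-\hat y_0\|$ is kept only as a boundedness invariant.
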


We comment on two aspects of this theorem. First, we mention that the requirement that $\|\tilde x - y_0\| \leq \tau f(y_0)$ is not necessary for one step of superlinear improvement. However, in Section~\ref{sec:coupling} we apply $\bundle$ repeatedly and use this condition to ensure $\tilde x$ remains near $\bar x$. Second, upon checking the proof, the reader will find that the constant $\constsuper$ can be extremely large, yielding a small region of superlinear convergence: 
\[
  \constsuper = O\left(d\max\set{(L/\mu), L} (8\sqrt{2} L/\mu)^{d}\right).
\]
However, the numerical experiments in Section~\ref{sec:experiments} suggest that this bound may be an artifact of the proof technique.
Whether this constant can be improved is an intriguing open question.
In Section~\ref{sec:coupling} we develop a procedure that reaches the region of superlinear convergence from a more reasonable initial guess, using a reasonable number of evaluations of $\subg$. After it reaches this region, the method reverts to $\bundle$.

\begin{remark}
We mention that a naive implementation of $\bundle$ requires $O(d^4)$ operations.
In Section~\ref{sec:implementation}, we develop a strategy that reduces this cost to $O(d^3)$.
\end{remark}

\subsection{$\mathtt{SuperPolyak}$: $\bundle$ with a fallback algorithm}
\label[section]{sec:coupling}

In Section~\ref{sec:subsuperlinear}, we showed that the $\bundle(x, \tau)$ procedure results
in superlinear improvement if $\|x - \bar x\| \leq \delta/4$ and $\dist(x, \cX_\ast)$ is  small.
While the former condition is reasonable, the latter appears difficult to satisfy.
Thus, in this section, we develop a strategy for coupling the $\bundle$ procedure with a linearly convergent fallback algorithm, which rapidly approaches $\cX_\ast$ from a more reasonable initialization.

\subsubsection{Fallback algorithms}
The method $\mathtt{PolyakSGM}$ may always be used as a fallback method. 
However, an alternative fallback method may be preferable to $\mathtt{PolyakSGM}$. Two settings of interest arise from fixed-point and feasibility problems. 
\begin{example}[Fixed-point problems]\label{example:fixed-point}
\rm Suppose we seek a fixed-point $\bar x$ of a locally Lipschitz mapping $T \colon \RR^d \rightarrow \RR^d$. Then, under the conditions outlined in Proposition~\ref{prop:rootfinding}, one may apply $\mathtt{PolyakSGM}$ to the function $f(x) = \|x - Tx\|$. In place of~$\mathtt{PolyakSGM}$, one may instead use the classical fixed-point iteration~\cite{Banach22,Krasnoselskii55,Mann1953}, which repeats
$$z_{\inner+1} = T(z_{\inner}).$$ 
\end{example}

\begin{example}[Feasibility problems]\label{example:feasibility}
\rm Suppose we seek a point $\bar x$ in the intersection of two closed subsets $\cX_1$ and $ \cX_2$ of $\RR^d$. Then under the conditions outlined in Proposition~\ref{prop:semialgebraicfeasibility}, one may apply $\mathtt{PolyakSGM}$ to the function $f(x) = \dist(x, \cX_1) + \dist(x, \cX_2)$. In place of~$\mathtt{PolyakSGM}$, one may instead use the classical method of alternating projections~\cite{von}, which repeats 
$$
\tilde z_{\inner} \in P_{\cX_1}(z_i); \qquad z_{\inner+1} \in P_{\cX_2}(\tilde z_{\inner});
%z_{\inner + 1} \in P_{\cX_2}(P_{\cX_1}(z_{\inner})).
$$ 
\end{example}

 Although other fallback methods may be appropriate, we limit our study to algorithms which iterate algorithmic mappings of the following form: 
\begin{enumerate}[label=$\mathrm{(A\arabic*)}$]
  \setcounter{enumi}{2}
  \item \label{item:assumption:main:fallback} {\bf (Algorithmic mapping)}
    There exists radii $0 < \radius_2 \leq \radius_1$,  a contraction factor
    $\rho \in (0, 1)$, and a mapping
    $\cA \colon B_{\radius_1}(\bar x) \rightarrow \RR^d$  such that if $x \notin \cX_\ast$ satisfies
    \[
      \norm{x - \bar{x}} < \radius_1 \qquad \text{ and } \qquad 
      \dist(x, \cX_{\ast}) < \radius_2,
    \]
    then the following holds:
    \[
      \norm{\cA(x) - \hat x} \leq \rho \, \dist(x, \cX_\ast) \quad
      \text{ for all $\hat{x} \in P_{\cX_{\ast}}(x)$.}
    \]
\end{enumerate}

We call such mappings $\cA$ \emph{algorithmic mappings.} For example, we will later show in Lemma~\ref{lemma:one-step-improvement} that $\mathtt{PolyakSGM}$ is generated by iterating an algorithmic mapping.
In the context of Example~\ref{example:fixed-point}, the operator $\cA := T$ is an algorithmic mapping if it behaves like a contraction towards points in $\cX_\ast$. 
Finally, in the context of Example~\ref{example:feasibility}, \cite[Theorem 3.2.3]{drusvyatskiy2013slope} shows that any selection $\cA$ of the set-valued mapping $P_{\cX_2}\circ P_{\cX_1}$ is an algorithmic mapping provided the sets $\cX_1$ and $\cX_2$ intersect ``transversely" at $\bar x$, a property that implies sharp growth of $f$ (see~\cite{drusvyatskiy2015transversality} for discussion).
%Thus, algorithmic mappings generate well-defined iterates that eventually reach $\cX_\ast$.

Now consider the iterates $z_0 := \cA^{\circ k}(z_0)$ generated by repeatedly applying an algorithmic mapping, starting from some initial point $z_0$ that is sufficiently close to $\bar x$.
Then it is straightforward to check that the iterates $z^k$ linearly converge $\cX_\ast$; we provide a simple proof in Lemma~\ref{lemma:fallback-stay-in-ball} of Section~\ref{sec:proofofalgsresults}.
Thus, any such algorithmic mapping $\cA$ generates a well-defined algorithm $\mathtt{FallbackAlg}(\cA, z_0, \epsilon)$, which arises from simply iterating $\cA$ until the function gap is of size at most $\epsilon$ (see Algorithm~\ref{alg:fallback}). Such fallback methods play a key role in our main algorithm, which we now describe.

\begin{algorithm}
  \caption{$\mathtt{FallbackAlg}(\cA, z_0, \epsilon)$}
  \begin{algorithmic}
    \Repeat \, for $\inner = 0, 1, \dots$
      \State $z_{\inner+1} := \cA(z_{\inner})$
    \Until{$f(z_{\inner+1}) - f^{\ast} \leq \epsilon$}
    \State \Return $z_{\inner+1}$
  \end{algorithmic}
  \label{alg:fallback}
\end{algorithm}

\subsubsection{Algorithm and main convergence theorem}
We now have all the pieces to describe Algorithm~\ref{alg:bundle-newton-method}, which we dub $\mathtt{SuperPolyak}$. The method couples $ \bundle$ and $\mathtt{FallbackAlg}$. At each iteration it first attempts a superlinear step. If the step halves the function gap, the method updates the iterate. Otherwise, the method calls the fallback algorithm, which will halve the function gap.  
Key to the algorithm is the scalar $(3/2)^k$ in line~\ref{alg:threehalf}, which is eventually larger than $3 / \mu$: 
according to~\cref{corollary:function-value-reduction}, this ensures the $\bundle(x_{\outind}, (3/2)^{\outind})$ locally results in superlinear improvement.
Finally, we mention that one may adjust the performance of the algorithm by changing the factor $(3/2)^k$ or adjusting the constant $1/2$ in line~\ref{alg:fallbackstep}. 
We discuss these strategies in Section~\ref{sec:implementation} below. 

\begin{algorithm}
  \caption{$\mathtt{SuperPolyak}(\cA, x_0, \epsilon)$}
  \begin{algorithmic}[1]
    \For{$\outind = 0, 1, \dots$}
      \State $\tilde{x} := \bundle(x_{\outind}, (3/2)^{\outind})$ 
      \Comment{$\tilde{x}$ can be $\emptyset$} \label{alg:threehalf}
      \If{$\tilde{x} \neq \emptyset$ and $f(\tilde{x}) - f^\ast < \frac{1}{2} (f(x_{\outind}) - f^\ast)$}
        \label{op:progress}
        \State $x_{\outind+1} := \tilde{x}$ \Comment{$\bundle$ step successful}
      \Else
      \State $x_{\outind+1} := \mathtt{FallbackAlg}\left(\cA, x_{\outind}, \frac{1}{2} (f(x_{\outind}) - f^\ast)\right)$
        \Comment{Run until function gap halved}\label{alg:fallbackstep}
      \EndIf
      \If{$f(x_{\outind+1}) - f^\ast \le \epsilon$}
        \State \Return $x_{\outind+1}$
      \EndIf
    \EndFor
  \end{algorithmic}
  \label{alg:bundle-newton-method}
\end{algorithm}

The following theorem shows Algorithm~\ref{alg:bundle-newton-method} eventually results in superlinear improvement.
We place the proof in Section~\ref{sec:SuperPolyak}. 
\begin{thm}\label[theorem]{thm:SuperPolyak}
Suppose Assumptions~\ref{item:assumption:main:sharpness},~\ref{item:assumption:main:b},
and~\ref{item:assumption:main:fallback} hold at $\bar x$ for some algorithmic mapping $\cA$ with contraction factor $\rho$ and radii $\radius_1$ and $\radius_2$. Let $L$ be an upper bound for the maximal norm element of
$\subg(B_{\delta}(\bar x))$ and a Lipschitz constant of $f$ on $B_{\delta}(\bar x)$.
Define
$$
\kappa := \frac{L}{\mu}; \qquad  \Delta := f(x_0) - f^\ast.
$$
Fix an initial point $x\in \RR^d$ satisfying the bounds:
\begin{align}
  \begin{aligned}
    \norm{x_0 - \bar{x}} &\leq
    \left\{ \left( \frac{2}{1 - \rho} \right)\left(
      1 + \max\set{2 \kappa \, \frac{1 + \rho}{1 - \rho}, \frac{4L}{3}}\right)\right\}^{-1}
      \min\set{{\frac{\delta}{4}}, \radius_1};\\
    \dist(x_0, \cX_{\ast}) &\leq
    \frac{\radius_2}{1 + \max\set{2\kappa \, \frac{1 + \rho}{1 - \rho}, \frac{4L}{3}}}.
  \end{aligned}\label{eq:initialcondfinal}
\end{align}
Define the constant (where $\constsuper$ appears in Theorem~\ref{corollary:function-value-reduction})
\begin{align}
  K_1 := \ceil{\max\left\{\log_2\left(
  \Delta \cdot \max\set{
      2(2\constsuper)^{1/\eta}, \frac{(2\constb)^{1/\eta}}{\mu^{1 + 1/\eta}},
      (\kappa\constsuper)^{1/\eta}}
  \right), \log_{\frac{3}{2}} \left( \frac{3}{\mu} \right) \right\}}.\label{eq:k1}
\end{align}
Then for any $\epsilon > 0$, Algorithm~\ref{alg:bundle-newton-method} successfully terminates with at most
\begin{enumerate}
 \item $\ceil{\frac{1}{1-\rho} \log(2\kappa)} K_1 $ evaluations of $\cA$;
\item $d K_1
  + d \ceil{\frac{\log \log_2 \left(1/\epsilon\right)}{\log(1 + \eta)}}$ evaluations of $g$.
\end{enumerate}
\end{thm}

The following corollary examines the complexity of Algorithm~\ref{alg:bundle-newton-method} when the fallback method arises from the Polyak subgradient method. 
In this setting, evaluating $\cA$ requires evaluating both $G$ and $f$ once.
We place the proof the following corollary in Section~\ref{sec:corsuperpolyak}.
\begin{corollary}\label{cor:corsuperpolyak}
  Consider the setting of Theorem~\ref{thm:SuperPolyak}. Suppose that the fallback algorithm is $\mathtt{PolyakSGM}(x, \epsilon)$. Define $\rho := \sqrt{1-(2\kappa)^{-2}}$ and suppose that
  \begin{align*}
  \begin{aligned}
    \norm{x_0 - \bar{x}} &\leq
    \left\{ \left( \frac{2}{1 - \rho} \right)\left(
      1 + \max\set{2 \kappa \, \frac{1 + \rho}{1 - \rho}, \frac{4L}{3}}\right)\right\}^{-1}
     {\frac{\delta}{4}};\\
    \dist(x_0, \cX_{\ast}) &\leq
  \frac{1}{1 + \max\set{2\kappa \, \frac{1 + \rho}{1 - \rho}, \frac{4L}{3}}}
  \left(\frac{\mu}{4\constb}\right)^{1/\eta}.
  \end{aligned}
  \end{align*}
  Then~\cref{alg:bundle-newton-method} will successfully terminate after at most
  \[
    \max\set{d, \ceil{8\kappa^2 \log(2\kappa)}} K_1
    + d \ceil{\frac{\log \log_2 \left(\frac{1}{\epsilon}\right)}{
    \log\left(1 + \eta\right)}}
  \]
  evaluations of $\subg$, where $K_1$ appears in~\eqref{eq:k1}.
\end{corollary}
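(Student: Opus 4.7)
My plan is to instantiate~\cref{thm:SuperPolyak} with the algorithmic mapping given by one step of $\polyak$, and then refine the resulting $\cA$-evaluation count into a $\subg$-evaluation count using~\cref{thm:polyak} directly. The first and crucial step is to verify Assumption~\ref{item:assumption:main:fallback} for the map $\cA(x):=x-\tfrac{f(x)-f^\ast}{\|v\|^2}v$, $v\in\subg(x)$; a single-step Polyak contraction calculation---the content of the forthcoming Lemma~\ref{lemma:one-step-improvement}, which also underpins~\cref{thm:polyak}---shows that if $x\in B_\delta(\bar x)\setminus\cX_\ast$ and $\dist(x,\cX_\ast)\leq(\mu/(4\constb))^{1/\eta}$, then for every $\hat x\in P_{\cX_\ast}(x)$ one has $\|\cA(x)-\hat x\|\leq\rho\,\dist(x,\cX_\ast)$ with $\rho=\sqrt{1-(2\kappa)^{-2}}$. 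This is the main technical point: one must carefully combine sharpness with the $(b)$-regularity inequality~\eqref{eq:bestimate} so that the contraction factor simplifies cleanly to $\sqrt{1-(2\kappa)^{-2}}$ under the advertised distance bound. The upshot is that Assumption~\ref{item:assumption:main:fallback} holds with $\radius_1=\delta$ and $\radius_2=(\mu/(4\constb))^{1/\eta}$.

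Next, I would match the initialization. Because $\radius_1=\delta\geq\delta/4$, the quantity $\min\{\delta/4,\radius_1\}$ appearing in~\cref{thm:SuperPolyak} equals $\delta/4$; substituting $\radius_2=(\mu/(4\constb))^{1/\eta}$ into the theorem's distance bound recovers exactly the second displayed bound of the corollary. Hence~\cref{thm:SuperPolyak} applies with the very same constant $K_1$ from~\eqref{eq:k1} and yields (i) at most $dK_1+d\ceil{\log\log_2(1/\epsilon)/\log(1+\eta)}$ evaluations of $\subg$ from the $\bundle$ subroutine, and (ii) at most $\ceil{\frac{1}{1-\rho}\log(2\kappa)} K_1$ evaluations of $\cA$ from the fallback subroutine.

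Finally, I would convert the $\cA$-count into a $\subg$-count and tighten it. Each call $\mathtt{FallbackAlg}(\cA,x_\outind,\tfrac{1}{2}(f(x_\outind)-f^\ast))$ in line~\ref{alg:fallbackstep} is, by construction of $\cA$, literally a call to $\polyak(x_\outind,\tfrac{1}{2}(f(x_\outind)-f^\ast))$, so~\cref{thm:polyak} directly bounds its cost by $\ceil{8\kappa^2\log(2\kappa)}$ evaluations of $\subg$ (this is the sharp bound consistent with $\rho=\sqrt{1-(2\kappa)^{-2}}$). Summing over the at most $K_1$ fallback invocations together with the bundle evaluations from the previous step, and using $a+b\leq 2\max\{a,b\}$ to consolidate the two $K_1$-scale contributions into a single $\max$, yields the total evaluation count stated in the corollary. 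All the remaining work after the single-step contraction is bookkeeping: translating parameters into the theorem, invoking the right counts, and consolidating.
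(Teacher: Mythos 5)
Your plan mirrors the paper's proof exactly: the paper's own argument is the one-line observation that the corollary follows by combining \cref{lemma:one-step-improvement} (which shows $\polyak$'s update is an algorithmic mapping with $\radius_1=\delta$, $\radius_2=(\mu/(4\constb))^{1/\eta}$, $\rho=\sqrt{1-(2\kappa)^{-2}}$) with \cref{thm:SuperPolyak}. Your verification of Assumption~\ref{item:assumption:main:fallback}, parameter-matching, and translation of $\cA$-evaluations into $\subg$-evaluations are all aligned with that route, and your use of $1-\rho\geq\tfrac{1}{2}(2\kappa)^{-2}$ to replace $\lceil\tfrac{1}{1-\rho}\log(2\kappa)\rceil$ by $\lceil 8\kappa^2\log(2\kappa)\rceil$ is correct.

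However, there is a genuine gap in the final accounting step. You propose to bound the total $\subg$-cost of the first $K_1$ outer iterations by combining the bundle count $dK_1$ with the fallback count $\lceil 8\kappa^2\log(2\kappa)\rceil K_1$ via $a+b\leq 2\max\{a,b\}$. That inequality yields $2\max\set{d,\lceil 8\kappa^2\log(2\kappa)\rceil}K_1$, not the claimed $\max\set{d,\lceil 8\kappa^2\log(2\kappa)\rceil}K_1$; the factor of two does not disappear. Moreover, the factor cannot simply be optimized away because each outer iteration of \cref{alg:bundle-newton-method} unconditionally executes $\bundle$ (costing $d$ evaluations of $\subg$) and \emph{then}, when the bundle step fails to halve the gap, additionally runs $\mathtt{FallbackAlg}$; the two costs genuinely add within a single outer iteration. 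The bound your reasoning literally establishes is therefore $\bigl(d+\lceil 8\kappa^2\log(2\kappa)\rceil\bigr)K_1 + d\lceil\tfrac{\log\log_2(1/\epsilon)}{\log(1+\eta)}\rceil$, which is what you should state (or $2\max\{\cdot,\cdot\}K_1+\cdots$ if you prefer the $\max$ form). As written, the $a+b\leq 2\max\{a,b\}$ step does not "yield the total evaluation count stated in the corollary," and the claim that it does is the weak point of the argument.
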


We now turn our attention to further consequences of Theorem~\ref{thm:SuperPolyak}.

\subsection{Consequences for root-finding and feasibility problems}\label{sec:discussion}

In this section, we describe consequences of Theorem~\ref{thm:SuperPolyak} for root-finding and feasibility problems -- two settings where the optimal value $f^\ast$ is known and equal to zero. 
For both problem classes, we consider a simple scenario and discuss related literature.
Further extensions are possible. For example, we may consider more complex problem structure using the calculus results of the upcoming Section~\ref{sec:breg}. We may also use further generalized gradient maps $\subg$. 
We omit these extensions for brevity.

\subsubsection{Root-finding problems}

We have the following corollary for root-finding problems. We place the proof in Section~\ref{sec:cor:newtonsemismooth}.
\begin{corollary}\label{cor:newtonsemismooth}
Let $F \colon \RR^d \rightarrow \RR^m$ be a locally Lipschitz mapping. Define $\cX_\ast = F^{-1}(0)$ and let $\bar x \in \cX_\ast$. Fix $\mu, \eta > 0$ and assume that 
\begin{enumerate}
\item \label{cor:newtonsemismooth:metric}$F$ is $\mu$-metrically subregular at $\bar x$, meaning 
$$
\|F(x)\| \geq \mu\, \dist(x, \cX_\ast) \qquad \text{ for all $x$ near $\bar x$.}
$$
\item \label{cor:newtonsemismooth:b} $(F, \partial F)$ is $(b)$-regular along $\cX_\ast$ at $\bar x$ with exponent $1+\eta$, meaning there exists $C > 0$ such that the estimate
\begin{align*}%\label{eq:bregestimate}
|F(x) +A(y - x)| \leq C\|y- x\|^{1+\eta}
\end{align*} 
holds for all $x$ near $\bar x$, $A \in \partial F(x)$, and $y \in \cM$ near $\bar x$.
\end{enumerate}
In particular, Item~\ref{cor:newtonsemismooth:b} is automatically satisfied when $\cX_\ast$ is isolated at $\bar x$ and $F$ is semialgebraic. Now define a function and generalized gradient mapping: for all $x \in \RR^d$,
$$
f(x) := \|F(x)\| \qquad \text{ and } \qquad \subg(x) := \begin{cases}
\partial F(x)^\T \frac{F(x)}{\|F(x)\|};\\
\partial F(x)^\T  \closedball.
\end{cases}
$$
Then $f$ and $\subg$ satisfy assumptions \ref{item:assumption:main:sharpness} and~\ref{item:assumption:main:b}. 
Therefore, Algorithm~\ref{alg:bundle-newton-method} with fallback method $\mathtt{PolyakSGM}$ locally superlinearly converges to a root of $F$. 
\end{corollary}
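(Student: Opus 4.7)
The plan is to reduce the corollary to a direct verification of Assumptions~\ref{item:assumption:main:sharpness} and~\ref{item:assumption:main:b} for the specific $f$ and $\subg$, and then invoke Theorem~\ref{thm:SuperPolyak} with the fallback guarantee of Theorem~\ref{thm:polyak}. Note first that $f^{\ast}=0$ and $\cX_{\ast}=\{f=0\}$, so Assumption~\ref{item:assumption:main:sharpness} is literally the metric subregularity hypothesis (item~\ref{cor:newtonsemismooth:metric}) once we shrink $\delta$ so the subregularity estimate holds on $B_{\delta}(\bar x)$.

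The bulk of the work is Assumption~\ref{item:assumption:main:b}. Since $y\in\cX_{\ast}$ gives $f(y)=0$, we must show $|f(x)+\langle v,y-x\rangle|\le \constb\|y-x\|^{1+\eta}$ for every $v\in\subg(x)$. I would split into two cases dictated by the definition of $\subg$. If $F(x)\neq 0$, write $v=A^{\T}\,F(x)/\|F(x)\|$ for some $A\in\partial F(x)$ and observe
\[
f(x)+\langle v,y-x\rangle
=\|F(x)\|+\Bigl\langle \tfrac{F(x)}{\|F(x)\|},\,A(y-x)\Bigr\rangle
=\Bigl\langle \tfrac{F(x)}{\|F(x)\|},\,F(x)+A(y-x)\Bigr\rangle,
\]
so Cauchy--Schwarz combined with the $(b)$-regularity of $(F,\partial F)$ (item~\ref{cor:newtonsemismooth:b}) yields a bound by $C\|y-x\|^{1+\eta}$. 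If $F(x)=0$, then $f(x)=0$ and $v=A^{\T}u$ for some $A\in\partial F(x)$ and $u\in\closedball$; then $|f(x)+\langle v,y-x\rangle|=|\langle u,A(y-x)\rangle|\le\|A(y-x)\|=\|F(x)+A(y-x)\|\le C\|y-x\|^{1+\eta}$ by the same $(b)$-regularity. Local boundedness and nonempty-valuedness of $\subg$ follow from local Lipschitzness of $F$ and the standard properties of $\partial F$, so both verifications are complete with $\constb=C$.

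For the parenthetical claim that item~\ref{cor:newtonsemismooth:b} is automatic when $F$ is semialgebraic and $\cX_{\ast}$ is isolated at $\bar x$, I would simply appeal to Proposition~\ref{prop:rootfinding} (with $F_{1}=\mathrm{id}$ and $F_{2}=F$) applied componentwise, or more directly to the $(b)$-regularity calculus for semialgebraic mappings developed in Section~\ref{sec:breg}; this produces a rational exponent $1+\eta>1$. Having validated both assumptions, Theorem~\ref{thm:polyak} certifies that $\mathtt{PolyakSGM}$ is an algorithmic mapping in the sense of Assumption~\ref{item:assumption:main:fallback} (via Lemma~\ref{lemma:one-step-improvement} as already noted in the paper), and Theorem~\ref{thm:SuperPolyak} then gives local superlinear convergence of Algorithm~\ref{alg:bundle-newton-method} to a root of $F$.

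The only genuinely delicate step is the case $F(x)\neq 0$: one must recognize that adding and subtracting $\|F(x)\|$ lets the inner product be rewritten as a projection of the $(b)$-regularity residual onto the unit vector $F(x)/\|F(x)\|$; everything else is bookkeeping. I do not expect any obstacle beyond choosing $\delta$ small enough that both the subregularity inequality and the $(b)$-regularity estimate for $(F,\partial F)$ hold simultaneously on $B_{2\delta}(\bar x)$.
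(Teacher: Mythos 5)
Your proof is correct, but it verifies the key step by a different route than the paper. The paper's own argument for Assumption~\ref{item:assumption:main:b} is a two-line appeal to the calculus of Section~\ref{sec:breg}: since the norm is semialgebraic, Corollary~\ref{cor:semib} applied to the composition $\|\cdot\|\circ F$ (with the $(b)$-regularity of $(F,\partial F)$ supplied by hypothesis, or by Lemma~\ref{lem:basicexamples} in the isolated semialgebraic case) immediately yields the estimate for $f=\|F\|$ and the formal chain-rule gradient map $\subg$. You instead verify the estimate directly: in the case $F(x)\neq 0$ the identity $f(x)+\langle v,y-x\rangle=\langle F(x)/\|F(x)\|,\,F(x)+A(y-x)\rangle$ plus Cauchy--Schwarz reduces everything to the hypothesis on $(F,\partial F)$, and the case $F(x)=0$ is even easier. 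Your computation is more elementary, avoids the chain-rule machinery entirely, and has the small advantage of visibly preserving the exponent $\eta$ and the constant ($\constb=C$), whereas the generic semialgebraic chain rule only asserts existence of some exponent; the paper's route is shorter and reuses infrastructure already built. One small inaccuracy: for the parenthetical claim that Item~\ref{cor:newtonsemismooth:b} is automatic in the semialgebraic isolated case, Proposition~\ref{prop:rootfinding} ``applied componentwise'' is not the right tool (that proposition concerns the scalar function $\|F\|$, not the vector-valued pair $(F,\partial F)$); the correct reference is Lemma~\ref{lem:basicexamples}, Item~\ref{lem:basicexamples:item:semi}, which is your stated alternative and is what the paper uses, so this does not affect correctness. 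The remaining steps (sharpness $=$ metric subregularity, local boundedness of $\subg$, and the invocation of Lemma~\ref{lemma:one-step-improvement} and Theorem~\ref{thm:SuperPolyak}) match the paper.
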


We now place this result in the context of the so-called ``semismooth Newton" method, which has a vast literature, summarized in the seminal papers and monographs~\cite{kummer1988newton,qi1993convergence,facchinei2007finite,izmailov2014newton,klatte2006nonsmooth,qi1993nonsmooth}. 
To focus our discussion, we compare and contrast Corollary~\ref{cor:newtonsemismooth} with the results of~\cite{qi1993nonsmooth}.
The semismooth Newton method of~\cite{qi1993nonsmooth} directly generalizes the classical Newton method to nonsmooth equations, replacing the classical Jacobian with an element of the Clarke Jacobian.
For simplicity we describe this method for square systems $F = 0$, where $F \colon \RR^d \rightarrow \RR^d$ is a locally Lipschitz mapping. 
To solve this equation, the pioneering work of Qi and Sun~\cite{qi1993nonsmooth} considers the following assumptions near a root $\bar x$:
\begin{enumerate}
%\item {\bf (Isolated solution)} $\bar x$ is an isolated root of $F$
\item {\bf (Invertibility)} every $A \in \partial F(\bar x)$ is invertible (in particular $\bar x$ is isolated).
\item {\bf (Semismoothness)} $F$ is semismooth at $\bar x$, meaning 
\begin{align*}%\label{eq:bregestimate}
|F(x) +A(\bar x - x)| = o(\|\bar x -  x\|) \qquad \text{ for all $A \in \partial F(x)$ as $x \rightarrow \bar x$.}
\end{align*} 
\end{enumerate}
Under these assumptions, the work~\cite{qi1993nonsmooth} shows that the semismooth Newton iteration 
\begin{align}\label{eq:semismooth}
x_{k+1} = x_k - A_k^{-1}  F(x_k) \qquad \text{ for some $A_k \in \partial F(x_k)$.}
\end{align}
is locally well-defined and the iterates $x_k$ converge superlinearly to $\bar x$. 
Much work on semismooth Newton methods considers similar conditions to the work of  Qi and Sun~\cite{qi1993nonsmooth}. 
While semismoothness is in some sense minimal, 
it is desirable to weaken the invertibility condition to the metric subregularity condition of Corollary~\ref{cor:newtonsemismooth}.
Such a result would be useful for the acceleration of certain first-order methods for signal recovery, which may be represented by the fixed-point iteration of Example~\ref{example:fixed-point}. In particular, it is known that the \emph{proximal gradient operator} associated to certain {compressive sensing} problems~\cite{cand_tao,Don06} is metrically subregular, but does not satisfy the stronger invertibility condition (see Section~\ref{sec:compressedsensing} for a description of the problem).
To the best of our knowledge, Corollary~\ref{cor:newtonsemismooth} presents the first semismooth Newton-type method that converges under the metric subregularity condition, even in the case of an isolated solution of a general semismooth mapping $F$.

Finally, we mention two further semismooth Newton-type methods that succeed under the metric subregularity condition, but require further assumptions. First, the SuperMann scheme of~\cite{8675506} proposes a nonsmooth (quasi) Newton scheme that converges superlinearly under semi-differentiability and metric subregularity if certain inverse Hessian approximations remain bounded throughout the developed algorithm; the latter property is nontrivial and not verified in that work. Second, the LP-Newton method of~\cite{facchinei2014lp} proposes a Newton-type methods that converges superlinearly under metric subregularity if a certain smoothness assumption holds; the assumption appears stronger than the classical semismoothness assumption considered in this work.

\subsubsection{Feasibility problems}

We have the following corollary for feasibility problems. We place the proof in Section~\ref{sec:cor:feasibilityfinal}.

\begin{corollary}\label{cor:feasibilityfinal}
Consider a collection of closed sets $\cX_i \subseteq \RR^d$ indexed by a finite set $I$.
Define $\cX_\ast := \bigcap_{i\in I}\cX_i$ and let $\bar x \in \cX_\ast$.
Fix $\mu, \eta, C > 0$ and suppose that 
\begin{enumerate}
\item  \label{cor:feasibilityfinal:regular} The family $\{\cX_i\}_i$ is $\mu$-linearly regular at $\bar x$, meaning
\begin{align*}
\sum_{i \in I} \dist(x, \cX_i)  \geq \mu \, \dist(x, \cX_\ast) \qquad \text{for all $x$ near $\bar x$.}
\end{align*}
\item \label{cor:feasibilityfinal:semismoothness} For all $i \in I$, we have 
$$
|\dotp{v, y - x}| \leq C \|v\|\|y-x\|^{1+\eta}, 
$$
for all $x \in \cX_i$ and $y \in \cX_\ast$ near $\bar x$ and all $v \in N_{\cX_i}(x)$.
\end{enumerate}
In particular, Item~\ref{cor:feasibilityfinal:semismoothness} is automatically satisfied when either (i) $\cX_i$ is a $C^2$ manifold for all $i \in I$ or (ii) $\cX_\ast$ is isolated at $\bar x$ and $\cX_i$ is semialgebraic or a $C^2$ smooth manifold around $\bar x$ for $i \in I$. 
Now define a function and generalized gradient mapping: for all $x \in \RR^d$,
$$
f(x) = \sum_{i \in I}\dist(x,\cX_i) \qquad \text{ and } \qquad \subg(x) =
\sum_{i\in I} \partial \dist(x, \cX_i),
$$
Then $f$ and $\subg$ satisfy assumptions \ref{item:assumption:main:sharpness}  and~\ref{item:assumption:main:b}.
Therefore, Algorithm~\ref{alg:bundle-newton-method} with fallback method $\mathtt{PolyakSGM}$ locally superlinearly converges to $\cX_\ast$.
\end{corollary}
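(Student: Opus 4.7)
The plan is to directly verify that $f$ and $\subg$ satisfy Assumptions~\ref{item:assumption:main:sharpness} and~\ref{item:assumption:main:b} at $\bar x$, after which the superlinear convergence conclusion follows immediately from Corollary~\ref{cor:corsuperpolyak}. Because $\cX_\ast \subseteq \cX_i$ for every $i \in I$, the optimal value is $f^\ast = 0$ and Assumption~\ref{item:assumption:main:sharpness} is identical to hypothesis~\ref{cor:feasibilityfinal:regular}. The real work lies in verifying Assumption~\ref{item:assumption:main:b}.

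To that end, I would write a generic $v \in \subg(x)$ as $v = \sum_{i \in I} v_i$ with $v_i \in \partial \dist(x, \cX_i)$, and bound each summand of $\sum_i (\dist(x, \cX_i) + \dotp{v_i, y-x})$ separately, splitting on whether $x \in \cX_i$ or not. When $x \in \cX_i$, formula~\eqref{eq:distfunctionsub} forces $\dist(x, \cX_i) = 0$ and $v_i \in N_{\cX_i}(x) \cap \closedball$, so hypothesis~\ref{cor:feasibilityfinal:semismoothness} gives $|\dotp{v_i, y-x}| \leq C\|y-x\|^{1+\eta}$ directly. When $x \notin \cX_i$, formula~\eqref{eq:distfunctionsub} expresses $v_i$ as a convex combination of unit vectors $u_{\hat x} := (x - \hat x)/\|x - \hat x\|$ indexed by $\hat x \in P_{\cX_i}(x)$, and a short expansion of $\|x - \hat x\|^2 = \dotp{x - \hat x, x - \hat x}$ yields the key algebraic identity
$$\dist(x, \cX_i) + \dotp{u_{\hat x}, y - x} \;=\; \dotp{u_{\hat x}, y - \hat x}.$$
Since $u_{\hat x} \in N_{\cX_i}(\hat x)$ (the standard fact recalled immediately after~\eqref{eq:distfunctionsub}) and both $\hat x$ and $y$ lie in $\cX_i$ near $\bar x$, hypothesis~\ref{cor:feasibilityfinal:semismoothness} applies at $\hat x$; combining it with $\|y - \hat x\| \leq \|y - x\| + \dist(x, \cX_i) \leq 2\|y - x\|$ (using $\cX_\ast \subseteq \cX_i$ and $y \in \cX_\ast$) produces a bound of $2^{1+\eta} C\|y - x\|^{1+\eta}$. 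Averaging over the convex combination and summing over $i \in I$ then delivers~\eqref{eq:bestimate} with $\constb = 2^{1+\eta}|I|C$ and the same exponent $\eta$.

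For the ``in particular'' clause, case~(i) and the manifold half of case~(ii) follow from the tangent-normal approximation $y - x \in T_{\cX_i}(x) + C\|y-x\|^2 \closedball$ recorded at the end of the Manifolds paragraph, combined with the orthogonality of Clarke normals to the tangent space; this yields hypothesis~\ref{cor:feasibilityfinal:semismoothness} with exponent $\eta = 1$. For the semialgebraic half of case~(ii), where $\cX_\ast = \{\bar x\}$ is isolated, I would invoke Proposition~\ref{prop:basicsemialgebraic} applied to the semialgebraic function $\dist(\cdot, \cX_i)$ (which has isolated minimum $\bar x$), or equivalently a \L{}ojasiewicz-inequality argument, to produce the bound with some $\eta > 0$.

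The main obstacle is identifying the algebraic identity in Case~B: the single observation that ``distance-plus-linear-form at $x$'' collapses to a pure normal-cone pairing evaluated at the projection $\hat x$ is what allows the semismoothness hypothesis~\ref{cor:feasibilityfinal:semismoothness} — stated for points $x \in \cX_i$ — to be applied at all. Everything else is routine bookkeeping and convexity to handle the set-valued nature of $\partial \dist(x, \cX_i)$.
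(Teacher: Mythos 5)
Your argument is, in substance, the paper's own proof with the two supporting results unpacked: the paper verifies Assumption~\ref{item:assumption:main:b} by citing Lemma~\ref{lem:bregdistance} (each pair $(\dist(\cdot,\cX_i),\partial\dist(\cdot,\cX_i)^\T)$ is $(b)$-regular along $\cX_\ast$ iff hypothesis~\ref{cor:feasibilityfinal:semismoothness} holds) and then Corollary~\ref{cor:bregsum} (the sum rule), and the proof of Lemma~\ref{lem:bregdistance} is exactly your case split: $\dist(z,\cX)+\dotp{v_z,\hat z - z}=0$ collapses the estimate to the normal-cone pairing at the projection, followed by the bound $\|y-\hat z\|\lesssim\|y-z\|$ and averaging over the convex hull. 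Your constant $2^{1+\eta}|I|C$ is in fact slightly sharper than the paper's (which uses $\|a+b\|^{1+\eta}\leq 2\|a\|^{1+\eta}+2\|b\|^{1+\eta}$ to get a factor of $4$), and your handling of sharpness and the final appeal to Corollary~\ref{cor:corsuperpolyak} match the paper. One routine point you gloss over but should state: the projection $\hat x\in P_{\cX_i}(x)$ is itself near $\bar x$ (since $\|\hat x-\bar x\|\leq \dist(x,\cX_i)+\|x-\bar x\|\leq 2\|x-\bar x\|$, as $\bar x\in\cX_i$), which is needed before hypothesis~\ref{cor:feasibilityfinal:semismoothness} can be applied at $\hat x$; this is the content of Lemma~\ref{lem:distanceincrease}.

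The one genuine flaw is in the semialgebraic half of the ``in particular'' clause. You propose invoking Proposition~\ref{prop:basicsemialgebraic} for $h=\dist(\cdot,\cX_i)$ ``which has isolated minimum $\bar x$'' --- but it does not: the minimizer set of $\dist(\cdot,\cX_i)$ is all of $\cX_i$, so the hypothesis of that proposition fails, and in any case it delivers a function-value estimate rather than the normal-cone inequality of hypothesis~\ref{cor:feasibilityfinal:semismoothness}. The correct reference is Lemma~\ref{lem:bregdistance}, Item~\ref{lem:bregdistance:semialgebraic} (equivalently, Lemma~\ref{lem:basicexamples}, Item~\ref{lem:basicexamples:item:semi} applied to $\dist(\cdot,\cX_i)$ along the singleton $\cY=\{\bar x\}$, combined with the ``only if'' direction of Lemma~\ref{lem:bregdistance}); the underlying fact is the semismoothness of semialgebraic sets from~\cite{BDL09}, which is indeed of \L{}ojasiewicz type as you suggest. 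With that citation repaired, the proof is complete.
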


Some comments are in order. We note that Item~\ref{cor:feasibilityfinal:semismoothness} is a natural notion of $(b)$-regularity for nested sets $\cX_\ast \subseteq \cX_i$. We comment more on its history and examples in Section~\ref{sec:examplesbreg}.
Next we discuss related work.
The most related results in the literature are developed in~\cite{Pang15,pang2015set}. 
The work~\cite{Pang15} in particular develops a superlinearly convergent procedure for nonconvex feasibility problems, which solves a quadratic programming problem at each iteration.
The algorithm is shown to converge superlinearly when the classical \emph{transversality} property holds 
\begin{align}\label{eq:regularitynormalcones}
\text{If } \sum_{i \in I} v_i = 0 \text{, for $v_i \in N_{\cX_i}(\bar x)$, then } v_i = 0 \text{ for $i \in I$,} 
\end{align}
and either of the following two conditions hold for $i \in I$:
\begin{enumerate}
\item the set $\cX_i$ is a manifold;
\item the normal cone to $\cX_i$ has a unique unit norm element near $\bar x$.
\end{enumerate}
The first setting is most interesting. In this case, the following corollary holds.
\begin{corollary}\label{cor:manifoldsuper}
Consider the setting of Corollary~\ref{cor:feasibilityfinal}. Suppose that the set $\cX_i$ is a $C^2$ manifold for all $i \in I$ and that the family intersects transversely at $\bar x$ in the sense of~\eqref{eq:regularitynormalcones}.
Then Items~\ref{cor:feasibilityfinal:regular} and~\ref{cor:feasibilityfinal:semismoothness} (with $\eta = 1$) of Corollary~\ref{cor:feasibilityfinal} hold.
\end{corollary}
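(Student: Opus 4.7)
The plan is to verify Items~\ref{cor:feasibilityfinal:regular} and~\ref{cor:feasibilityfinal:semismoothness} of Corollary~\ref{cor:feasibilityfinal} separately; once both are established, Corollary~\ref{cor:feasibilityfinal} directly yields the desired superlinear convergence.

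For Item~\ref{cor:feasibilityfinal:semismoothness} (the $(b)$-regularity estimate with $\eta=1$), I would invoke the $C^2$ manifold fact recorded in the \textbf{Manifolds} paragraph of Section~1: for each $i \in I$ there exists $C_i > 0$ with $y - x \in T_{\cX_i}(x) + C_i\|y-x\|^2\closedball$ for all $x, y \in \cX_i$ near $\bar x$. Because $\cX_i$ is a $C^2$ manifold, the Clarke normal cone $N_{\cX_i}(x)$ equals the orthogonal complement $T_{\cX_i}(x)^\perp$. Writing $y-x = t+r$ with $t \in T_{\cX_i}(x)$ and $\|r\| \le C_i\|y-x\|^2$, we have $\langle v, y-x\rangle = \langle v, r\rangle$ for every $v \in N_{\cX_i}(x)$, whence $|\langle v,y-x\rangle| \le \|v\| \cdot C_i\|y-x\|^2$. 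Taking $C := \max_i C_i$ delivers Item~\ref{cor:feasibilityfinal:semismoothness} with exponent $\eta = 1$, and the assumption $\cX_\ast \subseteq \cX_i$ ensures that the inequality applies to all relevant pairs $(x,y)$.

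For Item~\ref{cor:feasibilityfinal:regular} (linear regularity), I would translate the normal cone transversality hypothesis into a Jacobian surjectivity condition. Since each $\cX_i$ is a $C^2$ manifold, there exist $C^2$ maps $F_i \colon U \to \RR^{m_i}$ on a common neighborhood $U$ of $\bar x$ with $\cX_i \cap U = F_i^{-1}(0)$ and $\nabla F_i(\bar x)$ surjective. Then $N_{\cX_i}(\bar x) = \range(\nabla F_i(\bar x)^\T)$, so the hypothesis that $\sum_i v_i = 0$ with $v_i \in N_{\cX_i}(\bar x)$ forces each $v_i=0$ is exactly the statement that the concatenated Jacobian of $F := (F_1, \ldots, F_{|I|})$ is surjective at $\bar x$. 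By the implicit function theorem $\cX_\ast = F^{-1}(0)$ is a $C^2$ manifold near $\bar x$, and the classical Lyusternik estimate furnishes constants with $\|F(x)\| \asymp \dist(x,\cX_\ast)$ and $\|F_i(x)\| \asymp \dist(x,\cX_i)$ for $x$ near $\bar x$. Combining these with $\|F(x)\| \le \sum_i \|F_i(x)\|$ gives
\[
\sum_{i \in I}\dist(x,\cX_i) \;\gtrsim\; \sum_{i\in I}\|F_i(x)\| \;\geq\; \|F(x)\| \;\gtrsim\; \dist(x,\cX_\ast),
\]
which is Item~\ref{cor:feasibilityfinal:regular} for some $\mu > 0$.

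Neither step presents a serious obstacle: both rely on classical facts about $C^2$ manifolds, and the only care needed is to verify that the transversality hypothesis phrased in terms of Clarke normal cones coincides, in the manifold setting, with surjectivity of the combined Jacobian $\nabla F(\bar x)$. Once this dictionary is in place, the quadratic tangent approximation and the Lyusternik-type error bound do all the work.
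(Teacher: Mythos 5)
Your argument is correct and follows essentially the same route as the paper: the paper dispatches Item~\ref{cor:feasibilityfinal:regular} by citing the classical result that transversal manifold intersections are linearly regular, while you unpack the standard Lyusternik/surjectivity argument, and it dispatches Item~\ref{cor:feasibilityfinal:semismoothness} by invoking Lemma~\ref{lem:bregdistance} (case~\ref{lem:bregdistance:manifold}), whose proof is precisely the $T_{\cX_i}(x)+C\|y-x\|^2\closedball$ computation you re-derive (using $\cX_\ast\subseteq\cX_i$ to restrict $y$). The only cosmetic quibble is that the lower bound $\|F_i(x)\|\lesssim\dist(x,\cX_i)$ in your chain is the Lipschitz bound, not Lyusternik, but the overall estimate is right.
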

\noindent Thus, in the case of transversal manifold intersections, Algorithm~\ref{alg:bundle-newton-method} converges superlinearly (in fact, quadratically) under the same setting as~\cite{Pang15}. 
Beyond the manifold setting, Corollary~\ref{cor:feasibilityfinal} provides  additional consequences for semialgebraic intersections.
Finally, we mention one benefit of \cref{alg:bundle-newton-method} compared to the algorithm of~\cite{Pang15}: each step solves a linear system, rather than a quadratic programming problem. 

\paragraph{Outline of the rest of the paper.} Having stated all of our main results, we now turn to proofs and a brief numerical study. First, Section~\ref{sec:breg} studies the $(b)$-regularity property, providing basic examples and proving calculus rules.  Next, Section~\ref{sec:proofofalgsresults} proves all the algorithmic results stated in this section. 
Finally, Section~\ref{sec:experiments} presents a brief numerical study and describes several implementation strategies.

\section{The $(b)$-regularity property: examples and calculus}\label{sec:breg}

In this section, we present basic examples and calculus for the $(b)$-regularity property in Assumption~\ref{item:assumption:main:b}. This property was recently studied in the manuscript~\cite{DDJ21}, focusing on functions and the Clarke subdifferential. In the following definition, we broaden the concept to mappings.\footnote{Note the slight discrepancy with Assumption~\ref{item:assumption:main:b}: in the terminology of this section, the pair $(f, \subg^\T)$ is $(b)$-regular along $\cX_\ast$ at $\bar x$ with exponent $1+\eta$.}

\begin{definition}[$(b)$-regularity along a set $\cY$]\label{def:breg}
\rm Consider a locally Lipschitz mapping $F\colon \RR^d \rightarrow \RR^m$, a set $\cY$
and a nonempty-valued $\jac \colon \RR^d \multo \RR^{m \times d}$. Fix a point $\bar x \in \cY$ and a scalar $\eta > 0$. Then the pair $(F, \jac)$ is \emph{$(b)$-regular along $\cY$ at $\bar x$ with exponent $1+\eta$} if there exists $C > 0$ such that the estimate
\begin{align}\label{eq:bregestimate}
\|F(x) +A(y - x) - F(y)\| \leq C\|x - y\|^{1+\eta}
\end{align} 
holds for all $x$ near $\bar x$, $A \in \jac(x)$, and $y \in \cY$ near $\bar x$.
\end{definition}

\subsection{Examples}\label{sec:examplesbreg}

A natural choice for the mapping $\jac$ in Definition~\ref{def:breg} is simply the Clarke Jacobian: $\jac = \partial F$. More generally, ``generalized Jacobian mappings" can arise from automatic differentiation routines. Recently, Bolte and Pauwels~\cite{bolte2021conservative} developed a mathematical model for such routines. In their work they identified that the output of such routines  are often \emph{conservative set-valued vector fields,} as formalized in the following definition.

\begin{definition}[Conservative set-valued vector fields.]
{\rm Consider a locally Lipschitz mapping $F\colon \RR^d \rightarrow \RR^m$ and a set-valued mapping $\jac \colon \RR^d \multo \RR^{m \times d}$ with nonempty compact-values and closed graph. Then $\jac$ is a \emph{conservative set-valued vector field for $F$} if for any absolutely continuous curve $x \colon [0,1] \rightarrow \RR^d$, we have 
\begin{align*}
\frac{d}{dt} F(x(t)) = A x(t) \qquad \text{for a.e. $t \in [0, 1]$ and all $A \in \jac(x(t))$}.
\end{align*}}
\end{definition}

As shown by~\cite{bolte2021conservative} (with precursors in~\cite{drusvyatskiy2015curves,davis2020stochastic}), the Clarke Jacobian $\partial F$ is a conservative set-valued vector field for any semialgebraic mapping $F$, though other examples are possible~\cite{lewis2021structure}. 
The later work~\cite{DD21} then showed that whenever both $F$ and $\jac$ are semialgebraic, conservative set-valued vector fields satisfy the $(b)$-regularity along singleton sets. This is quoted in the following lemma, consisting of several basic examples of Definition~\ref{def:breg}.
\begin{lemma}[Basic Examples]\label{lem:basicexamples}
Suppose that $F \colon \RR^d \rightarrow \RR^m$ is a locally Lipschitz mapping. Fix a point $\bar x \in \RR^d$ and closed sets $\cY\subseteq \RR^d$ containing $\bar x$.
\begin{enumerate}
\item {\bf (Smooth mappings)} \label{lem:basicexamples:item:c1} If $F$ is $C^1$ near $\bar x$ and the Jacobian $\nabla F$ is locally Lipschitz, then the pair $(F, \nabla F)$ is (b)-regular along $\cY$ at $\bar x$ with exponent $2$. 
\item {\bf (Sublinear functions)} \label{lem:basicexamples:item:sublinear} If $m = 1$, the mapping $F$ is sublinear, and $\cY = \mathrm{Lin}(F)$, then for all $\eta > 0$ the pair $(F, \partial F)$ is $(b)$-regular along $\cY$ at $\bar x$ with exponent $1+\eta$.
\item {\bf (Semialgebraic mappings)} \label{lem:basicexamples:item:semi} If $F$ is semialgebraic, $\cY = \{\bar x\}$, and $\jac$ is a semialgebraic conservative set-valued vector field for $F$ (e.g., $\jac = \partial F$), then there exists $\eta > 0$ such that the pair $(F, G)$ is $(b)$-regular along $\cY$ at $\bar x$ with exponent $1+\eta$.
\end{enumerate}
\end{lemma}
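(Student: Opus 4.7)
The plan is to handle the three items independently; each rests on a distinct classical ingredient.

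For Item~\ref{lem:basicexamples:item:c1}, I would invoke the fundamental theorem of calculus along the line segment from $x$ to $y$: $F(y) - F(x) - \nabla F(x)(y - x) = \int_0^1 [\nabla F(x + t(y-x)) - \nabla F(x)](y-x)\,dt$. Using the local Lipschitz constant $L$ of $\nabla F$, the integrand is bounded by $Lt\|y-x\|^2$, yielding the estimate with constant $L/2$ and exponent $2$. Note this argument does not even need $y \in \cY$; it applies to all nearby pairs $x, y$.

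For Item~\ref{lem:basicexamples:item:sublinear}, the key claim is that on $\mathrm{Lin}(F)$ the function $F$ coincides with $\dotp{v, \cdot}$ for every $v \in \partial F(0)$. Indeed, sublinearity yields $F(y) \geq \dotp{v, y}$ and $F(-y) \geq -\dotp{v, y}$, while the defining relation $F(-y) = -F(y)$ on $\mathrm{Lin}(F)$ pinches these into equality. Two further facts from convex analysis of sublinear functions close the argument: $\partial F(x) \subseteq \partial F(0)$ for every $x$, and $\dotp{v, x} = F(x)$ whenever $v \in \partial F(x)$. Combining all of this, $F(x) + \dotp{v, y-x} - F(y)$ is identically zero for every $x$, every $v \in \partial F(x)$, and every $y \in \mathrm{Lin}(F)$, so the $(b)$-regularity estimate holds with $C = 0$ and any $\eta > 0$.

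For Item~\ref{lem:basicexamples:item:semi}, I would lean on {\L}ojasiewicz's inequality for semialgebraic continuous functions combined with the conservative field identity. Define the semialgebraic error function $h(x) := \sup_{A \in \jac(x)} \|F(x) + A(\bar x - x) - F(\bar x)\|$; it is continuous since $\jac$ has closed graph with compact values. Applying the conservative field identity to the straight line from $\bar x$ to $x$ gives $F(x) - F(\bar x) = \int_0^1 A(t)(x - \bar x)\,dt$ for any measurable selection $A(t) \in \jac(\bar x + t(x - \bar x))$, which together with upper hemicontinuity of $\jac$ at $\bar x$ forces $h(x) = o(\|x - \bar x\|)$ as $x \to \bar x$. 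The {\L}ojasiewicz inequality for semialgebraic continuous functions vanishing at $\bar x$ then upgrades this to $h(x) \leq C\|x - \bar x\|^{1+\eta}$ for some $\eta, C > 0$, which is exactly the claimed $(b)$-regularity.

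The main obstacle is Item~\ref{lem:basicexamples:item:semi}: {\L}ojasiewicz on its own only produces some exponent $\alpha > 0$, which could be smaller than $1$. Obtaining the strict improvement $\alpha > 1$ requires that $h$ vanish superlinearly, and this is precisely where the conservative field property (as opposed to mere continuity of $\jac$) enters. This refined {\L}ojasiewicz argument has already been carried out in~\cite{DD21}, so in the writeup I would cite that work rather than reprove it.
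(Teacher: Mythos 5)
The paper itself offers essentially no proof: it declares Item~\ref{lem:basicexamples:item:c1} ``straightforward,'' cites~\cite{DDJ21} for Item~\ref{lem:basicexamples:item:sublinear}, and cites~\cite{DD21} (with~\cite{BDL09} for the Clarke case) for Item~\ref{lem:basicexamples:item:semi}. Your proposal therefore fills in more detail than the paper does.

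Your argument for Item~\ref{lem:basicexamples:item:c1} is the standard integral-form Taylor remainder and is correct. Your argument for Item~\ref{lem:basicexamples:item:sublinear} is also correct and is a genuinely nice elementary route: using that for a sublinear $F$ one has $\partial F(x)\subseteq \partial F(0)$, that $\dotp{v,x}=F(x)$ for $v\in\partial F(x)$, and that $F$ is linear with slope equal to any $v\in\partial F(0)$ on $\mathrm{Lin}(F)$, the residual $F(x)+\dotp{v,y-x}-F(y)$ vanishes identically, so the estimate holds with $C=0$ and any $\eta>0$. The paper just points at~\cite[Lemma~2.6.1]{DDJ21}; your argument is self-contained and correct.

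For Item~\ref{lem:basicexamples:item:semi}, however, the intermediate step in your sketch does not work, and it is worth flagging even though you ultimately fall back to citing~\cite{DD21} (which is what the paper does, so your final posture is correct). You claim that the conservative-field path integral plus upper hemicontinuity of $\jac$ at $\bar x$ forces $h(x)=o(\|x-\bar x\|)$. But upper hemicontinuity only says that $A\in\jac(x)$ and $A(t)\in\jac(\bar x + t(x-\bar x))$ both lie near the set $\jac(\bar x)$, which can be a genuinely set-valued object; it does \emph{not} control $\opnorm{A(t)-A}$. Concretely, take $F(x_1,x_2)=|x_1-x_2^2|$, $\bar x=0$, $x=(t^2,t)$ on the kink set, and $A=(0,0)\in\jac(x)=\mathrm{conv}\{(1,-2t),(-1,2t)\}$. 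Along the segment $A(s)=(1,-2st)$, so $\opnorm{A(s)-A}\approx 1$ uniformly; your bound $\int_0^1\opnorm{A(s)-A}\,ds\cdot\|x-\bar x\|$ gives only $O(\|x-\bar x\|)$, while the actual error is $t^2=O(\|x-\bar x\|^2)$. The true argument exploits directional cancellation in the integral and in~\cite{BDL09,DD21} is carried out via Whitney stratification / the projection formula, not by operator-norm continuity. (Separately, your claim that $h$ is \emph{continuous} from closed graph and compact values only gives upper semicontinuity; this is harmless for the one-sided Łojasiewicz bound you invoke but worth stating precisely.) Since you do defer to~\cite{DD21} in the end, this is a gap in the supporting intuition rather than in the formal proposal, but the ``upper hemicontinuity forces little-$o$'' step should not be presented as a proof sketch.
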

\begin{proof}
Item~\ref{lem:basicexamples:item:c1} is straightforward so we omit the proof. Item~\ref{lem:basicexamples:item:sublinear} is the shown in~\cite[Lemma 2.6.1]{DDJ21}. 
Item~\ref{lem:basicexamples:item:semi} is shown in~\cite{DD21} (the case of $\jac =  \partial F$ is shown in~\cite{BDL09}). 
\end{proof}

Finally we give several examples involving distance functions. 
Here we present a key sufficient condition -- Equation~\eqref{eq:semismoothsets}. 
This condition, which was first introduced in~\cite{whitney1992local,whitney1992tangents} for manifolds and recently studied for general sets in \cite{DDJ21}, is the classical notion of  $(b)$-regularity for two nested sets.
\begin{lemma}[Distance Functions]\label{lem:bregdistance}
Fix a point $\bar x \in \RR^d$ and closed sets $\cY \subseteq \cX$ in $\RR^d$ containing $\bar x$. Consider the following condition: there exists $C, \eta > 0$ such that  
\begin{align}\label{eq:semismoothsets}
|\dotp{v, y - x}| \leq C \|v\|\|y-x\|^{1+\eta},
\end{align}
for all $x \in \cX$ and $y \in \cY$ near $\bar x$ and all $v \in N_{\cX}(x)$.
Define $G := \partial \dist(\cdot, \cX)^\T$.
Then the pair $(\dist(\cdot, \cX), G)$ is $(b)$-regular along $\cY$ at $\bar x$ with exponent $1+\eta$ if and only if~\eqref{eq:semismoothsets} holds.
In particular,~\eqref{eq:semismoothsets} holds automatically when 
\begin{enumerate}
\item \label{lem:bregdistance:semialgebraic} $\cX$ is semialgebraic and $\cY = \{\bar x\}$.
\item \label{lem:bregdistance:manifold} $\cX = \cY$ and $\cX$ is a $C^2$ manifold around $\bar x$ (with $\eta = 1$).
\item\label{lem:bregdistance:cone} $\cX$ is a convex cone and $\cY = \cX \cap (-\cX)$ is its lineality space (with $\eta = 1$).
\end{enumerate}
\end{lemma}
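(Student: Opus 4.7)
The plan is to prove the biconditional between \eqref{eq:semismoothsets} and $(b)$-regularity of the pair $(\dist(\cdot,\cX), G)$ along $\cY$, and then verify each of the three sufficient conditions.

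For the forward implication, I would fix $x \in \cX$ near $\bar x$, $v \in N_{\cX}(x)$, and $y \in \cY$ near $\bar x$. Since both sides of \eqref{eq:semismoothsets} are positively homogeneous of degree one in $v$, I may assume $\|v\|\le 1$. Then \eqref{eq:distfunctionsub} yields $v \in \partial\dist(x,\cX)$, and because $y \in \cY \subseteq \cX$ I have $\dist(x,\cX) = \dist(y,\cX) = 0$. The $(b)$-regularity estimate collapses to $|\langle v, y-x\rangle| \le C\|y-x\|^{1+\eta}$, i.e., \eqref{eq:semismoothsets} for unit $v$.

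For the reverse implication, I would fix $x$ near $\bar x$ (possibly outside $\cX$), $v \in \partial\dist(x,\cX)$, and $y \in \cY$ near $\bar x$. If $x \in \cX$, the estimate is immediate from \eqref{eq:semismoothsets} since $\|v\|\le 1$ and $\dist(x,\cX) = \dist(y,\cX) = 0$. If instead $x \notin \cX$, by \eqref{eq:distfunctionsub} $v$ is a convex combination of unit vectors $v_j = (x-\hat x_j)/\|x-\hat x_j\|$ with $\hat x_j \in P_{\cX}(x)$. The paper recalls that $v_j \in N_{\cX}(\hat x_j)$, and the identity
\[
\dist(x,\cX) + \langle v_j, y - x\rangle \;=\; \langle v_j, y - \hat x_j\rangle
\]
combined with \eqref{eq:semismoothsets} at $\hat x_j$ yields $|\dist(x,\cX) + \langle v_j, y-x\rangle| \le C\|y-\hat x_j\|^{1+\eta}$. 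Since $y \in \cX$ one has $\|x-\hat x_j\| = \dist(x,\cX) \le \|x-y\|$, hence $\|y-\hat x_j\| \le 2\|y-x\|$; moreover $\|\hat x_j - \bar x\| \le 2\|x-\bar x\|$ keeps $\hat x_j$ inside the neighborhood of validity. Averaging over the convex combination and applying the triangle inequality produces the $(b)$-regularity estimate with constant $2^{1+\eta}C$.

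For the three sufficient conditions: \textbf{(i)} When $\cX$ is semialgebraic and $\cY = \{\bar x\}$, the scalar function $\dist(\cdot,\cX)$ and the set-valued map $\partial\dist(\cdot,\cX)$ are both semialgebraic (Tarski--Seidenberg), so Item~\ref{lem:basicexamples:item:semi} of Lemma~\ref{lem:basicexamples} delivers $(b)$-regularity along $\{\bar x\}$ at some exponent $1+\eta$; the forward direction then yields \eqref{eq:semismoothsets}. \textbf{(ii)} When $\cX = \cY$ is a $C^2$ manifold, the manifold fact recorded in the paper gives a decomposition $y - x = t + w$ with $t \in T_{\cX}(x)$ and $\|w\| \le C\|y-x\|^2$; since $N_{\cX}(x) = T_{\cX}(x)^\perp$, Cauchy--Schwarz produces $|\langle v, y-x\rangle| = |\langle v, w\rangle| \le C\|v\|\|y-x\|^2$. \textbf{(iii)} When $\cX$ is a convex cone, testing the inequality $\langle v, x'-x\rangle \le 0$ against $x'=0$ and $x'=2x$ forces $\langle v, x\rangle = 0$; for $y \in \cY = \cX \cap (-\cX)$, testing against $x' = \pm\lambda y$ and letting $\lambda \to \infty$ forces $\langle v, y\rangle = 0$, so $\langle v, y-x\rangle = 0$ and \eqref{eq:semismoothsets} holds trivially.

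I expect the reverse direction of the equivalence to be the main obstacle: one must lift \eqref{eq:semismoothsets} from a projection point $\hat x$ back to an arbitrary exterior point $x$, control the location of $\hat x$ inside the neighborhood of validity, and absorb the discrepancy between $\|y-\hat x\|$ and $\|y-x\|$ into the constant while handling the convex-combination structure of $\partial\dist(x,\cX)$.
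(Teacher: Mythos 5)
Your proof of the biconditional is essentially the paper's argument: use the characterization of $\partial\dist$ from~\eqref{eq:distfunctionsub}, the projection identity $\dist(x,\cX) + \langle v_j, \hat x_j - x\rangle = 0$, the neighborhood control $\|\hat x_j - \bar x\| \leq 2\|x - \bar x\|$, the bound $\|y - \hat x_j\| \leq 2\|y - x\|$, and averaging over the convex hull; the items for semialgebraic sets and $C^2$ manifolds also match (both invoke~\cite{BDL09} through \cref{lem:basicexamples} and the quadratic tangent-space approximation, respectively). The one genuine departure is Item~\ref{lem:bregdistance:cone}: where the paper simply cites~\cite[Proposition 2.3.1]{DDJ21}, you give a self-contained elementary argument showing $\langle v, x\rangle = \langle v, y\rangle = 0$ directly from the normal-cone inequalities by testing $x' \in \{0, 2x, \pm\lambda y\}$, which is a pleasant simplification that makes the cone case immediate.
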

\begin{proof}
Note that whenever the pair is $(b)$-regular, the estimate~\eqref{eq:semismoothsets} trivially follows from~\eqref{eq:distfunctionsub}.
Now we prove that~\eqref{eq:semismoothsets} implies the pair is $(b)$-regular. To that end, let $\delta > 0$  be small enough that the estimate~\eqref{eq:semismoothsets} holds for $x \in B_{\delta}(\bar x) \cap \cX$ and $y \in B_{2\delta}(\bar x) \cap \cY$. 
Let us first suppose that $z \in B_{\delta}(\bar x)\backslash \cX$ and let 
$$
v_z := \frac{z - \hat z}{\| z- \hat z\|} \in \partial \dist(z, \cX), \qquad \text{for some $\hat z \in P_{\cX}(z)$}.
$$
Recall that $v_z \in N_{\cX}(\hat z)$. Thus, since $\hat z \in B_{2\delta}(\bar x),$ we have
\begin{align*}
|\dist(z, \cX) + \dotp{v_z, y - z}| = |\dist(z, \cX) + \dotp{v_z, \hat z - z}| + |\dotp{v_z, y - \hat z}| &\leq C\|y - \hat z\|^{1+\eta}
\end{align*}
since $\dist(z, \cX) + \dotp{v_z, \hat z - z} = 0$. Now observe that 
$$
\|y - \hat z\|^{1+\eta} \leq 2\|y - z\|^{1+\eta} + 2\|z - \hat z\|^{1+\eta} \leq 4\|y - z\|^{1+\eta}.
$$
Consequently,  $(b)$-regularity with subgradient $v_z$ then follows from the bound:
$$
|\dist(z, \cX) + \dotp{v_z, y - z}| \leq 4C\|y - z\|^{1+\eta}.
$$
Since $\partial \dist(z, \cX) = \text{conv}\, \frac{z - P_{\cX}(z)}{\dist(z, \cX)}$, the $(b)$-regularity estimate with arbitrary $v\in \partial \dist(z, \cX)$ follows from averaging the above bound over all possible projections $\hat z$.
Next, let $z \in \cX \cap B_{\delta}(\bar x)$. Then $\partial \dist(z,\cX) = N_{\cX}(z) \cap \closedball$. Thus, the $(b)$-regularity estimate is precisely the estimate~\eqref{eq:semismoothsets}.

We now prove the Items. First note that Item~\ref{lem:bregdistance:semialgebraic} follows from the work~\cite{BDL09} applied to distance functions. Second, Item~\ref{lem:bregdistance:cone} follows from~\cite[Proposition 2.3.1]{DDJ21}. 
Finally, we prove Item~\ref{lem:bregdistance:manifold}. Suppose that $\cX = \cY$ and $ \cX$ is a $C^2$-smooth manifold around $\bar x$. 
Then there exists $C > 0$ such that $y - x \in T_{\cX}(x) + C\|y - x\|^2 \closedball$ for all $x, y \in \cX$ near $\bar x$. Thus, we have $
|\dotp{v, y - x} | \leq C\|v\|\|x - y\|^2  \text{ for all $x, y \in \cX$ near $\bar x$ and $v \in N_{\cX}(x)$,}
$
as desired.
\end{proof}

\subsection{Calculus}
Next we turn our attention to a few basic calculus results. The following theorem develops a chain rule for $(b)$-regularity. 
\begin{thm}[Chain rule]
  Consider two locally Lipschitz mappings $F_1\colon\RR^{d_1}\to\RR^{d_2}$ and $F_2\colon\RR^{d_2}\to\RR^{d_3}$, and define the composition $F = F_2 \circ F_1$.  Consider two locally bounded set-valued mappings $\jac_1 \colon\RR^{d_1} \multo \RR^{d_2 \times d_1}$ and $\jac_2 \colon\RR^{d_2} \multo \RR^{d_3 \times d_2}$ and
  define the composition 
  $$
  \jac(x):= \{ A_2 A_1 \colon A_1 \in \jac_1(x) \text{ and } A_2 \in \jac_2(F_1(x))\}  \qquad \text{ for all $x \in \RR^d$.}
  $$
Fix a set $\cY_2\subset \RR^{d_2}$,  define $\cY_1 :=F_1^{-1}(\cY_2)$, and let $\bar x \in \cY_1$. Suppose that 
\begin{enumerate}
\item  $(F_1, \jac_1)$ is $(b)$-regular along $\cY_1$ at $\bar x$ with exponent $1+\eta_1$.
\item $(F_2, \jac_2)$ is $(b)$-regular along $\cY_2$ at $F_1(\bar x)$ with exponent $1+\eta_2$.
\end{enumerate}
  Then $(F, \jac)$ is $(b)$-regular along $\cY_1$ at $\bar x$ with exponent $1+\min\{\eta_1, \eta_2\}$. 
\end{thm}
\begin{proof}
Let $U$ be a neighborhood of $F_1(\bar x)$ and $C > 0$ be a constant such that 
$$
\|F_2(z') - (F_2(z) +A_2(z'-z))\| \leq C\|z' - z\|^{1+\eta_2}
$$
for all $z \in U$, $A_2 \in \jac_2(z)$, and $z' \in U \cap \cY_2$.
Let $V = F_1^{-1}(U)$ and let $V' \subseteq V$ be a neighborhood of $\bar x$ small enough that there exists $\beta > 0$ with 
\begin{align*}
  \|F_1(y) - (F_1(x) + A_1 (y - x))\| \leq \beta\|x - y\|^{1+\eta_1}
\end{align*}
for all $x \in V'$, $A_1 \in \jac_1(x)$, and $y \in V' \cap \cY_1$.
Now, given $x \in V'$, select any $A_1 \in \jac_1(x)$ and $A_2 \in \jac_2(F_1(x))$. Let $L > 0$ satisfy $L \geq \sup_{A \in \jac_2(F_1(V'))} \opnorm{A}$. In addition, assume that $L$ is a Lipschitz constant for $F_1$ on $V'$. Then for all $x \in V'$ and $y \in V' \cap \cY_1$, we have
\begin{align*}
&|F(y) - (F(x) + A_2A_1(y - x))| \\
&\leq  |F_2(F_1(y)) - (F_2(F_1(x)) + A_2(F_1(y) - F_1(x)))| + \opnorm{A_2} \|F_1(y) - (F_1(x) + A_1 (y - x))\|\\
&\leq C\|F_1(y) - F_1(x)\|^{1+\eta_2} + \beta L\|x-y\|^{1+\eta_1}\\
&\leq CL^{1+\eta_2}\|y - x\|^{1+\eta_2} + \beta L\|x-y\|^{1+\eta_1},
\end{align*}
where the third inequality follows from the inclusions  $x \in V'$, $F_1(x) \in U$, $y \in V'\cap \cY_1$, and $F_1(y) \in U \cap \cY_2$. The proof then follows. 
\end{proof}

The chain rule immediately leads to leads to a sum-rule. The proof is routine, so we omit it.
\begin{corollary}[Sum Rule]\label{cor:bregsum}
Consider locally Lipschitz mappings $F_i \colon \RR^{d} \rightarrow \RR^{m}$, locally bounded set-valued mappings $\jac_i \colon \RR^d \multo \RR^{m \times d}$, and sets $\cY_i\subseteq \RR^{d}$ indexed by a finite set $I$. Define the set $\cY := \bigcap_{i \in I} \cY_i$, the mapping $F = \sum_{i\in I} F_i$, and the mapping
$$
\jac(x) = \left\{\sum_{i\in I} A_i \colon A_i \in \jac_i(x) \text{ for } i \in I \right\} \qquad \text{for all $x \in \RR^d$.}
$$
Suppose that for each $i \in I$, the pair $(F_i, \jac_i)$ is $(b)$-regular along $\cY_i$ at $\bar x$ with exponent $1+\eta_i$. Then
 $(F, \jac)$ is $(b)$-regular along $\cY$ at $\bar x$ with exponent $1+\min_{i\in I}\{\eta_i\}$. 
\end{corollary}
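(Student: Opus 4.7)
The plan is to give a direct calculation rather than route through the chain rule. Although one can formally derive the sum rule by setting $\tilde F_1(x) = (F_i(x))_{i \in I}$ (stacking), $\tilde F_2((y_i)_i) = \sum_{i \in I} y_i$ (a linear, hence trivially $(b)$-regular map with exponent $2$), and then applying the chain rule, doing so requires first verifying that stacking preserves $(b)$-regularity along the intersection $\cY$ — which itself needs essentially the same bookkeeping as the direct argument. So I would just unfold the definitions.

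Let $C_i > 0$ and a neighborhood $U_i$ of $\bar x$ realize the $(b)$-regularity of $(F_i, \jac_i)$ along $\cY_i$ at $\bar x$ with exponent $1+\eta_i$, and set $\eta := \min_{i \in I} \eta_i$. Choose a single neighborhood $U \subseteq \bigcap_{i \in I} U_i$ of $\bar x$ small enough that $\|y - x\| \leq 1$ for all $x, y \in U$. Fix $x \in U$, $y \in \cY \cap U$, and $A \in \jac(x)$; by the definition of $\jac$, we can write $A = \sum_{i \in I} A_i$ with $A_i \in \jac_i(x)$. Since $\cY \subseteq \cY_i$ for every $i$, the point $y$ lies in each $\cY_i$, so the hypothesis applies to each summand. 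The triangle inequality followed by the individual $(b)$-regularity estimates gives
\begin{align*}
\|F(x) + A(y-x) - F(y)\|
&\leq \sum_{i \in I} \|F_i(x) + A_i(y-x) - F_i(y)\| \\
&\leq \sum_{i \in I} C_i \|y-x\|^{1+\eta_i}.
\end{align*}
Because $\|y-x\| \leq 1$ and $\eta \leq \eta_i$ for every $i$, we have $\|y-x\|^{1+\eta_i} \leq \|y-x\|^{1+\eta}$, so the right-hand side is dominated by $\bigl(\sum_{i \in I} C_i\bigr)\|y-x\|^{1+\eta}$, which is precisely the $(b)$-regularity estimate for $(F, \jac)$ along $\cY$ at $\bar x$ with exponent $1+\eta$.

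No real obstacle is anticipated: the argument is pure triangle inequality plus a bound that unifies the exponents, and the only mild care is to shrink the neighborhood so that $\|y-x\| \leq 1$ before absorbing all $1+\eta_i$ into $1+\eta$. Local boundedness of the $\jac_i$ is not needed for this estimate itself; it only enters indirectly to guarantee that the decomposition $A = \sum_i A_i$ makes sense and that the constants $C_i$ are finite on the chosen neighborhood.
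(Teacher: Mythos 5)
Your proof is correct. The paper deliberately omits a proof, remarking only that the sum rule ``immediately'' follows from the chain rule; the intended derivation is the one you sketched and set aside---stack the $F_i$ into $\tilde F(x) = (F_i(x))_{i\in I}$ (which requires the paper's Stacking lemma to conclude $(b)$-regularity of $\tilde F$ along $\cY = \bigcap_i \cY_i$), then compose with the linear summation map and invoke the Chain Rule with exponent $2$ for the outer linear piece. Your direct computation is a genuinely different and, in this case, more economical route: it avoids any reliance on the chain rule or stacking lemma, needs only the triangle inequality, the intersection $\cY \subseteq \cY_i$, and the normalization $\|y - x\| \le 1$ to unify the exponents. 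The modular route buys reusability (once stacking and composition are in hand, the sum rule is a one-liner, as is the ``sum rule for semialgebraic mappings'' the paper alludes to after Corollary~\ref{cor:semib}), whereas your direct route buys self-containedness and makes the origin of the constant $\sum_i C_i$ and the exponent $\min_i \eta_i$ completely transparent. Your closing remark about where local boundedness does and does not enter is a correct and useful clarification that the paper does not spell out.
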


The final result of this section states that $(b)$-regularity is preserved by ``stacking" mappings. The proof is straightforward, so we omit it.
\begin{lemma}[Stacking]
Consider locally Lipschitz mappings $F_1 \colon \RR^{d} \rightarrow \RR^{d_1}$ and $F_2 \colon \RR^{d} \rightarrow \RR^{d_2}$ and define the mapping $F(x) := (F_1(x), F_2(x))$ for all $x \in \RR^d$. Consider two locally bounded set-valued mappings $\jac_1 \colon \RR^d \multo \RR^{d_1 \times d}$ and $\jac_2 \colon \RR^d \multo \RR^{d_2 \times d}$ and define the mapping 
$$
\jac(x) = \left\{\begin{bmatrix} A_1 \\ A_2 \end{bmatrix} \colon A_1 \in \jac_1(x)  \text{ and }  A_2 \in \jac_2(x)\right\} \qquad \text{for all $x \in \RR^d$.}
$$
Fix sets $\cY_1\subseteq \RR^{d}$ and $\cY_2 \subseteq \RR^d$ and define $\cY := \cY_1\cap \cY_2$. Suppose that for $i \in \{1, 2\}$ the pair $(F_i, \jac_i)$ is $(b)$-regular along $\cY_i$ at $\bar x$ with exponent $1+\eta_i$.
Then $(F, \jac)$ is $(b)$-regular along $\cY$ at $\bar x$ with exponent $1+\min\{\eta_1, \eta_2\}$. 
\end{lemma}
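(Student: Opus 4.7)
The plan is to exploit the block structure of both $F$ and $\jac$ so that the residual in the $(b)$-regularity estimate decouples across the two components. Concretely, given $x$ near $\bar x$, an element $A = \begin{bmatrix} A_1 \\ A_2 \end{bmatrix} \in \jac(x)$ with $A_i \in \jac_i(x)$, and $y \in \cY$ near $\bar x$, one has
\[
F(y) - F(x) - A(y-x) \;=\; \bigl(F_1(y) - F_1(x) - A_1(y-x),\; F_2(y) - F_2(x) - A_2(y-x)\bigr),
\]
so that
\[
\|F(y) - F(x) - A(y-x)\|^2 \;=\; \sum_{i=1}^{2}\|F_i(y) - F_i(x) - A_i(y-x)\|^2.
\]

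The next step is to apply the hypothesized $(b)$-regularity of each pair $(F_i,\jac_i)$ separately. The key point is that $\cY = \cY_1 \cap \cY_2$, so any $y \in \cY$ near $\bar x$ automatically lies in both $\cY_1$ and $\cY_2$ near $\bar x$. Choosing a small enough common neighborhood $V$ of $\bar x$ on which both $(b)$-regularity estimates apply, with constants $C_1, C_2 > 0$, I would deduce
\[
\|F_i(y) - F_i(x) - A_i(y-x)\| \;\leq\; C_i \|y-x\|^{1+\eta_i}, \qquad i = 1, 2,
\]
for all $x \in V$, $A_i \in \jac_i(x)$, and $y \in V \cap \cY$.

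Finally, letting $\eta := \min\{\eta_1, \eta_2\}$ and shrinking $V$ if necessary so that $\|y - x\| \leq 1$ for $x, y \in V$, the inequality $\|y-x\|^{1+\eta_i} \leq \|y-x\|^{1+\eta}$ holds for $i=1,2$. Combining this with the two displays above yields
\[
\|F(y) - F(x) - A(y-x)\| \;\leq\; \sqrt{C_1^2 + C_2^2}\; \|y-x\|^{1+\eta},
\]
which is precisely the $(b)$-regularity estimate for $(F, \jac)$ along $\cY$ at $\bar x$ with exponent $1 + \min\{\eta_1, \eta_2\}$. There is no real obstacle here: the argument is a routine decoupling along coordinate blocks together with the trivial fact that the smaller of two exponents dominates for unit-bounded displacements. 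The only mild subtlety is ensuring a single neighborhood on which both individual estimates hold simultaneously, which is handled by intersecting the two neighborhoods furnished by the hypotheses.
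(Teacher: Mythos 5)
Your proof is correct and is precisely the routine argument the paper has in mind when it omits the proof as ``straightforward'': decouple the residual across the two blocks via $\|(a,b)\|^2 = \|a\|^2 + \|b\|^2$, apply each hypothesis on the common set $\cY = \cY_1 \cap \cY_2$, and absorb the two exponents into the minimum after shrinking the neighborhood so that $\|y-x\| \leq 1$. Nothing further is needed.
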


To close this section, we mention that further calculus rules (e.g., preservation under spectral lifts) may be adapted from those in~\cite{DDJ21}.

\subsection{Consequences for semialgebraic mappings}
Given the chain rule and the basic examples of Lemma~\ref{lem:basicexamples}, we have the following immediate consequences for semialgebraic mappings.
\begin{corollary}[Chain-rule with semialgebraic mappings]\label{cor:semib}
Suppose that $F_1 \colon \RR^{d_1} \rightarrow \RR^{d_2}$ and $F_2 \colon \RR^{d_2} \rightarrow \RR^{d_3}$ are locally Lipschitz mappings. Let $\bar x \in \RR^{d_1}$ and define the set $\cY := F_1^{-1}(F_1(\bar x))$. Suppose that 
\begin{enumerate}
\item the mapping $F_2$ is semialgebraic and $\jac_2 \colon \RR^{d_2} \multo \RR^{d_3 \times d_2}$ is a semialgebraic conservative set-valued vector field for $F_2$ (e.g., $\partial F_2$).
\item and that either of the following hold:
\begin{enumerate}
\item near $\bar x$ the mapping $F_1$ is $C^1$ and the Jacobian $\jac_1 := \nabla F_1$ is Lipschitz.
\item there exists a locally bounded set-valued mapping $\jac_1 \colon \RR^{d_1} \multo \RR^{d_2\times d_1}$ such that $(F_1, \jac_1)$ is $(b)$-regular along $\cY$ at $\bar x$ with exponent $1 + \eta_1$.
\end{enumerate}
\end{enumerate}
Define $\jac(x):= \{ A_2 A_1 \colon A_1 \in \jac_1(x) \text{ and } A_2 \in \jac_2(F_1(x))\}$ for all $x \in \RR^d$.
Then there exists $\eta > 0$ such that $(F_2 \circ F_1, \jac)$ is $(b)$-regular along $\cY$ at $\bar x$ with exponent $1+\eta$.
\end{corollary}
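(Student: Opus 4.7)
The plan is to apply the Chain Rule theorem with a carefully chosen target set $\cY_2$ for the outer mapping. The natural choice is $\cY_2 := \{F_1(\bar x)\}$, a singleton, which immediately gives $F_1^{-1}(\cY_2) = F_1^{-1}(F_1(\bar x)) = \cY$. This aligns the intermediate set required by the chain rule with the given set $\cY$.

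With this choice, I would verify the two hypotheses of the chain rule in turn. For $(F_2, \jac_2)$, the set $\cY_2$ is the singleton $\{F_1(\bar x)\}$, the mapping $F_2$ is semialgebraic, and $\jac_2$ is a semialgebraic conservative set-valued vector field for $F_2$. Hence Item~\ref{lem:basicexamples:item:semi} of Lemma~\ref{lem:basicexamples} yields some $\eta_2 > 0$ such that $(F_2, \jac_2)$ is $(b)$-regular along $\cY_2 = \{F_1(\bar x)\}$ at $F_1(\bar x)$ with exponent $1+\eta_2$. For $(F_1, \jac_1)$, I would treat the two alternative hypotheses separately: under (a), Item~\ref{lem:basicexamples:item:c1} of Lemma~\ref{lem:basicexamples} gives $(b)$-regularity of $(F_1, \nabla F_1)$ along any set through $\bar x$ with exponent $2$ (so $\eta_1 = 1$); under (b) this is exactly the assumption, with exponent $1+\eta_1$.

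Having both $(b)$-regularity hypotheses in place, the Chain Rule theorem applies directly and yields that $(F_2 \circ F_1, \jac)$ is $(b)$-regular along $F_1^{-1}(\cY_2) = \cY$ at $\bar x$ with exponent $1+\min\{\eta_1,\eta_2\}$. Setting $\eta := \min\{\eta_1, \eta_2\} > 0$ concludes the proof.

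There is no substantial obstacle here — this corollary is essentially a bookkeeping combination of the chain rule with the semialgebraic example from Lemma~\ref{lem:basicexamples}. The only point that requires a moment of care is recognizing that choosing $\cY_2$ to be the singleton $\{F_1(\bar x)\}$ both makes the outer regularity available (since Item~\ref{lem:basicexamples:item:semi} is stated for singleton reference sets) and matches the preimage constraint $F_1^{-1}(\cY_2) = \cY$ demanded by the chain rule.
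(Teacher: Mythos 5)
Your proposal is correct and matches the approach the paper implicitly uses (the paper leaves the corollary unproved, prefacing it with ``Given the chain rule and the basic examples of Lemma~\ref{lem:basicexamples}, we have the following immediate consequences''). Choosing $\cY_2 = \{F_1(\bar x)\}$ so that $F_1^{-1}(\cY_2) = \cY$, invoking Item~\ref{lem:basicexamples:item:semi} of Lemma~\ref{lem:basicexamples} for the outer mapping, and Item~\ref{lem:basicexamples:item:c1} or the hypothesis for the inner mapping, is exactly the intended bookkeeping.
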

As in~\cref{cor:bregsum}, the result of Corollary~\ref{cor:semib} leads to a ``sum rule" for semialgebraic mappings, whose details are immediate.

\subsection{Consequences for functions}\label{sec:bregfunctions}

In this section, we prove the claims of Section~\ref{sec:examplesofbregintro}.

\subsubsection{Proof of Proposition~\ref{prop:basicsemialgebraic}.}

The result follows from Corollary~\ref{cor:semib} with mappings $F_2 := h$ and  $F_1 := F(x)$.

\subsubsection{Proof of Proposition~\ref{prop:rootfinding}.}

Define the mapping $F = F_2 \circ F_1$ and $\jac(x) := \{ A_2 \nabla F_1(x) \colon  \text{ and } A_2 \in \partial F_2(F_1(x))\}$.
By Corollary~\ref{cor:semib}, the pair $(F,\jac)$ is $(b)$-regular along $\cY$ at $\bar x$ with exponent $1+\eta$.
Applying Corollary~\ref{cor:semib} again to the composition $\| F(x)\|$ gives the result.

\subsubsection{Proof of Proposition~\ref{prop:semialgebraicfeasibility}.}

Recall that $g_i(x) \in \partial \dist(x, \cX_i)$. Consequently, by Lemma~\ref{lem:basicexamples} there exists $\eta_i > 0$ such that the semialgebraic mappings $(\dist(x, \cX_i), g_i^\T)$ are $(b)$-regular along $\{\bar x\}$ at $\bar x$ with exponent $1+\eta_i$ for $i \in I$. Therefore, the result follows by Corollary~\ref{cor:bregsum}, as desired.

\section{Proofs of the main algorithmic results}\label{sec:proofofalgsresults}

Throughout this section, we assume that \ref{item:assumption:main:sharpness} and~\ref{item:assumption:main:b} are in force.
We begin with a few lemmata that will reappear in several proofs.
The first Lemma ensures we can use
the $(b)$-regularity estimate with $y = P_{\cX_\ast}(x)$. The proof is straightforward, so we
omit it.
\begin{lemma}\label{lem:distanceincrease}
Let $\delta' > 0$ and let $y \in \cX_{\ast}$. We have
\begin{align*}
x\in B_{\delta'}(y) \text{ and }  \hat x \in P_{\cX_\ast}(x)  \implies \hat x\in B_{2\delta'}(y).
\end{align*}
\end{lemma}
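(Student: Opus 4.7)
The plan is straightforward and uses only the definition of metric projection together with the triangle inequality. Since $y \in \cX_\ast$ is a feasible candidate for the projection of $x$ onto $\cX_\ast$, the optimality of $\hat x$ yields $\|x - \hat x\| \leq \|x - y\|$. Combined with the hypothesis $x \in B_{\delta'}(y)$, this gives $\|x - \hat x\| < \delta'$.

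Next I would simply apply the triangle inequality:
\[
\|\hat x - y\| \leq \|\hat x - x\| + \|x - y\| < \delta' + \delta' = 2\delta',
\]
which is exactly the conclusion $\hat x \in B_{2\delta'}(y)$.

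There is no real obstacle here: the only subtlety is to note that $y \in \cX_\ast$ is used twice — first as a feasible point witnessing that the projection distance $\|x - \hat x\|$ is at most $\|x - y\|$, and second as the reference center of the ball — and nothing about the geometry of $\cX_\ast$ (closedness, convexity, etc.) is actually needed beyond the fact that $P_{\cX_\ast}(x) \subseteq \cX_\ast$.
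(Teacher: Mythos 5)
Your proof is correct and is the obvious argument the paper has in mind when it says the proof is "straightforward, so we omit it": use $y \in \cX_\ast$ as a competitor in the projection to get $\|x - \hat x\| \le \|x - y\| < \delta'$, then apply the triangle inequality. Nothing more is needed.
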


The next property is fundamental to the convergence of the $\polyak$ and $\bundle$ procedures.
It states that negative subgradients aim towards the set of minimizers.
\begin{lemma}[Aiming]
  \label{lemma:aiming-inequality}
  Fix a point $x$ satisfying the bound $\norm{x - \bar{x}} \leq \delta$, as well as $\dist(x, \cX_{\ast})
  \leq (\frac{\mu}{2\constb})^{1/\eta}$.
  Then
  \begin{align}\label{eq:aiming}
    \dotp{v, x - \hat x} \geq \frac{\mu}{2}\cdot \dist(x, \cX_\ast) \qquad \text{for all $v \in \subg(x)$ and $\hat x \in P_{\cX_\ast}(x)$.}
  \end{align}
In particular, if $x \notin \cX_\ast$, the bound $\|v\| \geq \mu/2$ holds for all $v \in \subg(x)$.
\end{lemma}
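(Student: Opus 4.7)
The plan is to combine the sharpness bound with the $(b)$-regularity bound applied at $y = \hat{x}$, after first confirming that $\hat{x}$ lies in the neighborhood on which $(b)$-regularity is guaranteed. Specifically, \cref{lem:distanceincrease} applied with $y = \bar{x}$ and $\delta' = \delta$ shows that any $\hat{x} \in P_{\cX_\ast}(x)$ satisfies $\hat{x} \in \cX_\ast \cap B_{2\delta}(\bar{x})$, so that Assumption~\ref{item:assumption:main:b} is applicable to the pair $(x, \hat{x})$.

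Next, I would take an arbitrary $v \in \subg(x)$ and write the one-sided version of the $(b)$-regularity inequality
\[
f(x) + \dotp{v, \hat{x} - x} - f^\ast \;\geq\; -\constb \, \|\hat{x} - x\|^{1+\eta},
\]
rearranging it to lower bound $\dotp{v, x - \hat{x}}$ by $f(x) - f^\ast - \constb\, \|x - \hat{x}\|^{1+\eta}$. Substituting the sharpness estimate $f(x) - f^\ast \geq \mu \, \dist(x, \cX_\ast) = \mu \, \|x - \hat{x}\|$ (using that $\hat{x}$ realizes the distance) yields
\[
\dotp{v, x - \hat{x}} \;\geq\; \|x - \hat{x}\| \left(\mu - \constb \, \|x - \hat{x}\|^{\eta}\right).
\]
The hypothesis $\dist(x, \cX_\ast) \leq (\mu/(2\constb))^{1/\eta}$ gives $\constb\,\|x-\hat{x}\|^\eta \leq \mu/2$, so the parenthetical factor is at least $\mu/2$, producing the aiming inequality~\eqref{eq:aiming}.

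The ``in particular'' statement is then immediate by Cauchy--Schwarz: when $x \notin \cX_\ast$, the distance $\|x - \hat{x}\|$ is strictly positive, so dividing $\|v\|\,\|x-\hat{x}\| \geq \dotp{v, x - \hat{x}} \geq (\mu/2)\,\|x - \hat{x}\|$ through by $\|x - \hat{x}\|$ yields $\|v\| \geq \mu/2$. There is no real obstacle here — the argument is a one-line combination of the two main assumptions — and the only mild subtlety is the bookkeeping to ensure $\hat{x}$ lies in the ball $B_{2\delta}(\bar{x})$ where the $(b)$-regularity estimate is valid, which \cref{lem:distanceincrease} delivers with the correct factor of $2$.
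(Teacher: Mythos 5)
Your proof is correct and is essentially identical to the paper's argument: both combine the one-sided $(b)$-regularity bound at $y = \hat{x}$ with the sharpness estimate and then absorb the $\constb\|x-\hat x\|^{1+\eta}$ term using the hypothesis on $\dist(x,\cX_\ast)$. The only difference is cosmetic — you explicitly invoke \cref{lem:distanceincrease} to place $\hat x$ in $B_{2\delta}(\bar x)$ and spell out the Cauchy--Schwarz step for the ``in particular'' clause, both of which the paper leaves implicit.
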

\begin{proof}
By $(b)$-regularity and sharp growth, we have
$$
\constb\|x - \hat x \|^{1+\eta} + \dotp{v, x - \hat x} \geq f(x) - f(\hat x) \geq \mu \, \|x - \hat x\|.
$$
Rearranging gives~\eqref{eq:aiming}. The lower bound $\|v\| \geq \mu/2$ is immediate.
\end{proof}

Finally we derive the main consequence of~\ref{item:assumption:main:fallback} that is used in this work. The following lemma shows that when iterated, algorithmic mappings generate iterates with two desirable properties: they do not travel far from $\bar x$ and they linearly converge to $\cX_\ast$.
\begin{lemma}
  \label[lemma]{lemma:fallback-stay-in-ball}
  Let $\cA$ satisfy~\ref{item:assumption:main:fallback}. Fix a point $x \in \RR^d$ satisfying
  \[
    \norm{x - \bar{x}} < \frac{(1-\rho)\radius_1}{2} \qquad \text{ and } \qquad
    \dist(x, \cX_{\ast}) < \radius_2.
  \]
  Define $z_{-1} := x$ and for all $\inner \geq 0$, define $z_{\inner} :=
  \cA^{\circ \inner}(x)$. Then for all $\inner \geq 0$ and $\hat z_{i} \in P_{\cX_\ast}(z_{i})$, we have
  \[
    \|z_{\inner} - \bar x\| < \radius_1 \qquad \text{ and } \qquad
    \dist(z_{\inner}, \cX_{\ast}) \leq \|z_\inner - \hat z_{\inner-1}\| \leq \rho^{\inner} \, \dist(z_0, \cX_{\ast}).
  \]
\end{lemma}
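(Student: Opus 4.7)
The plan is to prove both claims simultaneously by induction on $i$, using~\ref{item:assumption:main:fallback} as the engine and a telescoping argument to control the displacement from $\bar x$.

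For the base case $i=0$, the identity $z_0 = z_{-1} = x$ immediately gives $\|z_0 - \hat z_{-1}\| = \dist(z_0, \cX_\ast) = \rho^0 \dist(z_0, \cX_\ast)$, and $\|z_0 - \bar x\| < (1-\rho)\radius_1/2 < \radius_1$ is part of the hypothesis. For the inductive step, assume both conclusions hold at indices $0, 1, \ldots, i$. The first task is to check that~\ref{item:assumption:main:fallback} is applicable at $z_i$: the radius condition $\|z_i - \bar x\| < \radius_1$ is the inductive hypothesis, and the distance condition $\dist(z_i, \cX_\ast) < \radius_2$ follows from the inductive bound $\dist(z_i, \cX_\ast) \leq \rho^i \dist(z_0, \cX_\ast) \leq \dist(x, \cX_\ast) < \radius_2$. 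Provided $z_i \notin \cX_\ast$ (else the iteration has already reached the solution set and the claim is trivial), \ref{item:assumption:main:fallback} produces
\[
\|z_{i+1} - \hat z_i\| \leq \rho\, \dist(z_i, \cX_\ast) \leq \rho^{i+1} \dist(z_0, \cX_\ast),
\]
and $\dist(z_{i+1}, \cX_\ast) \leq \|z_{i+1} - \hat z_i\|$ is immediate since $\hat z_i \in \cX_\ast$.

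It remains to bound $\|z_{i+1} - \bar x\|$. I would telescope along the trajectory: for each $0 \leq j \leq i$, the triangle inequality through any projection $\hat z_j$ yields
\[
\|z_{j+1} - z_j\| \leq \|z_{j+1} - \hat z_j\| + \|z_j - \hat z_j\| \leq (1+\rho)\, \dist(z_j, \cX_\ast) \leq (1+\rho)\rho^j \dist(z_0, \cX_\ast),
\]
where the per-step contraction comes from already-established inductive consequences of~\ref{item:assumption:main:fallback}. Summing the geometric series and using $\dist(z_0, \cX_\ast) \leq \|z_0 - \bar x\| < (1-\rho)\radius_1/2$,
\[
\|z_{i+1} - z_0\| \leq \sum_{j=0}^{i}(1+\rho)\rho^j \dist(z_0, \cX_\ast) \leq \frac{1+\rho}{1-\rho}\cdot \frac{(1-\rho)\radius_1}{2} = \frac{(1+\rho)\radius_1}{2}.
\]
Combining with the base displacement $\|z_0 - \bar x\| < (1-\rho)\radius_1/2$ gives $\|z_{i+1} - \bar x\| < \radius_1$, closing the induction.

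The proof is essentially bookkeeping; the only subtlety is that the hypothesis $\|x - \bar x\| < (1-\rho)\radius_1/2$ is precisely calibrated so that the geometric sum $(1+\rho)\dist(z_0,\cX_\ast)/(1-\rho)$ of per-step travel plus the initial displacement fits inside $\radius_1$. The main obstacle, such as it is, lies in verifying that~\ref{item:assumption:main:fallback} can be applied at \emph{every} intermediate point $z_j$, which forces the combined induction on both conclusions rather than proving them in sequence.
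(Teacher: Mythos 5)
Your proof is correct and follows essentially the same route as the paper: a joint induction in which~\ref{item:assumption:main:fallback} supplies the per-step contraction $\|z_{j+1}-\hat z_j\|\le\rho\,\dist(z_j,\cX_\ast)$ and a geometric sum of the per-step travel $(1+\rho)\rho^j\dist(z_0,\cX_\ast)$ keeps the iterates inside $B_{\radius_1}(\bar x)$. The only cosmetic difference is that the paper carries the cumulative bound $\|z_i-\bar x\|\le\|z_0-\bar x\|\bigl(1+(1+\rho)\sum_{j=0}^{i-1}\rho^j\bigr)$ as an explicit inductive invariant, whereas you telescope $\|z_{i+1}-z_0\|$ at the end; the two computations are identical.
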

\begin{proof}
  Assume without loss of generality that $f^{\ast} = 0$ and define $z_{-1} = z_{0}$.
  We show that the following holds for all $\inner \ge 0$:
  \begin{subequations}
    \begin{align}
      \norm{z_{\inner} - \bar{x}} &\leq \norm{z_0 - \bar{x}} \left(
        1 + (1 + \rho) \sum_{j=0}^{\inner-1} \rho^j \right) < \radius_1,
        \label{eq:fallback-near-xbar} \\
      \norm{z_{\inner} - \hat{z}_{\inner-1}} &\leq \rho^{\inner} \norm{z_0 - \hat{z}_0}.
      \label{eq:fallback-dist-reduction}
    \end{align}
  \end{subequations}
  The base case follows trivially. Now, assume the bounds hold up to some index $\inner$.
  Then,~\eqref{eq:fallback-dist-reduction} ensures $\dist(z_i, \cX_{\ast}) \leq
  \dist(z_0, \cX_{\ast}) < \radius_2$. Therefore, by~\ref{item:assumption:main:fallback},
  we have
  \[
    \norm{z_{\inner+1} - \hat{z}_{\inner}} \leq \rho \norm{z_{\inner} - \hat{z}_{\inner}}
    \leq \rho^{\inner+1} \, \norm{z_0 - \hat{z}_0},
  \]
  which proves~\eqref{eq:fallback-dist-reduction}. Finally, we have
  \begin{align*}
    \norm{z_{\inner+1} - \bar{x}} &\leq \norm{z_{\inner+1} - \hat{z}_{\inner}} +
      \norm{z_{\inner} - \hat{z}_{\inner}} + \norm{z_{\inner} - \bar{x}} \\
                                  &\leq
      (1 + \rho)\norm{z_{\inner} - \hat{z}_{\inner}} +
      \norm{z_0 - \bar{x}} \left(1 + (1 + \rho) \sum_{j=0}^{\inner-1} \rho^j\right) \\
                                  &\leq
      (1 + \rho) \rho^{\inner} \norm{z_0 - \hat{z}_{0}} +
      \norm{z_0 - \bar{x}} \left(1 + (1 + \rho) \sum_{j=0}^{\inner-1} \rho^j\right) \\
                                  &\leq
      \norm{z_0 - \bar{x}} \left(1 + (1 + \rho) \sum_{j=0}^{\inner} \rho^j\right) < \radius_1,
  \end{align*}
  where the penultimate inequality follows from the bound $\norm{z_0 - \hat{z}_0}$ and
  the last inequality follows from $\norm{z_0 - \bar{x}} \leq (1 - \rho) \radius_1 / 2$.
  This proves~\eqref{eq:fallback-near-xbar} and completes the proof.
\end{proof}

\subsection{Proof of~\cref{thm:polyak}}\label{sec:thm:polyak}

The following lemma proves Theorem~\ref{thm:polyak}.
\begin{lemma}[One step improvement]
\label{lemma:one-step-improvement}
Let $L$ be an upper bound for the maximal norm element of
$\subg(B_{\delta}(\bar x))$.
Let $s \colon \RR^d
  \rightarrow \RR^d$ satisfy $s(x) \in \subg(x)$ for all $x \in \RR^d$. Define
  the mapping
  \[
    \cA(x) := \begin{cases}
    x - \frac{f(x) - f^{\ast}}{\norm{s(x)}^2} s(x) & \text{if $s(x) \neq  0$;}\\
    x &\text{otherwise.}
    \end{cases}
  \]
  Then $\cA$ satisfies~\ref{item:assumption:main:fallback} with
  \[
    \radius_1 = \delta; \quad
    \radius_2 = \left(\frac{\mu}{4 \constb}\right)^{1/\eta};
    \quad \rho = \sqrt{1 - \frac{\mu^2}{4L^2}}.
  \]
Consequently, Theorem~\ref{thm:polyak} holds.
\end{lemma}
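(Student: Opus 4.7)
The plan is to verify the algorithmic-mapping property~\ref{item:assumption:main:fallback} for $\cA$ with the stated constants, and then read off the iteration count for $\polyak$ by chaining this contraction with the linear-convergence bookkeeping in~\cref{lemma:fallback-stay-in-ball}.

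Fix $x$ with $\norm{x-\bar x}<\delta$ and $\dist(x,\cX_\ast)<(\mu/(4\constb))^{1/\eta}$, let $\hat x\in P_{\cX_\ast}(x)$, and write $v:=s(x)$. By~\cref{lem:distanceincrease} we have $\hat x\in B_{2\delta}(\bar x)$, so both~\ref{item:assumption:main:b} with $y=\hat x$ and the aiming inequality~\cref{lemma:aiming-inequality} are available. The case $v=0$ is harmless: by the final clause of~\cref{lemma:aiming-inequality}, $v=0$ forces $x\in\cX_\ast$, so the contraction inequality is vacuous.

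The core of the argument is a one-line calculation. Setting $\gamma := (f(x)-f^\ast)/\norm{v}^2$, expand
\begin{equation*}
\|\cA(x)-\hat x\|^{2} \;=\; \|x-\hat x\|^{2} \;-\; 2\gamma \dotp{v, x-\hat x} \;+\; \gamma^{2}\|v\|^{2}.
\end{equation*}
I would bound $2\gamma\dotp{v,x-\hat x}$ by splitting it into two identical copies: apply the aiming inequality to one copy to get $\gamma\dotp{v,x-\hat x}\ge \gamma(\mu/2)\|x-\hat x\|$, and apply the $(b)$-regularity estimate to the other to get $\gamma\dotp{v,x-\hat x}\ge \gamma(f(x)-f^\ast)-\gamma\constb\|x-\hat x\|^{1+\eta}$. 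The hypothesis $\dist(x,\cX_\ast)\le (\mu/(4\constb))^{1/\eta}$ together with sharpness gives $\constb\|x-\hat x\|^{1+\eta}\le (\mu/4)\|x-\hat x\|$, so the $\constb$-slack is absorbed into a fraction of the aiming term. Using $\gamma^{2}\|v\|^{2}=\gamma(f(x)-f^\ast)$, the $\gamma(f(x)-f^\ast)$ contributions cancel and we are left with
\begin{equation*}
\|\cA(x)-\hat x\|^{2} \;\le\; \|x-\hat x\|^{2} \;-\; \frac{\mu (f(x)-f^\ast)\|x-\hat x\|}{4\|v\|^{2}}.
\end{equation*}
Applying sharpness a second time to replace $f(x)-f^\ast$ by $\mu\|x-\hat x\|$ and using $\|v\|\le L$ yields the contraction $\|\cA(x)-\hat x\|^{2}\le (1-\mu^{2}/(4L^{2}))\|x-\hat x\|^{2}$, which is exactly~\ref{item:assumption:main:fallback} with the advertised $\radius_1,\radius_2,\rho$.

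For the second assertion (Theorem~\ref{thm:polyak}), the iterates of $\polyak$ are generated by iterating $\cA$, so~\cref{lemma:fallback-stay-in-ball} applies under the initial-condition hypotheses of the theorem and gives $\dist(z_i,\cX_\ast)\le \rho^{i}\dist(z_0,\cX_\ast)$ with the iterates staying in $B_{\delta}(\bar x)$. To convert this into a function-gap bound I would use $(b)$-regularity with $y=\hat z_i$ and the bound $\|v\|\le L$ to obtain $f(z_i)-f^\ast\le (L+\constb\|z_i-\hat z_i\|^{\eta})\|z_i-\hat z_i\|\le 2L\dist(z_i,\cX_\ast)$ (using $\constb\|\cdot\|^{\eta}\le \mu/4\le L/2$ throughout the region), and then sharpness to turn $\dist(z_0,\cX_\ast)$ into $(f(z_0)-f^\ast)/\mu$. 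The stated count $\lceil 8\kappa^{2}\log(\kappa(f(x)-f^\ast)/\epsilon)\rceil$ then follows from the standard estimate $\log(1/\rho)\ge 1/(8\kappa^{2})$ obtained from $-\log(1-t)\ge t$ applied at $t=(2\kappa)^{-2}$.

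The only nontrivial step is the absorption of the $(b)$-regularity slack $\constb\|x-\hat x\|^{1+\eta}$ into sharpness-derived quantities; the distance threshold $(\mu/(4\constb))^{1/\eta}$ is calibrated precisely so that this slack is at most a $1/4$ fraction of both $\mu\|x-\hat x\|$ and $f(x)-f^\ast$, which is what makes the split-and-cancel in the main computation succeed.
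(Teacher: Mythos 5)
Your argument is correct and is essentially the same as the paper's: the "split into two identical copies" of the cross term recreates, line for line, the paper's regrouping $\|\cA(x)-\hat x\|^2 = \|x-\hat x\|^2 + \tfrac{f(x)}{\|v\|^2}\bigl(f(x)+\ip{v,\hat x-x}\bigr) - \tfrac{f(x)}{\|v\|^2}\ip{v,x-\hat x}$, with $(b)$-regularity and the aiming inequality applied to exactly the same two pieces and the distance threshold used to absorb the $\constb$-slack in the same way. Your fill-in of the iteration count for Theorem~\ref{thm:polyak} is a harmless elaboration of a step the paper leaves implicit; the only mismatch is a factor of $2$ inside the logarithm coming from your bound $f(z_i)-f^\ast\le 2L\,\dist(z_i,\cX_\ast)$ versus the paper's $\kappa$, which is an innocuous constant-level discrepancy in a step the paper does not spell out.
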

\begin{proof}
Assume without loss of generality that $f^\ast = 0$.
Fix a point $x \notin \cX$ satisfying  the bounds $\norm{x - \bar{x}} < \radius_1$ and $
      \dist(x, \cX_{\ast}) < \radius_2$.
      Notice that Lemma~\ref{lemma:aiming-inequality} guarantees that $s(x) \neq 0$.
Choose $\hat x \in P_{\cX_\ast}(x)$ and observe that
 \begin{align*}
    \norm{\cA(x) - \hat{x}}^2 &=
    \norm{x - \hat{x}}^2 + \frac{f(x)^2}{\norm{v}^2} -
    2 \frac{f(x)}{\norm{v}^2} \ip{v, x - \hat{x}} \\
                           &=
    \norm{x - \hat{x}}^2 + \frac{f(x)}{\norm{v}^2}
    \left(f(x) + \ip{v, \hat{x} - x}\right)
    - \frac{f(x)}{\norm{v}^2} \ip{v, x - \hat{x}} \\
                           &\leq
    \norm{x - \bar{x}}^2 + \frac{f(x)}{\norm{v}^2}
    \left( \constb \norm{x - \hat{x}}^{1 + \eta} - \frac{\mu}{2} \norm{x - \hat{x}}\right) \\
                           &\leq
    \norm{x - \hat{x}}^2 - \frac{f(x)}{\norm{v}^2} \cdot \frac{\mu}{4} \norm{x - \hat{x}} \\
                           &\leq
                           \norm{x - \hat{x}}^2\left(1 - \frac{\mu^2}{4\|v\|^2} \right).
  \end{align*}
 where the first inequality follows from $(b)$-regularity and~\cref{lemma:aiming-inequality},
 the second inequality follows from the assumed bound for $\dist(x, \cX_{\ast})$, and
 the last inequality follows from sharpness.
 Thus, $\cA$ satisfies~\ref{item:assumption:main:fallback}. Consequently, by Lemma~\ref{lemma:fallback-stay-in-ball}, Theorem~\ref{thm:polyak} holds.
\end{proof}

\subsection{Proof of Theorem~\ref{corollary:function-value-reduction}}\label{proof:corollary:function-value-reduction}

We first establish some assumptions and notation.
Without loss of generality we assume $f^\ast = 0$.
We let $\{y_\inner\}_{\inner}$ and $\{v_\inner\}_{\inner}$ denote the iterates and generalized gradients generated by Algorithm~\ref{alg:build-bundle-method}.
We let
$$
\hat y_{\inner} \in P_{\cX_\ast}(y_{\inner}) \qquad \text{for $\inner = 0, \ldots, d$}
$$
denote projections of the iterates onto $\cX_\ast$. Note that whenever $y_{\inner}
\in B_{\delta/2}(\bar x)$, we have $\hat y_{\inner} \in B_{\delta}(\bar x)$ (see Lemma~\ref{lem:distanceincrease}).

We now turn to several technical Lemmas and Propositions.
The first proposition is proved in Appendix~\ref{sec:build-bundle-missing-proofs}.
It will help us ensure that $\bundle$ terminates after at most $d$ iterations.% since the rank of the matrix $A_k$ grows with $k$.
\begin{proposition}
  \label[proposition]{proposition:build-bundle-sval-lower-bound}
  Fix $\inner \geq 1$ and suppose that $\norm{P_{\ker(A_j)} v_j} \ge \alpha > 0$ for all $j \le \inner$.
  Then the following holds:
  \begin{equation*}
    \rank(A_{\inner+1}) = \inner+1 \quad
    \text{and} \quad
    \sigma_{\inner+1}(A_{\inner+1}) \geq
    \min\set{\opnorm{A_0}, 1} \left(\frac{\alpha}{L \sqrt{2}}\right)^{\inner}.
  \end{equation*}
\end{proposition}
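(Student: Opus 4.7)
The plan is to reduce both conclusions (the rank statement and the singular-value lower bound) to a single determinantal identity for the Gram matrix $A_{\inner+1} A_{\inner+1}^{\T}$.

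First I would establish, by induction on $\inner$, the identity
\[
\det\bigl(A_{\inner+1} A_{\inner+1}^{\T}\bigr) = \|v_0\|^2 \prod_{j=1}^{\inner} \bigl\|P_{\ker(A_j)} v_j\bigr\|^2.
\]
The inductive step writes $A_{\inner+1}$ as the row-stack of $A_{\inner}$ and $v_{\inner}^{\T}$ and invokes the Schur complement: assuming $A_{\inner} A_{\inner}^{\T}$ is invertible by induction, one has
\[
\det\bigl(A_{\inner+1} A_{\inner+1}^{\T}\bigr) = \det\bigl(A_{\inner} A_{\inner}^{\T}\bigr) \cdot \Bigl(\|v_{\inner}\|^2 - v_{\inner}^{\T} A_{\inner}^{\T}\bigl(A_{\inner} A_{\inner}^{\T}\bigr)^{-1} A_{\inner} v_{\inner}\Bigr).
\]
The parenthesized residual equals $\|P_{\ker(A_{\inner})} v_{\inner}\|^2$, since $A_{\inner}^{\T}(A_{\inner} A_{\inner}^{\T})^{-1} A_{\inner}$ is the orthogonal projector onto the row space of $A_{\inner}$, and $\ker(A_{\inner})$ is exactly the orthogonal complement of that row space. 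Plugging in the hypothesis $\|P_{\ker(A_j)} v_j\| \geq \alpha$ for $1 \leq j \leq \inner$ then gives $\det(A_{\inner+1} A_{\inner+1}^{\T}) \geq \|v_0\|^2 \alpha^{2\inner} > 0$, which in particular certifies that $A_{\inner+1}$ has full row rank $\inner+1$ and settles the first assertion.

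For the singular-value lower bound, I would combine $\det(A_{\inner+1} A_{\inner+1}^{\T}) = \prod_{j=1}^{\inner+1} \sigma_j(A_{\inner+1})^2$ with an upper bound on the product of the top $\inner$ singular values. Specifically, since each row of $A_{\inner+1}$ is some $v_j$ with $\|v_j\| \leq L$, one has $\|A_{\inner+1}\|_F^2 \leq (\inner+1) L^2$, and AM-GM yields
\[
\prod_{j=1}^{\inner} \sigma_j(A_{\inner+1})^2 \leq \left(\frac{1}{\inner}\sum_{j=1}^{\inner} \sigma_j(A_{\inner+1})^2\right)^{\inner} \leq \left(\frac{(\inner+1) L^2}{\inner}\right)^{\inner} \leq \bigl(2 L^2\bigr)^{\inner},
\]
using $(\inner+1)/\inner \leq 2$ for $\inner \geq 1$. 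Dividing the determinantal lower bound by this gives $\sigma_{\inner+1}(A_{\inner+1}) \geq \|v_0\| \, (\alpha/(L\sqrt{2}))^{\inner}$, which matches the claimed estimate after identifying $\|v_0\|$ with $\|A_0\|_{\mathrm{op}}$ and using $\|v_0\| \geq \min\{\|v_0\|, 1\}$.

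The one step that genuinely requires care is the Schur complement identification of the residual $\|v_{\inner}\|^2 - v_{\inner}^{\T} A_{\inner}^{\T}(A_{\inner} A_{\inner}^{\T})^{-1} A_{\inner} v_{\inner}$ with $\|P_{\ker(A_{\inner})} v_{\inner}\|^2$, together with correctly tracking that this is the only place where the assumption is actually used; everything else is bookkeeping with elementary linear-algebraic inequalities.
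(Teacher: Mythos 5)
Your proposal is correct, but it proves the proposition by a genuinely different route than the paper. The paper's proof isolates a one-step perturbation lemma: for a rank-$k$ matrix $A$ and a row $v^{\T}$ with $\|P_{\ker(A)}v\|>\alpha$, it lower-bounds $\sigma_{k+1}\bigl(\bigl[\begin{smallmatrix}A\\ v^{\T}\end{smallmatrix}\bigr]\bigr)$ by $\tfrac{\alpha}{\sqrt{2}}\min\{1,\sigma_k(A)/L\}$ using the Davis--Kahan theorem (to compare the projectors $ww^{\T}$ and $\bar v\bar v^{\T}$) together with Weyl's inequality, and then iterates this bound $\inner$ times. You instead compute the Gram determinant $\det(A_{\inner+1}A_{\inner+1}^{\T})=\|v_0\|^2\prod_{j=1}^{\inner}\|P_{\ker(A_j)}v_j\|^2$ exactly via Schur complements (the residual identification with $\|P_{\ker(A_j)}v_j\|^2$ is correct, since $A_j^{\T}(A_jA_j^{\T})^{-1}A_j$ is the projector onto $\mathrm{row}(A_j)=\ker(A_j)^{\perp}$ once full row rank is known inductively), and then divide out the top $\inner$ singular values using AM--GM and the Frobenius bound $\|A_{\inner+1}\|_F^2\le(\inner+1)L^2$; the estimate $\bigl(\tfrac{\inner+1}{\inner}\bigr)^{\inner}\le 2^{\inner}$ checks out for all $\inner\ge 1$. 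Your approach is more elementary (no eigenvalue perturbation theory) and in fact yields the marginally sharper constant $\|v_0\|=\opnorm{A_0}$ in place of $\min\{\opnorm{A_0},1\}$, while the paper's one-step lemma has the advantage of being reusable as a standalone incremental bound. Both arguments share the same implicit requirement that $v_0\neq 0$ (equivalently $\opnorm{A_0}>0$) to seed the induction, and both use the standing bound $\|v_j\|\le L$ from the surrounding context, so neither of these is a gap particular to your write-up.
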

We proceed with several technical Lemmas, whose proofs appear inline. First we show the
following decomposition of $y_\inner - \hat{y}_0$, which we use repeatedly in the below.
\begin{lemma}
  \label[lemma]{lemma:build-bundle-error-decomposition}
  For all $i \geq 1$, the following identity holds:
  \begin{equation*}
    y_{\inner} - \hat{y}_0 = \proj_{\ker(A_{\inner})}(y_0 - \hat{y}_0)
    - A_{\inner}^{\dag} \bmx{f(y_j) + \ip{v_j, \hat{y}_0 - y_j}}_{j=0}^\inner.
     \end{equation*}
\end{lemma}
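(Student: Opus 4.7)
The plan is to prove the identity by direct algebraic manipulation starting from the update rule in Algorithm~\ref{alg:build-bundle-method} and using a standard orthogonal decomposition of $y_0-\hat{y}_0$ with respect to $\ker(A_\inner)$. Concretely, I will interpret the upper index on the right-hand side as $\inner-1$, matching the $\inner$ rows of $A_\inner$ (i.e., $A_\inner = [v_0,\dots,v_{\inner-1}]^\T$).

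First, I would unfold the update rule and subtract $\hat{y}_0$ from both sides:
\[
y_\inner - \hat{y}_0 \;=\; (y_0 - \hat{y}_0) \;-\; A_\inner^{\dagger}\, b, \qquad b_j := f(y_j) - f^\ast + \langle v_j,\, y_0 - y_j\rangle,
\]
for $j = 0, \dots, \inner-1$. Next I would use the standard pseudoinverse identity $A_\inner^{\dagger} A_\inner = P_{\mathrm{range}(A_\inner^\T)} = P_{\ker(A_\inner)^\perp} = I - P_{\ker(A_\inner)}$ to split the first term as
\[
y_0 - \hat{y}_0 \;=\; P_{\ker(A_\inner)}(y_0 - \hat{y}_0) \;+\; A_\inner^{\dagger} A_\inner (y_0 - \hat{y}_0).
\]
Substituting this decomposition yields
\[
y_\inner - \hat{y}_0 \;=\; P_{\ker(A_\inner)}(y_0 - \hat{y}_0) \;+\; A_\inner^{\dagger}\bigl(A_\inner(y_0 - \hat{y}_0) - b\bigr).
\]

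Finally, I would compute the $j$-th entry of the vector in parentheses. By definition the $j$-th row of $A_\inner$ is $v_j^\T$, so
\[
\bigl[A_\inner(y_0-\hat{y}_0) - b\bigr]_j \;=\; \langle v_j, y_0 - \hat{y}_0\rangle - \bigl(f(y_j) - f^\ast + \langle v_j, y_0 - y_j\rangle\bigr) \;=\; -\bigl(f(y_j) - f^\ast + \langle v_j, \hat{y}_0 - y_j\rangle\bigr),
\]
after cancelling the $\langle v_j, y_0\rangle$ contributions. Plugging this back produces exactly the claimed identity. The only real content is the pseudoinverse identity $A^\dagger A = I - P_{\ker(A)}$ together with a one-line cancellation; there is no serious obstacle, but one must be careful to align the sign conventions in $b_j$ with the minus sign in front of $A_\inner^\dagger$ in the algorithm.
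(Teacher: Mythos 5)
Your proof is correct and is essentially identical to the paper's: both unfold the update, apply $I - A_{\inner}^{\dag}A_{\inner} = \proj_{\ker(A_{\inner})}$, and absorb $A_{\inner}^{\dag}A_{\inner}(y_0-\hat y_0)$ into the bracketed vector via the cancellation $\langle v_j, y_0\rangle - \langle v_j, y_0\rangle$. Your remark that the upper index should read $\inner-1$ (to match the $\inner$ rows of $A_{\inner}$) correctly identifies a small indexing typo in the statement; otherwise there is nothing to add.
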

\begin{proof}
  Recall the projection formula
  $I - A_{\inner}^{\dag} A_{\inner} = \proj_{\ker(A_{\inner})}$. The claimed decomposition
  follows since
  \begin{align*}
    y_{\inner} - \hat{y}_0 &= \proj_{\ker(A_{\inner})}(y_0 - \hat{y}_0) +
    A_{\inner}^{\dag} A_{\inner} (y_0 - \hat{y}_0)
    - A_{\inner}^{\dag} \bmx{f(y_j) + \ip{v_j, y_0 - y_j}}_{i=0}^{\inner} \\
                          &= \proj_{\ker(A_{\inner})}(y_0 - \hat{y}_0)
    - A_{\inner}^{\dag} \bmx{f(y_j) + \ip{v_j, \hat{y}_0 - y_j}}_{i=0}^{\inner},
  \end{align*}
  as desired.
\end{proof}
The second lemma shows that the update $y_{\inner}$ improves upon $y_0$
whenever the gap vector $y_{\inner} - \hat y_0$ has a large component in $\ker(A_{\inner})$.
\begin{lemma}[Distance reduction]
  \label[lemma]{lemma:alternative-I-manifold1}
  Fix $\gamma \leq \mu/2L$. Suppose that for some $i \geq 1$, we have
  $$
  \norm{\proj_{\ker(A_{\inner})^\perp}(y_{\inner} - \hat y_0)} \leq \gamma\norm{y_{i} -\hat y_i}.
  $$
  Then we have the following bound
  \begin{equation*}
    \norm{y_{\inner} - \hat{y}_0} \leq \sqrt{\frac{1 - \frac{\mu^2}{4L^2}}{1 - \gamma^2}}
    \norm{y_0 - \hat{y}_0} \leq
    \norm{y_0 - \hat{y}_0}.
  \end{equation*}
\end{lemma}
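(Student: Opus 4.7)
The plan is to combine the decomposition of Lemma~\ref{lemma:build-bundle-error-decomposition} with the aiming inequality of Lemma~\ref{lemma:aiming-inequality} applied at the initial point $y_0 = x$. Write $E := A_\inner^\dag \bmx{f(y_j) + \langle v_j, \hat{y}_0 - y_j\rangle}_{j}$, so that by Lemma~\ref{lemma:build-bundle-error-decomposition},
\[
  y_\inner - \hat{y}_0 = P_{\ker(A_\inner)}(y_0 - \hat{y}_0) - E.
\]
Since $E \in \range(A_\inner^\T) = \ker(A_\inner)^\perp$, applying $P_{\ker(A_\inner)}$ to both sides gives $P_{\ker(A_\inner)}(y_\inner - \hat{y}_0) = P_{\ker(A_\inner)}(y_0 - \hat{y}_0)$. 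In particular, the Pythagorean theorem yields
\[
  \|y_\inner - \hat{y}_0\|^2 = \|P_{\ker(A_\inner)}(y_0 - \hat{y}_0)\|^2 + \|P_{\ker(A_\inner)^\perp}(y_\inner - \hat{y}_0)\|^2.
\]

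Next I would handle the two terms on the right separately. For the second term, the hypothesis together with $\|y_\inner - \hat{y}_\inner\| \leq \|y_\inner - \hat{y}_0\|$ (since $\hat{y}_\inner$ is a nearest point of $\cX_\ast$ to $y_\inner$) gives $\|P_{\ker(A_\inner)^\perp}(y_\inner - \hat{y}_0)\| \leq \gamma \|y_\inner - \hat{y}_0\|$. For the first term, the key observation is that $v_0$ is (the transpose of) the first row of $A_\inner$, hence $v_0 \in \ker(A_\inner)^\perp$. Therefore
\[
  \langle v_0, y_0 - \hat{y}_0 \rangle = \langle v_0, P_{\ker(A_\inner)^\perp}(y_0 - \hat{y}_0) \rangle \leq \|v_0\| \cdot \|P_{\ker(A_\inner)^\perp}(y_0 - \hat{y}_0)\|.
\]
The aiming inequality of Lemma~\ref{lemma:aiming-inequality} applied at $y_0$ (which is valid under the distance and neighborhood hypotheses of Theorem~\ref{corollary:function-value-reduction}) lower-bounds the left-hand side by $(\mu/2)\|y_0 - \hat{y}_0\|$, while $\|v_0\| \leq L$. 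This yields $\|P_{\ker(A_\inner)^\perp}(y_0 - \hat{y}_0)\| \geq \tfrac{\mu}{2L}\|y_0 - \hat{y}_0\|$, and then
\[
  \|P_{\ker(A_\inner)}(y_0 - \hat{y}_0)\|^2 = \|y_0 - \hat{y}_0\|^2 - \|P_{\ker(A_\inner)^\perp}(y_0 - \hat{y}_0)\|^2 \leq \Bigl(1 - \tfrac{\mu^2}{4L^2}\Bigr)\|y_0 - \hat{y}_0\|^2.
\]

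Combining the two bounds gives $(1 - \gamma^2)\|y_\inner - \hat{y}_0\|^2 \leq (1 - \mu^2/(4L^2))\|y_0 - \hat{y}_0\|^2$, which is the first claimed inequality after dividing by $(1-\gamma^2)$ and taking square roots; the second inequality is immediate from $\gamma \leq \mu/(2L)$, which forces the ratio under the square root to be at most $1$.

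The proof is largely bookkeeping — there is no real obstacle, only the need to verify two small facts: that the error vector $E$ lives in $\ker(A_\inner)^\perp$ (so it does not contaminate the kernel component of $y_\inner - \hat y_0$), and that $v_0$ belongs to $\ker(A_\inner)^\perp$ so that the aiming inequality at $y_0$ can be turned into a lower bound on the perpendicular component of $y_0 - \hat y_0$. Everything else is the Pythagorean identity.
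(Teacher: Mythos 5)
Your proposal is correct and follows essentially the same route as the paper's proof: the decomposition from Lemma~\ref{lemma:build-bundle-error-decomposition}, the Pythagorean split along $\ker(A_\inner)$, the aiming inequality at $y_0$ combined with $v_0\in\ker(A_\inner)^\perp$ to lower-bound the perpendicular component of $y_0-\hat y_0$, and the bound $\|y_\inner-\hat y_\inner\|\le\|y_\inner-\hat y_0\|$ to absorb the $\gamma$-term. No gaps.
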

\begin{proof}
By definition, we have $v_{0} \perp \ker(A_{\inner})$.
In addition, by \cref{lemma:aiming-inequality}, we have the bound  $\abs{\dotp*{\frac{v_0}{\|v_0\|}, y_{0} - \hat{y}_0}} \geq
  \frac{\mu}{2L}\, \norm{y_{0} - \hat{y}_0}$. Taken together, these imply
\begin{align}\label{eq:y0good}
  \norm{\proj_{\ker(A_{\inner})^{\perp}}(y_0 - \hat{y}_0)}^2 \geq
  \frac{\mu^2}{4L^2} \norm{y_0 - \hat{y}_0}^2.
\end{align}
  Next, observe that by Lemma~\ref{lemma:build-bundle-error-decomposition}, we have
  $P_{\ker(A_{\inner})}(y_{\inner} - \hat y_0) = P_{\ker(A_{\inner})}(y_0 - \hat y_0)$.
  Consequently, we have
  \begin{align*}
    \norm{y_{\inner} - \hat{y}_0}^2 &= \norm{\proj_{\ker(A_{\inner})}(y_0 - \hat{y}_0)}^2
    + \norm{\proj_{\ker(A_{\inner})^{\perp}}(y_{\inner} - \hat{y}_0)}^2 \\
                             &=
    \norm{y_0 - \hat{y}_0}^2 - \norm{\proj_{\ker(A_{\inner})^{\perp}}(y_0 - \hat{y}_0)}^2
    + \norm{\proj_{\ker(A_{\inner})^{\perp}}(y_{\inner} - \hat{y}_0)}^2 \\
                             &\leq
    \norm{y_0 - \hat{y}_0}^2 \left(1 - \frac{\mu^2}{4L^2}\right) +
    \norm{\proj_{\ker(A_{\inner})^{\perp}}(y_{\inner} - \hat{y}_0)}^2 \\
                             &\leq
    \norm{y_0 - \hat{y}_0}^2 \left(1 - \frac{\mu^2}{4L^2}\right) +
    \gamma^2 \norm{y_i - \hat{y}_0}^{2},
  \end{align*}
  where the penultimate inequality follows from~\eqref{eq:y0good} and the last
  inequality follows from $ \norm{\proj_{\ker(A_{\inner})^\perp}(y_{\inner} - \hat y_0)} \leq \gamma\norm{y_{i} -\hat y_i}$ and the bound $\|y_{i}- \hat y_i\| \leq \|y_i - \hat y_0\|$. Rearranging, we arrive at the desired conclusion.
\end{proof}
The third and final lemma is the core of our argument. It shows that
$\bundle$ increases the rank of $A_{\inner}$ until it finds a vector $y_{\inner}$ that superlinearly improves upon $y_0$.
\begin{lemma}[Alternatives]
\label[lemma]{lemma:manifold-alternatives}
Fix $\gamma \leq \mu/4L$ and suppose that for all $j \leq \inner$, we have
\begin{align}
\begin{aligned}
\|y_j - \hat y_0\| &\leq \|y_0 - \hat y_0\|;\\
\|\proj_{\ker(A_j)^\perp}(y_j - \hat y_0)\| &\leq \gamma \norm{y_{j} - \hat{y}_j}; \\
\dotp{v_j, \proj_{\ker(A_j)}(y_j - \hat{y}_j)} &\geq \frac{\mu}{8} \norm{y_j - \hat{y}_j};
\end{aligned}\label{eq:alternativesassumption}
\end{align}
and the inclusions $y_0 \in B_{\frac{\delta}{4}}(\bar{x})$, $y_j \in B_{\frac{3 \delta}{4}}(\bar x)$.
Then
\begin{enumerate}
\item \label{item:rankupdate} $\rank(A_{\inner+1}) = \inner+1$ and
  \[
    \opnorm{A_{\inner+1}^{\dag}} \leq
    \max\set{1, \frac{2}{\mu}} \left( \frac{8\sqrt{2} L}{\mu} \right)^{\inner}.
  \]
\item At least one of the following hold:
\begin{enumerate}
\item \label{item:maintainprogress} {\bf (Maintain progress)} We have the inequalities
  \begin{align}
      \norm{y_{\inner+1} - \hat{y}_0} &\leq \norm{y_0 - \hat{y}_0}; \notag\\
    \norm{\proj_{\ker(A_{\inner+1})^\perp}(y_{\inner+1} - \hat{y}_0)} &\leq \gamma \norm{y_{\inner+1} - \hat{y}_{\inner + 1}};\notag \\
    \ip{v_{\inner+1}, P_{\ker(A_{\inner+1})}({y}_{\inner+1} - \hat{y}_{\inner + 1})} &\geq
    \frac{\mu}{8} \norm{y_{\inner+1} - \hat{y}_{\inner + 1}}; \label{eq:dplowerbound}
  \end{align}
  and the inclusion $y_{\inner+1} \in B_{\frac{3 \delta}{4}}(\bar{x})$.
  \label{item:maintain-progress}
\item {\bf (Superlinear improvement I)}: the next iterate satisfies
  \[
    \begin{aligned}
      \dist(y_{\inner+1}, \cX_{\ast}) &\leq
        \frac{8L}{\mu} \left(4^{1+\eta} \constb \sqrt{\inner+1}\opnorm{A_{\inner+1}^{\dag}}\right)
      \norm{y_{0} - \hat{y}_0}^{1+\eta}; \quad \\
        \norm{y_{\inner+1} - \hat{y}_0} &\leq
        \norm{y_0 - \hat{y}_0}.
    \end{aligned}
  \]
  \label{item:local-quadratic-convergence}
\item {\bf (Superlinear improvement II)}: the next iterate satisfies
  \begin{align*}
    \dist(y_{\inner+1}, \cX_\ast) &\leq  \frac{\constb \sqrt{\inner+1}}{\gamma}\opnorm{A_{\inner+1}^{\dagger}}
    \norm{y_0 - \hat y_0}^{1+\eta}; \\
    \|y_{i+1} - \hat y_0\| &\leq \|y_0 - \hat y_0\| +  \constb \sqrt{\inner + 1} \opnorm{A_{\inner+1}^{\dag}} \norm{y_0 - \hat{y}_0}^{1 + \eta}.
  \end{align*}
  \label{item:local-superlinear-convergence}
\end{enumerate}
\end{enumerate}
\end{lemma}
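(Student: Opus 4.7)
My plan is to handle claim (1) directly from Proposition~\ref{proposition:build-bundle-sval-lower-bound}, then organize claim (2) as a case split on two quantities: whether the orthogonal-projection condition $\norm{\proj_{\ker(A_{\inner+1})^\perp}(y_{\inner+1}-\hat y_0)} \leq \gamma\norm{y_{\inner+1}-\hat y_{\inner+1}}$ holds, and whether the inner-product condition \eqref{eq:dplowerbound} holds.

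For claim (1), I would combine the third inductive hypothesis with Cauchy--Schwarz applied to $\dotp{\proj_{\ker(A_j)} v_j}{y_j - \hat y_j}$ to deduce $\norm{\proj_{\ker(A_j)} v_j} \geq \mu/8$ for all $j \leq \inner$. The aiming inequality of~\cref{lemma:aiming-inequality} additionally yields $\norm{v_0} \geq \mu/2$. Plugging $\alpha = \mu/8$ into~\cref{proposition:build-bundle-sval-lower-bound} then produces the rank statement together with the displayed bound on $\opnorm{A_{\inner+1}^{\dag}}$.

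For claim (2), the starting point is~\cref{lemma:build-bundle-error-decomposition}, which gives
\begin{align*}
y_{\inner+1} - \hat y_0 = \proj_{\ker(A_{\inner+1})}(y_0 - \hat y_0) + E, \qquad E \in \ker(A_{\inner+1})^\perp.
\end{align*}
Bounding each coordinate $f(y_j)+\dotp{v_j}{\hat y_0 - y_j}$ via $(b)$-regularity and using $\norm{y_j-\hat y_0}\leq \norm{y_0-\hat y_0}$ yields $\norm{E}\leq \varepsilon := \constb\sqrt{\inner+1}\,\opnorm{A_{\inner+1}^{\dag}}\norm{y_0-\hat y_0}^{1+\eta}$. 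Crucially, $\proj_{\ker(A_{\inner+1})^\perp}(y_{\inner+1}-\hat y_0) = E$. If the orthogonal-projection condition fails, i.e.\ $\norm{E} > \gamma\norm{y_{\inner+1}-\hat y_{\inner+1}}$, then $\norm{y_{\inner+1}-\hat y_{\inner+1}} < \varepsilon/\gamma$ and the triangle inequality on the decomposition gives $\norm{y_{\inner+1}-\hat y_0}\leq \norm{y_0-\hat y_0}+\varepsilon$; both bounds in \ref{item:local-superlinear-convergence} follow. Otherwise,~\cref{lemma:alternative-I-manifold1} (whose hypothesis is met since $\gamma \leq \mu/(4L) \leq \mu/(2L)$) forces $\norm{y_{\inner+1}-\hat y_0}\leq \norm{y_0-\hat y_0}$; combined with $\norm{y_0-\hat y_0}\leq \delta/4$ and $\norm{\hat y_0-\bar x}\leq \delta/2$ this yields $y_{\inner+1}\in B_{3\delta/4}(\bar x)$. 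If \eqref{eq:dplowerbound} also holds, we land in~\ref{item:maintain-progress}.

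The main obstacle is the remaining case, which should produce~\ref{item:local-quadratic-convergence}. With \eqref{eq:dplowerbound} failing, I invoke aiming at $y_{\inner+1}$ (now confirmed to lie in $B_{\delta}(\bar x)$) to get $\dotp{v_{\inner+1}}{y_{\inner+1}-\hat y_{\inner+1}} \geq (\mu/2)\norm{y_{\inner+1}-\hat y_{\inner+1}}$. Subtracting the failed bound yields $\dotp{v_{\inner+1}}{\proj_{\ker(A_{\inner+1})^\perp}(y_{\inner+1}-\hat y_{\inner+1})} \geq (3\mu/8)\norm{y_{\inner+1}-\hat y_{\inner+1}}$, hence $\norm{\proj_{\ker(A_{\inner+1})^\perp}(y_{\inner+1}-\hat y_{\inner+1})} \geq (3\mu/(8L))\norm{y_{\inner+1}-\hat y_{\inner+1}}$ by Cauchy--Schwarz. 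Writing $\proj_{\ker(A_{\inner+1})^\perp}(y_{\inner+1}-\hat y_{\inner+1}) = E - \proj_{\ker(A_{\inner+1})^\perp}(\hat y_{\inner+1}-\hat y_0)$ and using $\proj_{\ker(A_{\inner+1})^\perp} = A_{\inner+1}^{\dag}A_{\inner+1}$, the task reduces to bounding each coordinate $|\dotp{v_j}{\hat y_{\inner+1}-\hat y_0}|$ of $A_{\inner+1}(\hat y_{\inner+1}-\hat y_0)$. Applying $(b)$-regularity at $(y_j,v_j)$ with both $\hat y_0$ and $\hat y_{\inner+1}$ as targets and subtracting, together with $\norm{y_j-\hat y_{\inner+1}}\leq 3\norm{y_0-\hat y_0}$ (from $\norm{\hat y_{\inner+1}-\hat y_0}\leq 2\norm{y_0-\hat y_0}$, a consequence of the previous branch), yields the coordinatewise bound $(3^{1+\eta}+1)\constb\norm{y_0-\hat y_0}^{1+\eta}\leq 4^{1+\eta}\constb\norm{y_0-\hat y_0}^{1+\eta}$. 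Assembling produces the case \ref{item:local-quadratic-convergence} distance bound with explicit constant $16L/(3\mu)\leq 8L/\mu$, matching the statement.
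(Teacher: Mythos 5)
Your proposal is correct and follows essentially the same path as the paper: Item 1 via Proposition~\ref{proposition:build-bundle-sval-lower-bound} with $\alpha=\mu/8$ and $\opnorm{A_0}\ge\mu/2$, then the decomposition of Lemma~\ref{lemma:build-bundle-error-decomposition} together with $(b)$-regularity to bound $E$, Lemma~\ref{lemma:alternative-I-manifold1} when the projection condition holds, and the aiming inequality at $y_{\inner+1}$ plus the estimate on $|\dotp{v_j,\hat y_{\inner+1}-\hat y_0}|$ to handle the remaining branch. The only cosmetic difference is that you branch directly on whether~\eqref{eq:dplowerbound} holds and bound $\|E\|$ by $\varepsilon$, whereas the paper first derives the inequality $\frac{\mu}{4}\norm{y_{\inner+1}-\hat y_{\inner+1}}\le C\norm{y_0-\hat y_0}^{1+\eta}+\dotp{v_{\inner+1},P_{\ker(A_{\inner+1})}(y_{\inner+1}-\hat y_{\inner+1})}$ (using the case assumption $\|E\|\le\gamma\norm{y_{\inner+1}-\hat y_{\inner+1}}$) and then splits on the auxiliary threshold $C\norm{y_0-\hat y_0}^{1+\eta}\lessgtr\frac{\mu}{8}\norm{y_{\inner+1}-\hat y_{\inner+1}}$; both covers are exhaustive and yield compatible constants.
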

\begin{proof}
We first prove Item~\ref{item:rankupdate}. Observe that
$y_0$ satisfies the conditions of~\cref{lemma:aiming-inequality} by assumption, so
$\ip{v_0, y_0 - \hat{y}_0} \geq \frac{\mu}{2} \norm{y_0 - \hat{y}_0}$. Consequently, since $A_0 = [v_0^{\T}]$, we have $\opnorm{A_0} \geq \frac{\mu}{2}$.
Furthermore, the third inequality of~\eqref{eq:alternativesassumption} implies that $\norm{\proj_{\ker(A_j)}(v_j)} \geq \frac{\mu}{8}$ for all $j \leq
\inner$. Thus, \cref{proposition:build-bundle-sval-lower-bound} yields
\[
  \rank(A_{\inner+1}) = \inner + 1, \quad
  \text{and} \quad
  \sigma_{\min}(A_{\inner+1}) = \sigma_{\inner+1}(A_{\inner+1}) \geq
  \min\set{\frac{\mu}{2}, 1} \cdot \left( \frac{\mu}{8\sqrt{2} L} \right)^{\inner}.
\]
The inequality follows after noticing that $\opnorm{A^{\dag}} = 1 / \sigma_{\min}(A)$.

\vspace{12pt}

\noindent For the rest of the proof, we perform a case-by-case analysis.

\paragraph{Case 1:} Suppose $\norm{\proj_{\ker(A_{\inner+1})^{\perp}}(y_{\inner+1} - \hat {y}_{0})}
\geq \gamma \norm{y_{\inner + 1} - \hat y_{\inner + 1}}$. When this holds, we have
\begin{align*}
\gamma \|y_{\inner+1} - \hat y_{\inner + 1}\| &\leq
  \norm{\proj_{\ker(A_{\inner+1})^{\perp}}(y_{\inner+1} - \hat{y}_0)}\\
  &=
  \norm{A_{\inner+1}^{\dag}\bmx{f(y_j) + \dotp{v_j, \hat{y}_0 - y_j}}_{j=0}^{\inner}}
  \\
    &\leq \opnorm{A_{\inner+1}^{\dag}} \sqrt{\sum_{j=0}^{\inner} \constb^2 \norm{\hat{y}_0 - y_j}^{2(1 + \eta)}}\\
    &\leq
  \constb \sqrt{\inner + 1} \opnorm{A_{\inner+1}^{\dag}} \norm{y_0 - \hat{y}_0}^{1 + \eta}
\end{align*}
where the first inequality follows by assumption, the second inequality
follows by $(b)$-regularity, and the third inequality follows from the assumption
that $\norm{y_j - \hat{y}_0} \leq \norm{y_0 - \hat{y}_0}$ for all
$j \leq \inner$. In addition, using Lemma~\ref{lemma:build-bundle-error-decomposition} and the above inequality, we find that
\begin{align*}
\|y_i - \hat y_0\| &\leq \|y_0 - \hat y_0\| +  \norm{A_{\inner+1}^{\dag}\bmx{f(y_j) + \dotp{v_j, \hat{y}_0 - y_j}}_{j=0}^{\inner}} \\
&\leq \|y_0 - \hat y_0\| +  \constb \sqrt{\inner + 1} \opnorm{A_{\inner+1}^{\dag}} \norm{y_0 - \hat{y}_0}^{1 + \eta},
\end{align*}
as desired.
\;

\paragraph{Case 2:} Suppose $\|\proj_{\ker(A_{\inner+1})^\perp}(y_{\inner+1} - \hat{y}_0)\| \leq  \gamma \norm{y_{\inner + 1} - \hat y_{\inner + 1}}$. Under this condition, Lemma \ref{lemma:alternative-I-manifold1}  ensures that
\begin{equation}
  \norm{y_{\inner+1} - \hat{y}_0} \leq \norm{y_0 - \hat{y}_0}.
  \label{eq:maintain-progress-distance}
\end{equation}
This proves the first two inequalities of~\cref{item:maintain-progress}.
To prove the inclusion $y_{\inner+1} \in B_{\frac{3\delta}{4}}(\bar x)$, note
that since $y_0 \in B_{\frac{\delta}{4}}(\bar{x})$ and
$\norm{y_0 - \hat{y}_0} \leq \norm{y_0 - \bar{x}}$, it follows that
\begin{align*}
  \norm{y_{\inner+1} - \bar{x}} &\leq
  \norm{y_{\inner+1} - \hat{y}_0} + \norm{\hat{y}_0 - y_0}
  + \norm{y_0 - \bar x}
                            \leq
  2 \norm{y_0 - \hat{y}_0} + \norm{y_0 - \bar x} < \frac{3 \delta}{4},
\end{align*}
as desired.

In the remainder of the proof, we show that either we obtain local superlinear improvement or the lower bound~\eqref{eq:dplowerbound} holds.
To that end, we first note that
\begin{align}
  \frac{\mu}{2} \norm{y_{\inner+1} - \hat{y}_{\inner+1}} &\leq
  \dotp{v_{\inner+1}, y_{\inner+1} - \hat{y}_{\inner+1}} \notag \\
                                                         &=
  \dotp{v_{\inner+1}, P_{\ker(A_{\inner+1})^{\perp}}(y_{\inner+1} - \hat{y}_{\inner+1})}
  +
  \dotp{v_{\inner+1}, P_{\ker(A_{\inner+1})}(y_{\inner+1} - \hat{y}_{\inner+1})} \notag \\
                                                         &\leq
  L \norm{P_{\ker(A_{\inner+1})^{\perp}}(y_{\inner+1} - \hat{y}_{\inner+1})}
  + \dotp{v_{\inner+1}, P_{\ker(A_{\inner+1})}(y_{\inner+1} - \hat{y}_{\inner+1})},
  \label{eq:case-2-first}
\end{align}
where the first inequality follows from the Lemma~\ref{lemma:aiming-inequality} and the third inequality follows from Cauchy-Schwarz. We now upper bound the first term in the right-hand side of~\eqref{eq:case-2-first}.
\begin{claim}
The following bound holds:
\begin{align*}
  \norm{P_{\ker(A_{\inner+1})^{\perp}}(y_{\inner+1} - \hat{y}_{\inner+1})} &\leq
  (\mu/4L)   \norm{y_{\inner + 1} - \hat y_{\inner + 1}}
  + 4^{1+\eta} \constb \sqrt{\inner + 1} \opnorm{A_{\inner+1}^{\dag}}
  \norm{y_0 - \hat{y}_0}^{1+\eta}.% \\
\end{align*}
\end{claim}
\begin{proof}
First note that
\begin{align}
  \norm{P_{\ker(A_{\inner+1})^{\perp}}(y_{\inner+1} - \hat{y}_{\inner+1})} &\leq
  \norm{P_{\ker(A_{\inner+1})^{\perp}}(y_{\inner+1} - \hat{y}_{0})} +
  \norm{P_{\ker(A_{\inner+1})^{\perp}}(\hat{y}_{\inner+1} - \hat{y}_{0})} \notag \\
                                                                           &\leq
  \gamma   \norm{y_{\inner + 1} - \hat y_{\inner + 1}} +
  \norm{A_{\inner+1}^{\dag} A_{\inner+1}(\hat{y}_{\inner+1} - \hat{y}_0)},
  \label{eq:case-2-second}
\end{align}
where the second inequality follows from the assumption of this case and the projection identity
$P_{\ker(A_{i+1})^{\perp}} = A_{i+1}^{\dag} A_{i+1}$.
We now upper bound $\norm{A_{\inner+1}^{\dag} A_{\inner+1}(\hat{y}_{\inner+1} - \hat{y}_0)}$ in~\eqref{eq:case-2-second}. Indeed, $(b)$-regularity yields
\begin{equation*}
  \abs{f(y_j) + \dotp{v_j, \hat{y} - y_j}} \leq \constb \norm{\hat{y} - y_j}^{1 + \eta}
  \qquad \text{for all  $\hat{y} \in B_{2 \delta}(\bar{x})$ and $j \leq i$.}
\end{equation*}
 Consequently, for all $j \leq i$, we have
\begin{align*}
  \abs{\dotp{v_j, \hat{y}_{\inner+1} - \hat{y}_0}} &=
  \abs{f(y_j) + \dotp{v_j, \hat{y}_{\inner+1} - y_j} -
  \left( f(y_j) + \dotp{v_j, \hat{y}_0 - y_j} \right)} \\
                                                   &\leq
  \constb \left(\norm{\hat{y}_0 - y_j}^{1+\eta} + \norm{\hat{y}_{\inner+1} - y_j}^{1+\eta}\right) \\
                                                   &\leq
  \constb \left(\norm{y_j - \hat{y}_0}^{1+\eta} +
    (\norm{\hat{y}_{\inner+1} - y_{\inner+1}} + \norm{y_{\inner+1} - \hat{y}_0} +
     \norm{y_j - \hat{y}_0})^{1 + \eta} \right) \\
                                                   &\leq
  \constb \left(\norm{y_0 - \hat{y}_0}^{1 + \eta} +
  (2 \norm{y_{i+1} - \hat{y}_0} + \norm{y_0 - \hat{y}_0})^{1+\eta}\right) \\
                                                   &\leq
                                                   \constb 4^{1 + \eta} \norm{y_0 - \hat{y}_0}^{1+\eta},
\end{align*}
where the third inequality follows from~\eqref{eq:alternativesassumption} and
the fourth inequality follows from~\eqref{eq:maintain-progress-distance}.
Now, since $A_{\inner+1} = \bmx{v_1 & \dots & v_{\inner}}^{\T}$, it follows
that
\begin{align*}
  \norm{A_{\inner+1}^{\dag} A_{\inner+1}(\hat{y}_{\inner+1} - \hat{y}_0)} &=
  \opnorm{A_{\inner+1}^{\dag}}\sqrt{\sum_{j=0}^{\inner} \dotp{v_{j}, \hat{y}_{\inner+1} - \hat{y}_0}^2} \leq
  4^{1 + \eta} \constb \sqrt{\inner+1}\opnorm{A_{\inner+1}^{\dag}} \norm{y_0 - \hat{y}_0}^{1+\eta},
\end{align*}
Returning to~\eqref{eq:case-2-second}, we thus arrive at the bound
\begin{align*}
  \norm{P_{\ker(A_{\inner+1})^{\perp}}(y_{\inner+1} - \hat{y}_{\inner+1})} &\leq
  \gamma   \norm{y_{\inner + 1} - \hat y_{\inner + 1}}
  + 4^{1+\eta} \constb \sqrt{\inner + 1} \opnorm{A_{\inner+1}^{\dag}}
  \norm{y_0 - \hat{y}_0}^{1+\eta}. % \\
\end{align*}
Noting that $\gamma L \leq \mu/4$ yields the result.
\end{proof}

Therefore, plugging the conclusion of the claim into~\eqref{eq:case-2-first}, we obtain
\begin{align}
  \frac{\mu}{4} \norm{y_{\inner+1} - \hat{y}_{\inner+1}} &\leq
  C \norm{y_0 - \hat{y}_0}^{1+\eta} +
  \dotp{v_{\inner+1}, P_{\ker(A_{\inner+1})}(y_{\inner+1} - \hat{y}_{\inner+1})}
  \label{eq:case-2-omg}
\end{align}
for the constant
$  C := 4^{1+\eta}L \constb \sqrt{\inner+1} \opnorm{A_{\inner+1}^{\dag}}.$
We now analyze~\eqref{eq:case-2-omg} in two scenarios:
    First suppose that $C \norm{y_0 - \hat{y}_0}^{1+\eta} \leq \frac{\mu}{8} \norm{y_{\inner+1} -\hat{y}_{\inner+1}}$. Then
    upper bounding~\eqref{eq:case-2-omg} and rearranging yields
    \[
      \frac{\mu}{8} \norm{y_{\inner+1} - \hat{y}_{\inner+1}} \leq
      \dotp{v_{\inner+1}, P_{\ker(A_{\inner+1})}(y_{\inner+1} - \hat{y}_{\inner+1})},
    \]
    which proves~\eqref{eq:dplowerbound}. Thus, the conclusion of Item~\ref{item:maintainprogress} follows.
  Otherwise, the conclusion of~\cref{item:local-quadratic-convergence} follows by
    \[
      \norm{y_{\inner+1} - \hat{y}_{\inner+1}} \leq
      \frac{8C}{\mu} \norm{y_0 - \hat{y}_0}^{1+\eta},
    \]
    and equation~\eqref{eq:maintain-progress-distance}.
This completes the proof of the lemma.
\end{proof}

  We now complete the proof of the theorem. Let $y_{\inner}$ be the first iterate
such that Item~\ref{item:maintainprogress} in Lemma~\ref{lemma:manifold-alternatives}
does not hold (such an iterate must exist since the rank of $A_{\inner}$ increases
at each iteration). We first show that $\tilde x$ exists and $f(\tilde x) \leq f(y_{\inner})$.
Indeed, by~\cref{item:local-quadratic-convergence,item:local-superlinear-convergence}
there exists a constant $B>0$ such that
\begin{align}\label{eq:manifolddecrease}
 \|y_\inner - \hat y_\inner\| &\leq B\norm{y_0 - \hat{y}_0}^{1+\eta}; \notag\\
  \norm{y_{\inner} - \hat{y}_0} &\leq \norm{y_0 - \hat{y}_0}
  + B \norm{y_0 - \hat{y}_0}^{1+\eta}.
\end{align}
We now define the constant $\constsuper := \frac{BL}{\mu^{1 + \eta}}$ and assume that
$$
\dist(y_0, \cX_{\ast}) \leq \min\set{\left(\frac{\mu}{2\constb}\right)^{1/\eta},
      \left(\frac{\mu^{1-\eta}}{L \constsuper}\right)^{1 / \eta}}.
$$
Then $B \, \dist^{\eta}(y_0, \cX_{\ast}) \leq 1$ since
\[
    B \dist^{\eta}(y_0, \cM) \leq \frac{\mu^{1 - \eta}}{L \constsuper}
    \cdot \frac{\constsuper\mu^{1+\eta}}{L} = \frac{\mu^2}{L^2} \leq 1.
\]
Therefore by~\eqref{eq:manifolddecrease}, we have $\norm{y_{\inner} - \hat{y}_0} \leq 2 \norm{y_0 - \hat{y}_0}$. Consequently,
\begin{align*}
  \norm{y_{\inner} - y_0} \leq \norm{y_{\inner} - \hat y_0} +
  \norm{y_0 - \hat y_0} \leq 3\norm{y_0 - \hat y_0}
  \leq \frac{3}{\mu} f(y_0) \leq \tau f(y_0) .
\end{align*}
Thus, $\tilde x$ exists and $y_{\inner}$ satisfies
$
f(\tilde x) = \min_{y_j: \norm{y_j - y_0} \le \tau f(y_0)} f(y_j) \leq f(y_{\inner}).
$

Next we prove $f(\tilde x) \leq f(y_{\inner}) \leq \constsuper f(x)^{1+\eta}$.
To that end, note that $y_{\inner} \in B_{\delta}(\bar x)$. Indeed, since
$y_0 \in B_{\frac{\delta}{4}}(\bar x)$, we have
\[
  \norm{y_{\inner} - \bar{x}} \leq
  \norm{y_{\inner} - y_0} + \norm{y_0 - \bar{x}}
  \leq 4 \norm{y_0 - \bar{x}} < \delta,
\]
where the second inequality follows from $\norm{y_{\inner} - y_0} \leq
3 \norm{y_0 - \hat{y}_0}$ and the trivial bound $\norm{y_0 - \hat{y}_0}
\leq \norm{y_0 - \bar{x}}$.
Therefore, taking into account the Lipschitz continuity and sharpness of $f$ on
$B_{\delta}(\bar x)$, we find that
$$
f(\tilde x) \leq f(y_{\inner}) \leq L \|y_{\inner} - \hat y_{\inner} \| \leq BL \,
\dist^{1+\eta}(y_0, \cM) \leq \frac{BL}{\mu^{1+\eta}} f(y_0)^{1+\eta} =
  \constsuper f(x)^{1+\eta}.
$$
This completes the proof.

\subsection{Proof of Theorem~\ref{thm:SuperPolyak}}\label{sec:SuperPolyak}
We assume that \ref{item:assumption:main:sharpness},~\ref{item:assumption:main:b}, and~\ref{item:assumption:main:fallback} are in force in this section.
In the forthcoming proofs, we assume without loss of generality that
$f^{\ast} = 0$. We first show that the iterates $\set{x_{\outind}}_{k}$
exist and stay in a neighborhood of $\bar{x}$, so that every call to $\mathtt{FallbackAlg}$
produces a linearly convergent set of iterates; in turn, this shows that each iteration
of~\cref{alg:bundle-newton-method} must terminate.
\begin{lemma}
  \label[lemma]{lemma:iterates-remain-near-xast}
  The iterates $\set{x_{\outind}}_{\outind}$ exist and for all $\outind \geq 0$, satisfy
    \begin{equation*}
      \begin{aligned}
        f(x_{\outind+1})  &\leq \frac{1}{2} f(x_{\outind}); \\
       \norm{x_{\outind} - \bar{x}} &\leq \frac{1 - \rho}{2} \cdot
       \min\set{\frac{\delta}{4}, \radius_1}; \\
       \dist(x_{\outind}, \cX_{\ast}) &\leq \radius_2.
      \end{aligned}%\label{eq:iteratesexist}
    \end{equation*}
  \end{lemma}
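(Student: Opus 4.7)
The plan is to establish all three bounds simultaneously by induction on $k$. The base case $k=0$ is immediate: the halving inequality is vacuous, and since the initial conditions~\eqref{eq:initialcondfinal} are stronger than the stated bounds by a factor of $1 + M$, with $M := \max\{2\kappa(1+\rho)/(1-\rho),\, 4L/3\}$, the location inequalities hold with room to spare. The extra $1/(1+M)$ slack is precisely the budget I will need to absorb cumulative motion across bundle and fallback updates.

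For the inductive step, I would assume the three bounds for $0, 1, \ldots, k$ and analyze how $x_{k+1}$ is produced by Algorithm~\ref{alg:bundle-newton-method}. Iterating the halving property gives $f(x_j) \leq f(x_0)/2^j$ for every $j \leq k$, and Lipschitz continuity together with $f(\bar x) = 0$ yields $f(x_0) \leq L\|x_0 - \bar x\|$, so function values stay under geometric control throughout. If the \emph{bundle} branch succeeded, then the halving claim is immediate from the acceptance test on line~\ref{op:progress}, while the return rule of $\bundle$ forces $\|x_{k+1} - x_k\| \leq (3/2)^k f(x_k) \leq (3/4)^k f(x_0)$, which is summable in $k$. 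If instead the \emph{fallback} branch was invoked, the inductive bounds on $\|x_k - \bar x\|$ and $\dist(x_k, \cX_\ast)$ are precisely what Lemma~\ref{lemma:fallback-stay-in-ball} requires, so the fallback iterates $z_i$ remain inside $B_{\varphi_1}(\bar x)$ and satisfy $\dist(z_i, \cX_\ast) \leq \rho^i \dist(x_k, \cX_\ast)$. Combining this with Lipschitzness gives $f(z_i) \leq L\rho^i \dist(x_k, \cX_\ast)$, so after $\lceil \log(2\kappa)/\log(1/\rho)\rceil$ steps the function gap drops below $f(x_k)/2$, proving both finite termination and the halving inequality; at the same time, Lemma~\ref{lemma:fallback-stay-in-ball} furnishes $\|x_{k+1} - \bar x\| \leq \tfrac{2}{1-\rho}\|x_k - \bar x\|$.

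The main obstacle is combining these two branches to reprove the location bound $\|x_{k+1} - \bar x\| \leq \tfrac{1-\rho}{2}\min\{\delta/4, \varphi_1\}$, since the fallback multiplier $2/(1-\rho)$ naively compounds across iterations. My approach will be to carry along the strengthened invariant $\|x_j - \bar x\| \leq \|x_0 - \bar x\| + M\,\dist(x_0, \cX_\ast)$, complemented by $\dist(x_j, \cX_\ast) \leq \dist(x_0, \cX_\ast)$, which follows from sharpness together with the halving of $f$. With this invariant, each bundle step adds at most $(3/4)^j L\,\dist(x_0, \cX_\ast)$ to the displacement from $\bar x$, and the geometric sum of these contributions is controlled by the $4L/3$ term inside $M$. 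Each fallback call displaces $x_k$ from $\bar x$ by at most $(1+\rho)/(1-\rho)\cdot\dist(x_k, \cX_\ast)$ beyond its starting value, and since sharpness makes the sequence $\dist(x_k, \cX_\ast)$ itself geometrically summable relative to $\dist(x_0, \cX_\ast)$, the total fallback contribution is absorbed by the $2\kappa(1+\rho)/(1-\rho)$ term inside $M$. Putting these pieces together closes the induction, and the $1/(1+M)$ slack in~\eqref{eq:initialcondfinal} turns the invariant into the stated numerical bounds on $\|x_{k+1} - \bar x\|$ and $\dist(x_{k+1}, \cX_\ast)$.
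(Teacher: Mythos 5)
Your proposal follows essentially the same route as the paper: induction on $k$, a per-step displacement bound of $(3/2)^k f(x_k) \le (3/4)^k f(x_0)$ for accepted bundle steps and of $\tfrac{1+\rho}{1-\rho}\dist(x_k,\cX_\ast) \le \tfrac{1+\rho}{\mu(1-\rho)} f(x_k)$ for fallback calls (via \cref{lemma:fallback-stay-in-ball}), and a telescoping of these geometrically summable displacements against the $1/(1+M)$ slack built into~\eqref{eq:initialcondfinal} — this is exactly how the paper allocates the two terms of $M = \max\{2\kappa\tfrac{1+\rho}{1-\rho}, \tfrac{4L}{3}\}$.

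One intermediate claim, however, is false as stated: the auxiliary invariant $\dist(x_j,\cX_\ast)\le\dist(x_0,\cX_\ast)$ does \emph{not} follow from sharpness together with the halving of $f$. Those two facts only give $\dist(x_j,\cX_\ast)\le f(x_j)/\mu\le 2^{-j}f(x_0)/\mu\le 2^{-j}\kappa\,\dist(x_0,\cX_\ast)$, which exceeds $\dist(x_0,\cX_\ast)$ whenever $2^{j}<\kappa$, and there is no monotonicity to fall back on, since an accepted bundle step may move the iterate by as much as $(3/2)^{k}f(x_{k})$, which can dwarf $\dist(x_{k},\cX_\ast)$. The damage is contained: to invoke \cref{lemma:fallback-stay-in-ball} you only need $\dist(x_{k},\cX_\ast)<\radius_2$, and this follows from $\kappa\,2^{-k}\dist(x_0,\cX_\ast)\le \kappa\,\radius_2/(1+M)\le \radius_2/2$ using $M\ge 2\kappa$; alternatively, track $\dist(x_{k},\cX_\ast)\le \dist(x_0,\cX_\ast)+\sum_{j<k}\norm{x_{j+1}-x_j}$ exactly as you do for $\norm{x_{k}-\bar x}$, which is what the paper does. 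Either repair closes the induction. (A smaller point, which your write-up shares with the paper's: the accepted-bundle displacements sum to $4f(x_0)$ rather than $\tfrac{4}{3}f(x_0)$, so the $4L/3$ term of $M$ only covers this contribution up to an absolute constant.)
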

\begin{proof}
  We begin with some some notation. Define the following four constants
  \[
    \omega := \frac{1 + \rho}{\mu(1 - \rho)}; \quad
    \delta_1 := \min\set{\frac{\delta}{4}, \radius_1}; \quad
    \delta_2 := \frac{\radius_2}{1 + \max\set{2 \kappa \, \frac{1 + \rho}{1 - \rho}, \frac{4L}{3}}};
  \]
  and
  $$
  c := \left[ \left(1 + \max\set{2 \kappa \, \frac{1 + \rho}{1 - \rho}, \frac{4L}{3}}\right)
  \left( \frac{2}{1 - \rho} \right)\right]^{-1}.
  $$
  In particular, we have $\norm{x_0 - \bar{x}} \leq c\delta_1$.
  Let $z_{j,k}$ denote the $j^{\text{th}}$ iterate of the
  call to $\mathtt{FallbackAlg}(x_{\outind}, \cA, \frac{1}{2}f(x_{\outind}))$. In addition, for all $j$ and $k$, let $\hat z_{j, k} \in
  P_{\cX_\ast}(z_{j,k})$ and $\hat x_{\outind} \in P_{\cX_\ast}(x_{\outind})$. Now we turn to the proof.

  If the iterates exist, clearly the inequality $f(x_{\outind+1})  \leq \frac{1}{2} f(x_{\outind})$ holds for all $\outind \geq 0$. Thus, we focus
  on the latter two bounds. In particular, define $x_{-1}:= x_0$. Then we claim the following three bounds hold for all $k \ge 0$:
  \begin{subequations}
  \begin{align}
    \norm{x_{\outind} - x_{\outind-1}} &\leq c_{\outind} f(x_0); \label{eq:distance-succesive} \\
    \norm{x_{\outind} - \bar{x}} &\leq
    c \delta_1 + f(x_0)  \sum_{j=0}^{\outind-1} c_j; \label{eq:distance-to-xbar} \\
    \dist(x_{\outind}, \cX_\ast) &\leq \delta_2 + f(x_0)  \sum_{j=0}^{\outind-1}
   c_j.
    \label{eq:dist-to-manifold}
  \end{align}
  \end{subequations}
  where we define $c_{\outind}:= \max\set{\omega \left(\frac{1}{2}\right)^{\outind},
  \left(\frac{3}{4}\right)^{\outind}}$ for all $\outind \geq 0$. We prove the claim by induction.

  The base case is satisfied by assumption.
  Now assume that~\eqref{eq:distance-succesive},~\eqref{eq:distance-to-xbar}, and~\eqref{eq:dist-to-manifold} hold up to index $\outind$.
  We first prove~\eqref{eq:distance-succesive}. To that end, first suppose that we have $x_{\outind+1} = \bundle(x_{\outind}, (3/2)^{\outind})$. Then
  \begin{align}
    \norm{x_{\outind+1} - x_{\outind}} &\leq \left( \frac{3}{2} \right)^{\outind} f(x_{\outind}) \leq
    \left( \frac{3}{4} \right)^{\outind} f(x_0) \leq c_{\outind} f(x_0),
    \label{eq:diff-bundle}
  \end{align}
  where the first inequality follows from the definition of
  \cref{alg:build-bundle-method} and the second inequality follows from
  the inductive hypothesis. Thus, in this case, \eqref{eq:distance-succesive} holds.
  On the other hand, suppose that $x_{\outind+1} \neq \bundle(x_{\outind}, (3/2)^{\outind})$.
  To show that the fallback method initialized at $x_{\outind}$ must terminate, we first bound
  $\|x_{\outind} - \bar x\|$ and $\dist(x_{\outind}, \cX_\ast)$. To that end, the inductive hypothesis
  ensures
  \begin{align}
    \norm{x_{\outind} - \bar{x}} \leq c \delta_1
    + f(x_0) \max\set{2 \omega, \frac{4}{3}}
                         \leq c \delta_1 \left(1
                         + \max\set{2\kappa \cdot \frac{1 + \rho}{1 - \rho}, \frac{4L}{3}}\right)
                          \leq
     \frac{1-\rho}{2} \delta_1,
    \label{eq:induction-cond-1}
  \end{align}
  where the second inequality follows from the Lipschitz continuity of $f$
  and the third inequality follows by definition of $c$.
  A similar argument yields
  \begin{equation}
    \dist(x_{\outind}, \cX_\ast) \leq
    \delta_2 + f(x_0) \max\set{\frac{2 (1 + \rho)}{\mu(1 - \rho)}, \frac{4}{3}}
    \leq \radius_2.
    \label{eq:induction-cond-2}
  \end{equation}
  Then by \eqref{eq:induction-cond-1},~\eqref{eq:induction-cond-2} and~\cref{lemma:fallback-stay-in-ball},
  the fallback algorithm must terminate by some iteration $\inner \in \NN$.
  In addition, $x_{\outind+1} = z_{\inner,\outind}$ and $x_{\outind} = z_{0,\outind}$. Consequently,
  \begin{align*}
    \norm{x_{\outind+1} - x_{\outind}} = \norm{z_{\inner,\outind} - z_{0,k}}
                         \leq
    \sum_{j=0}^{\inner-1} \norm{z_{j+1, \outind} - z_{j,\outind}}
                         &\leq
    (1 + \rho) \sum_{j=0}^{\inner-1} \norm{z_{j,\outind} - \hat{z}_{j,\outind}}
  \end{align*}
  where the second inequality follows from the triangle inequality and the following bound $\norm{z_{j+1, \outind} - \hat z_{j,\outind}} \leq \rho\norm{z_{j,\outind} - \hat{z}_{j,\outind}} $.
  Next, by \cref{lemma:fallback-stay-in-ball}, we have $\norm{z_{j, \outind} - \hat{z}_{j,\outind}} \leq \rho^j\norm{z_{0,k} - \hat{z}_{0,k}}$ for all $j< i$. Therefore,
\begin{align*}
   \norm{x_{\outind+1} - x_{\outind}} \leq   (1 + \rho)\sum_{j=0}^{\inner-1} \rho^j\norm{z_{0,k} - \hat{z}_{0,k}}
                         \leq
    \frac{1 + \rho}{1 - \rho} \norm{z_{0,\outind} - \hat{z}_{0,\outind}}
                         \leq
    \frac{1 + \rho}{\mu(1 - \rho)} f(x_{\outind}) \leq  c_{\outind} f(x_0),
\end{align*}
  where the
  fourth inequality follows from sharpness and the inclusion $z_{0,\outind} \in B_{\frac{\delta}{4}}(\bar{x})$.
  This proves~\eqref{eq:distance-succesive}.

  Next, we prove~\eqref{eq:distance-to-xbar}.
  To that end, observe that
  \begin{align*}
    \norm{x_{\outind+1} - \bar{x}} \leq \norm{x_{\outind} - \bar{x}} + \norm{x_{\outind+1} - x_{\outind}} &\leq c \delta_1 + f(x_0) \sum_{j=0}^{\outind-1} c_j + c_{\outind} f(x_0)
    = c \delta_1 + f(x_0) \cdot \sum_{j=0}^{\outind} c_j,
  \end{align*}
  where the second inequality follows by the inductive assumption and~\eqref{eq:distance-succesive}. This proves~\eqref{eq:distance-to-xbar}.

  Finally we prove~\eqref{eq:dist-to-manifold}. To that end, observe that
  \begin{align*}
    \dist(x_{\outind+1}, \cX_{\ast}) \leq \norm{x_{\outind+1} - \hat{x}_{\outind}}
    \leq \dist(x_{\outind}, \cX_{\ast}) + \norm{x_{\outind+1} - x_{\outind}}
    &\leq \delta_2 + f(x_0) \cdot \sum_{j=0}^{\outind-1}
    c_j + c_{\outind} f(x_0)  \\
    &\leq  \delta_2 + f(x_0) \cdot \sum_{j=0}^{\outind} c_j,
  \end{align*}
  where the third inequality follows by the inductive assumption and~\eqref{eq:distance-succesive}.
  This proves~\eqref{eq:dist-to-manifold} and completes the proof.
\end{proof}
An immediate corollary of~\cref{lemma:iterates-remain-near-xast,lemma:fallback-stay-in-ball} is the following:
\begin{corollary}
  \label[corollary]{lemma:polyak-sgm-oracle-complexity}
Every call to algorithm
to $\mathtt{FallbackAlg}$ in \cref{alg:bundle-newton-method} will terminate after
at most $\ceil{\frac{1}{1 - \rho} \, \log(2\kappa)}$ evaluations of $\cA$.
\end{corollary}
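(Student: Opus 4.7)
My plan is to combine the linear contraction guaranteed by \cref{lemma:fallback-stay-in-ball} with the sharp growth hypothesis~\ref{item:assumption:main:sharpness} and the Lipschitz constant $L$ to convert distance-to-$\cX_\ast$ contraction into function-gap contraction, and then count iterations.

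First I would verify the preconditions. Fix any outer iteration $k$ in which the fallback branch is entered, and let $z_{0,k} := x_k$ and $z_{i+1,k} := \cA(z_{i,k})$ denote its iterates. \cref{lemma:iterates-remain-near-xast} already establishes $\|x_k - \bar x\| \le \tfrac{1-\rho}{2}\min\{\delta/4,\radius_1\}$ and $\dist(x_k,\cX_\ast) \le \radius_2$, which are exactly the hypotheses of~\cref{lemma:fallback-stay-in-ball} applied at $z_{0,k}$. Hence that lemma produces the key contraction estimate $\dist(z_{i,k},\cX_\ast) \le \rho^i\,\dist(z_{0,k},\cX_\ast)$ and keeps every $z_{i,k}$ inside $B_{\delta}(\bar x)$, so the Lipschitz constant $L$ and the sharp lower bound $f - f^\ast \ge \mu\,\dist(\cdot,\cX_\ast)$ apply at every iterate.

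Next I would chain Lipschitz continuity with sharpness to translate into a gap estimate. Pick $\hat z_{i,k} \in P_{\cX_\ast}(z_{i,k})$. Then
\[
f(z_{i,k}) - f^\ast \;\le\; L\,\|z_{i,k} - \hat z_{i,k}\| \;=\; L\,\dist(z_{i,k},\cX_\ast) \;\le\; L\rho^i\,\dist(z_{0,k},\cX_\ast) \;\le\; \kappa\,\rho^i\,(f(x_k) - f^\ast),
\]
where the last step uses sharpness at $z_{0,k}=x_k$ and the definition $\kappa = L/\mu$. The fallback loop terminates as soon as $f(z_{i+1,k}) - f^\ast \le \tfrac12(f(x_k) - f^\ast)$, so it suffices to take the smallest $i$ with $\kappa\rho^{i} \le \tfrac12$, i.e.\ $i \ge \log(2\kappa)/\log(1/\rho)$.

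Finally, I would close with the elementary inequality $\log(1/\rho) \ge 1-\rho$ valid for $\rho \in (0,1)$, which gives $\log(2\kappa)/\log(1/\rho) \le \log(2\kappa)/(1-\rho)$. Therefore the loop exits after at most $\ceil{\tfrac{1}{1-\rho}\log(2\kappa)}$ evaluations of $\cA$, and since this bound is independent of $k$ the statement follows. The only mild subtlety is making sure the chain $\dist \to$ gap uses the Lipschitz constant at the current iterate (valid by the inclusion $z_{i,k} \in B_{\delta}(\bar x)\subseteq B_{\delta}(\bar x)$ from \cref{lemma:fallback-stay-in-ball}) and that the initial conversion $\dist(z_{0,k},\cX_\ast) \le (f(x_k)-f^\ast)/\mu$ is legitimate (immediate from~\ref{item:assumption:main:sharpness} since $x_k \in B_{\delta}(\bar x)$); both are already guaranteed by \cref{lemma:iterates-remain-near-xast}, so no additional work is required.
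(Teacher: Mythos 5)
Your proof is correct and follows the route the paper intends (the paper simply labels this ``an immediate corollary of Lemmas~\ref{lemma:iterates-remain-near-xast} and~\ref{lemma:fallback-stay-in-ball}''): use \cref{lemma:iterates-remain-near-xast} to validate the preconditions of \cref{lemma:fallback-stay-in-ball}, convert the distance contraction into a gap contraction via Lipschitzness and sharpness, and count iterations with $\log(1/\rho)\ge 1-\rho$. The one place you could be cleaner is the inclusion $z_{i,k}\in B_\delta(\bar x)$: \cref{lemma:fallback-stay-in-ball} by itself only gives $z_{i,k}\in B_{\radius_1}(\bar x)$, which need not sit inside $B_\delta(\bar x)$ in general, so you should either note that $\cA$ remains an algorithmic mapping after shrinking $\radius_1$ to $\min\{\radius_1,\delta/4\}$ (so that the lemma directly yields $\|z_{i,k}-\bar x\|<\delta/4$), or invoke the explicit bound $\|z_{i,k}-\bar x\|\le \tfrac{2}{1-\rho}\|x_k-\bar x\|\le \delta/4$ from the proof of that lemma together with the hypothesis supplied by \cref{lemma:iterates-remain-near-xast}; the parenthetical ``$B_\delta(\bar x)\subseteq B_\delta(\bar x)$'' you wrote does not actually address this.
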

Finally, we show that all bundle steps are successful when $\outind \geq K_1$.
\begin{lemma}
  \label[lemma]{lemma:eventual-superlinear-improvement}
For all $i \geq 0$, we have the following:
\begin{enumerate}
\item \label{thm:SuperPolyak:item:superlinear:accept} $x_{K_1+i} = \bundle(x_{K_1+i-1}, (3/2)^{K_1 + i})$;
\item \label{thm:SuperPolyak:item:superlinear:sup} $
f(x_{K_1 + i}) - f^{\ast} \leq 2^{-\left(1 + \eta\right)^i}.
$
\end{enumerate}
\end{lemma}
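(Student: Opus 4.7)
\emph{Overall strategy.} The plan is a joint induction on $i$: at each step, I verify that the current iterate $x_{K_1+i}$ satisfies the hypotheses of Theorem~\ref{corollary:function-value-reduction} so that a single call to $\bundle$ produces a candidate obeying $f(\tilde x) \le \constsuper f(x_{K_1+i})^{1+\eta}$; then I verify that $\tilde x$ clears the acceptance test in line~\ref{op:progress} of Algorithm~\ref{alg:bundle-newton-method}, which establishes Item~\ref{thm:SuperPolyak:item:superlinear:accept} and lets me iterate the superlinear inequality to get Item~\ref{thm:SuperPolyak:item:superlinear:sup}. The three ``$\max$'' terms inside the definition~\eqref{eq:k1} of $K_1$ are chosen precisely to underwrite these three needs at the base step, and Lemma~\ref{lemma:iterates-remain-near-xast} will be used to propagate them.

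\emph{Verifying the hypotheses of Theorem~\ref{corollary:function-value-reduction}.} Recall the three requirements: (a) $\|x - \bar x\| < \delta/4$, (b) $\dist(x,\cX_\ast) \le \min\{(\mu/2\constb)^{1/\eta},\,(\mu^{1-\eta}/L\constsuper)^{1/\eta}\}$, and (c) $\tau > 3/\mu$. Condition (a) is immediate from Lemma~\ref{lemma:iterates-remain-near-xast}, which gives $\|x_{K_1+i}-\bar x\| \le \tfrac{1-\rho}{2}\min\{\delta/4,\radius_1\} < \delta/4$ for every iterate. Condition (c) is immediate from the $\log_{3/2}(3/\mu)$ term in~\eqref{eq:k1}: $(3/2)^{K_1+i} \ge (3/2)^{K_1} \ge 3/\mu$. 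For (b) I combine the monotone decrease $f(x_{K_1+i}) \le 2^{-i} f(x_{K_1}) \le 2^{-(K_1+i)}\Delta$ given by Lemma~\ref{lemma:iterates-remain-near-xast} with sharpness~\ref{item:assumption:main:sharpness}. The $(2\constb)^{1/\eta}/\mu^{1+1/\eta}$ term in~\eqref{eq:k1} delivers $f(x_{K_1})/\mu \le (\mu/2\constb)^{1/\eta}$, and the $(\kappa\constsuper)^{1/\eta}$ term (after writing $\kappa = L/\mu$) yields $f(x_{K_1})/\mu \le (\mu^{1-\eta}/L\constsuper)^{1/\eta}$; both bounds transfer to every later iterate because $f$ is monotonically nonincreasing along the algorithm.

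\emph{Acceptance and the superlinear recursion.} Applying Theorem~\ref{corollary:function-value-reduction} at $x_{K_1+i}$ yields a bundle point $\tilde x$ with $f(\tilde x) \le \constsuper f(x_{K_1+i})^{1+\eta}$. The first term of~\eqref{eq:k1} gives $f(x_{K_1}) \le \tfrac{1}{2}(2\constsuper)^{-1/\eta}$, hence $\constsuper f(x_{K_1+i})^{\eta} \le \constsuper\cdot(2\constsuper)^{-1}\cdot 2^{-\eta} = 2^{-\eta-1} < \tfrac12$, so $f(\tilde x) < \tfrac12 f(x_{K_1+i})$, the bundle step is accepted, and Item~\ref{thm:SuperPolyak:item:superlinear:accept} holds. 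For the rate, set $\beta_i := (2\constsuper)^{1/\eta} f(x_{K_1+i})$. A short calculation turns $f(x_{K_1+i+1}) \le \constsuper f(x_{K_1+i})^{1+\eta}$ into the clean recursion $\beta_{i+1} \le \tfrac12 \beta_i^{1+\eta}$ with $\beta_0 \le 1/2$. A one-line induction then gives $\beta_i \le 2^{-(1+\eta)^i}$, which translates to Item~\ref{thm:SuperPolyak:item:superlinear:sup} after absorbing the harmless factor $(2\constsuper)^{-1/\eta} \le 1$ (which we may assume by enlarging $\constsuper$ if necessary).

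\emph{Main obstacle.} The one delicate point is matching, term by term, the three thresholds hidden in the $\max$ of~\eqref{eq:k1} against the three hypotheses of Theorem~\ref{corollary:function-value-reduction}; this is the sole reason $K_1$ has the exact form it does. Once the base case is in place, the inductive propagation is almost automatic because function values only decrease, so any bound verified at $x_{K_1}$ persists at $x_{K_1+i}$; the only piece that does \emph{not} come for free from monotonicity is the distance-to-$\bar x$ bound, but that is handled uniformly for the entire trajectory by Lemma~\ref{lemma:iterates-remain-near-xast}.
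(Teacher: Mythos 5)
Your proposal is correct and follows essentially the same route as the paper's proof: verify the hypotheses of Theorem~\ref{corollary:function-value-reduction} for all iterates past $K_1$ (using Lemma~\ref{lemma:iterates-remain-near-xast} for the ball/distance containment and the three terms in the $\max$ of~\eqref{eq:k1} for, respectively, the acceptance threshold, the $(\mu/2\constb)^{1/\eta}$ bound, and the $(\mu^{1-\eta}/L\constsuper)^{1/\eta}$ bound), then show the bundle candidate is accepted, then unroll the superlinear recursion. The one cosmetic difference is the rescaling $\beta_i := (2\constsuper)^{1/\eta} f(x_{K_1+i})$, which turns the unrolling into a one-line induction on $\beta_{i+1}\le\tfrac12\beta_i^{1+\eta}$; the paper instead telescopes $a_{K_1+i}\le(\constsuper)^{\sum(1+\eta)^j}a_{K_1}^{(1+\eta)^i}$ directly. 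Both arguments implicitly need $\constsuper$ bounded below (you by $\constsuper\ge 1/2$, the paper's telescoping bound $(\constsuper)^{((1+\eta)^i-1)/\eta}\le(\constsuper)^{(1+\eta)^i/\eta}$ by $\constsuper\ge 1$), which is harmless given the explicit formula for $\constsuper$; you flag this where the paper leaves it silent.
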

\begin{proof}
  We begin with~\cref{thm:SuperPolyak:item:superlinear:accept}.
  To that end, we first show
  that for all $\outind \ge K_1$, the iterate $x_{\outind}$ and scalar $\tau = (3/2)^{\outind}$
  satisfy the assumptions of~\cref{corollary:function-value-reduction}.
  In particular, the vector $\tilde x$ exists and achieves the superlinear improvement~\eqref{eq:bigcprime}.
  Indeed, \cref{lemma:iterates-remain-near-xast} shows that
  $\norm{x_{\outind} - \bar{x}} \leq \frac{\delta}{4}$ for any $\outind$. Furthermore,
  by the sharp growth condition~\ref{item:assumption:main:sharpness} and the definition of $K_1$, we have
  \begin{align*}
    \dist(x_{\outind}, \cX_{\ast}) \leq \frac{f(x_{\outind})}{\mu} \leq
    2^{-K_1} \frac{f(x_0)}{\mu} &\leq
    \frac{1}{f(x_0) \max\set{\frac{(2\constb)^{1/\eta}}{\mu^{1 + 1/\eta}},
    \left(\frac{L\constsuper}{\mu}\right)^{1/\eta}}} \frac{f(x_0)}{\mu} \\
                           &=
    \min\set{\left(\frac{\mu}{2\constb}\right)^{1/\eta},
             \left(\frac{\mu^{1 - \eta}}{L \constsuper}\right)^{1/\eta}}.
  \end{align*}
  Finally, notice that $\left(\frac{3}{2}\right)^{\outind} > \frac{3}{\mu}$ for all $\outind
  \geq K_1$. Consequently, all the conditions of~\cref{corollary:function-value-reduction}
  are satisfied and thus the point
  \(
  \tilde{x}
  \)
  exists and satisfies
  \begin{align*}
    f(\tilde{x}) \leq \constsuper f(x_{\outind})^{1+\eta} \leq
    \constsuper \left( 2^{-K_1} f(x_0)\right)^{\eta} f(x_{\outind})
                 &\leq
                 \constsuper \left( 2^{-\log \left( f(x_0) \, (2\constsuper)^{1 / \eta} \right)} f(x_0)\right)^{\eta} f(x_{\outind}) \\
                 &\leq \frac{1}{2} f(x_{\outind}),
  \end{align*}
  where the second inequality follows from \cref{lemma:iterates-remain-near-xast}. This completes the proof
  of~\cref{thm:SuperPolyak:item:superlinear:accept}.

  We now prove~\cref{thm:SuperPolyak:item:superlinear:sup}.
  Define a sequence $\{a_k\}_k$ by $a_{\outind} := f(x_{\outind})$ for all $k \geq 0$.
  From~\cref{lemma:iterates-remain-near-xast,thm:SuperPolyak:item:superlinear:accept},
   \begin{equation}
    a_{\outind+1} \leq \begin{cases}
      \frac{1}{2} a_{\outind}, & \text{ if $\outind < K_1$;} \\
      \constsuper a_{\outind}^{1+\eta}, & \text{ otherwise}.
    \end{cases}
    \label{eq:f-decrease-allk}
  \end{equation}
  In particular, by definition of $K_1$, we have
  $$
  a_{K_1} \leq 2^{-K_1} a_0 \leq \frac{1}{2(\constsuper)^{1/\eta}}.
  $$
  Thus, unfolding~\eqref{eq:f-decrease-allk} shows that for all $i \geq 0$, we have
  \begin{align*}
    a_{K_1 + i} \leq \constsuper a_{K_1 + i - 1}^{1 + \eta} \leq
    \left(\constsuper\right)^{\sum_{j=0}^{i-1} (1 + \eta)^j} a_{K_1}^{(1 + \eta)^{i}}
    \leq \left((\constsuper)^{1/\eta} a_{K_1}\right)^{(1 + \eta)^i}
                   &\leq \left(\frac{1}{2}\right)^{(1 + \eta)^i}.
  \end{align*}
  This completes the proof of~\cref{thm:SuperPolyak:item:superlinear:sup}.
\end{proof}

To finish the proof, we tabulate the total number of evaluations of $\subg$ and $\cA$.
To that end, note that the first $K_1$ iterations each require at most $d$ evaluations of $\subg$ and $\ceil{\frac{1}{1 - \rho} \, \log(2\kappa)}$ evaluations of $\cA$ by the definition of $\bundle$ and Corollary~\ref{lemma:polyak-sgm-oracle-complexity}, respectively.
Each remaining step of the algorithm simply calls $\bundle$, which requires $d$ evaluations of $\subg$.
Therefore, since $f(x_{K_1 + i}) \leq \epsilon$ whenever
$$
  i \geq  \frac{\log \log_2 \left(\frac{1}{\epsilon} \right)}{
    \log(1+\eta)},
$$
the algorithm requires at most
$$
d K_1
+ d \ceil{\frac{\log \log_2 \left(\frac{1}{\epsilon} \right)}{\log(1+\eta)}} ,
$$
evaluations of $\subg$. In addition, since $\cA$ is only called during the first $K_1$ iterations, we evaluate $\cA$ at most $\ceil{\frac{1}{1 - \rho} \, \log(2\kappa)}K_1$ times. This completes the proof.

\subsection{Proof of Corollary~\ref{cor:corsuperpolyak}}\label{sec:corsuperpolyak}

The result is an immediate corollary of Lemma~\ref{lemma:one-step-improvement} and the Theorem~\ref{thm:SuperPolyak}

\subsection{Proof of Corollary~\ref{cor:newtonsemismooth}}\label{sec:cor:newtonsemismooth}

Let us first assume that $\cX_\ast$ is isolated at $\bar x$ and $F$ is semialgebraic.
In this case, Item~\ref{cor:newtonsemismooth:b} follows from Lemma~\ref{lem:basicexamples}.
Next we show that $f$ and $g$ satisfy Assumption~\ref{item:assumption:main:b} (the other assumption is immediate). Indeed, this immediately follows from Corollary~\ref{cor:semib} since the norm is semialgebraic.

\subsection{Proof of Corollary~\ref{cor:feasibilityfinal}}\label{sec:cor:feasibilityfinal}

In either case (i) or (ii), Item~\ref{lem:bregdistance} follows from Lemma~\ref{lem:bregdistance}.
Next we show that $f$ and $g$ satisfy Assumption~\ref{item:assumption:main:b} (the other assumption is immediate). Indeed, by Lemma~\ref{lem:bregdistance}, each pair $(\dist(\cdot, \cX_i), g_{\cX_i}^\T)$ is $(b)$-regular along $\cX_i$ at $\bar x$ with exponent $1+\eta$. Consequently, by Corollary~\ref{cor:bregsum} the pair $(f, g^\T)$ is $(b)$-regular along $\cX_\ast$ at $\bar x$ with exponent $1+\eta$, as desired.

\subsection{Proof of Corollary~\ref{cor:manifoldsuper}}\label{sec:cor:manifoldsuper}
Item~\ref{cor:feasibilityfinal:regular} is classical and shown for example in~\cite{adrianaltproj}.
Item~\ref{cor:feasibilityfinal:semismoothness} follows from Lemma~\ref{lem:bregdistance}.

\section{Numerical study and implementation strategies}\label{sec:experiments}

In this section, we present implementation strategies for the $\superpolyak$ algorithm and a brief numerical illustration. We begin with several implementation strategies.

\subsection{Implementation strategies}\label{sec:implementation}

In this section, we discuss  several strategies that we found to improve the numerical performance of $\superpolyak$.
\subsubsection{Early termination of~$\bundle$.}\label{sec:earlyterm}

We suggest terminating $\bundle$ early, returning some iterate $y_i$ whenever at least one of the following holds.
\begin{enumerate}
  \item ($\textbf{Rank deficiency}$) Suppose that there exists ${\inner} \leq d$ such that $\rank(A_j) = j$ for all $j < \inner$,
    but $\rank(A_{\inner}) < \inner$.
   In this case, we suggest that $\bundle$ return
   $$
   \tilde x =  \argmin_{y_j \colon j \leq \inner \text{ and } \norm{y_j - y_0} \le \tau f(y_0)} f(y_{j}).
   $$
  \item \label{remark:earlyterm:large}($\textbf{Large distance traveled}$) Suppose that there exists $\inner < d$ such that
  $\|y_{j} - y_0\| \leq \tau f(y_0)$ for all $j < \inner$, but $\|y_{\inner} - y_0\| > \tau f(y_0)$.
   In this case, we suggest that $\bundle$ return
   $$
   \tilde x =  \argmin_{y_j \colon j \leq \inner-1 \text{ and } \norm{y_j - y_0} \le \tau f(y_0)} f(y_{j}).
   $$
  \item \label{remark:earlyterm:super}($\textbf{Superlinear improvement}$)
  Suppose that we have an estimate $\eta_{\mathsf{est}}$ of the $(b)$-regularity exponent $\eta$ and that $f(y_0) - f^{\ast} < 1$. Suppose that we find an iterate
  $y_i$ such that
    \begin{equation}
      f(y_i) - f^{\ast} \leq \left(f(y_0) - f^{\ast}\right)^{1 + \eta_{\mathsf{est}}}.
      \label{eq:superlinear-improvement}
    \end{equation}
In this case, we suggest that $\bundle$ return the first such iterate $\tilde x = y_i$.
\end{enumerate}
 It is possible to show that these strategies still result in superlinear improvement, but we do not pursue this result here.

\subsubsection{Updating $\eta_{\mathsf{est}}$ in $\superpolyak$}

Item~\ref{remark:earlyterm:super} in Section~\ref{sec:earlyterm} may never be triggered if $\eta_{\est}$ is too large. We suggest the following simple update strategy.
Suppose that~\eqref{eq:superlinear-improvement} fails for all candidates $y_i$
but $x_{\outind+1} = \mathtt{PolyakBundle}(x_{\outind}, (3/2)^{\outind})$.
Then we update
\[
  \eta_{\mathsf{est}} = \max\left(\eta_{\mathsf{lb}}, q \cdot \eta_{\mathsf{est}}\right),
  \quad \text{where} \quad \eta_{\mathsf{lb}} \geq 0, \; q \in (0, 1).
\]
In our implementation, we set $\eta_{\mathsf{est}} = 1$,
$\eta_{\mathsf{lb}} = 0.1$, and $q = 0.9$.
It is straightforward to show that the iterates $x_k$ continue converge to superlinearly with this estimation strategy.

\subsubsection{Less frequent calls to $\bundle$}\label{sec:omegagamma}
In early iterations of  Algorithm~\ref{alg:bundle-newton-method}, the $\bundle$ procedure may not succeed. To reduce wasted computation, one may
\begin{enumerate}
\item Replace the factor $(3/2)^k$ in line~\ref{alg:threehalf} with $\omega^k$ for some $
\omega > 1$;
\item Replace the factor  $1/2$ in line~\ref{alg:fallbackstep} with some $0 < \gamma< 1$.
\end{enumerate}
In particular, it is straightforward to show that the iterates $x_k$ continue to converge superlinearly whenever $\omega \gamma < 1$.
These changes have separate effects.
Reducing $\omega$ makes the early termination strategy in Item~\ref{remark:earlyterm:large} of the strategies outlined in Section~\ref{sec:earlyterm} more likely to be triggered, lowering the cost of early unsuccessful $\bundle$ steps.
On the other hand, reducing $\gamma$ results in less frequent calls to $\bundle$.
%}

\subsubsection{Computing $\bundle$ in $O(d^3)$ operations}\label{sec:fasterbundle}

Each step of $\bundle$ requires the evaluation of $A_{\inner}^{\dag} w$ for some $w
\in \RR^{\inner}$.
When $\inner \leq d$, the pseudoinverse of an $\inner \times d$ matrix may be computed explicitly
in $O(d\inner^2)$ floating point operations (flops)~\cite{GVL13}.
Consequently, using this method, one may compute all matrices $A_{\inner}^{\dag}$ (for $\inner = 1, \ldots, d$) with $O(d^4)$ flops.
In this section, we point out that there exists a more efficient method for ``updating" the pseudoinverse $A_{\inner}^{\dag}$.
Each update requires $O(d^2)$ flops, bringing the total cost down to $O(d^3)$ flops. The method is based on iteratively updating
the QR decomposition of $A_{\inner}^{\T}$, which allows efficient evaluation of $A_{\inner}^\dag w$ for arbitrary $w \in
\RR^{\inner}$. We place the proof and description of the algorithm in Appendix~\ref{appendix:bundleefficient}.

\begin{proposition}\label{prop:fastbundle}
Consider the setting of Proposition~\ref{corollary:function-value-reduction}.
Then there exists a method to return the point $\tilde x = \bundle(x, \tau)$
using at most $O(d^3)$ flops (ignoring the cost of evaluations of $f$ and $\subg$).
\end{proposition}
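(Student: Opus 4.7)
The plan is to maintain, incrementally across iterations of $\bundle$, a thin QR factorization of the transpose $A_{\inner}^{\T} \in \RR^{d \times \inner}$, and to use it to evaluate each application $A_{\inner}^{\dag} w$ cheaply. Concretely, I will track $Q_{\inner} \in \RR^{d \times \inner}$ with orthonormal columns and an upper triangular $R_{\inner} \in \RR^{\inner \times \inner}$ satisfying $A_{\inner}^{\T} = Q_{\inner} R_{\inner}$. When the algorithm appends the new row $v_{\inner}^{\T}$ to form $A_{\inner+1}$, the transpose $A_{\inner+1}^{\T}$ is obtained by appending $v_{\inner}$ as a new column. A rank-one QR update then suffices: compute $c := Q_{\inner}^{\T} v_{\inner} \in \RR^{\inner}$, form the residual $r := v_{\inner} - Q_{\inner} c$, and (when $r \neq 0$) set $q_{\inner+1} := r / \norm{r}$ and
\[
Q_{\inner+1} := \bmx{Q_{\inner} & q_{\inner+1}}, \qquad R_{\inner+1} := \bmx{R_{\inner} & c \\ 0 & \norm{r}}.
\]

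Second, I will use this factorization to evaluate $A_{\inner+1}^{\dag} w$ whenever $A_{\inner+1}$ has full row rank. The identity $A_{\inner+1}^{\dag} = Q_{\inner+1} R_{\inner+1}^{-\T}$ reduces each application to a triangular solve $R_{\inner+1}^{\T} u = w$, costing $O(\inner^2)$, followed by the matrix-vector product $Q_{\inner+1} u$, costing $O(d \inner)$. Likewise, the QR update itself costs $O(d \inner)$: computing $Q_{\inner}^{\T} v_{\inner}$ dominates, while the subsequent axpy and normalization are $O(d)$. Summing these costs over $\inner = 1, \ldots, d$ yields a total of $O(d^3)$ flops, which is the claimed bound.

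Third, I will handle the case in which $A_{\inner+1}$ is rank deficient, i.e.\ $\|r\| = 0$ (or numerically negligible). In that situation, the new row lies in the span of the previous rows, so by the early-termination rule in Section~\ref{sec:earlyterm} the procedure simply returns the best iterate seen so far; no further update of $Q$ or $R$ is required. This keeps the scheme mathematically consistent with $\bundle$ as analyzed in Theorem~\ref{corollary:function-value-reduction}, because the iterates $y_{\inner}$ produced via the QR-based evaluation of $A_{\inner}^{\dag}$ coincide exactly with those defined in Algorithm~\ref{alg:build-bundle-method}.

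The most delicate step is verifying that the QR-based evaluation is genuinely equivalent to the pseudoinverse formula used to define $y_{\inner}$, even as $\inner$ grows and the $R_{\inner}$ factors become ill-conditioned; this equivalence hinges on full row rank of $A_{\inner}$, which is precisely the setting covered by the lemma of alternatives in Lemma~\ref{lemma:manifold-alternatives}. The other (routine) obstacle is bookkeeping of the flop counts, but these follow from the standard cost of Gram--Schmidt-type QR updates and triangular solves as described in~\cite{GVL13}. Full pseudocode and a precise flop-by-flop accounting will be deferred to the appendix.
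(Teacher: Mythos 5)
Your proposal is correct and follows essentially the same route as the paper's own argument: both maintain an incrementally updated (thin) QR factorization of $A_{\inner}^{\T}$, apply $A_{\inner}^{\dag} = Q R^{-\T}$ via a triangular solve plus a matrix--vector product in $O(d\inner)$ flops per step, and invoke the rank analysis of Lemma~\ref{lemma:manifold-alternatives} together with the early-termination rule to dispose of the rank-deficient case, giving $O(d^3)$ total. The only cosmetic difference is that the paper performs one explicit $O(d^3)$ pseudoinverse application upon detecting rank deficiency before exiting, whereas you return the best iterate seen so far; neither choice affects the stated bound.
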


\subsection{Numerical illustration}
We now briefly illustrate the numerical performance of $\superpolyak$ on several signal recovery applications, including low-rank matrix sensing, max-linear regression, phase retrieval, and compressed sensing.
In each experiment, we run $\superpolyak$ with a natural first-order fallback method, which we also use a baseline method for comparison against.
Our main finding is that $\superpolyak$ often improves -- both in oracle complexity and time -- on several first-order fallback methods, including the Polyak subgradient method, the method of alternating projections, and the classical fixed-point iteration.
Our code is available at the following URL: \url{https://github.com/COR-OPT/SuperPolyak.jl}.

\paragraph{Implementation Details.} Throughout we use the default scaling factors $\omega = \frac{3}{2}$ and
$\gamma = \frac{1}{2}$ described in Section~\ref{sec:omegagamma}.
We also use the algorithmic enhancements from~\cref{sec:implementation}.
In each experiment, we fix a minimizer $\bar x$ of $f$ and initialize both $\superpolyak$ and the fallback method at a uniform random point $x$ satisfying $\|\bar x -  x\|/\|\bar x\| = 1$; an exception is the basis pursuit experiment of Section~\ref{sec:compressedsensing}, which is initialized at the zero vector.
In each experiment, the problem data and initializer are chosen randomly; we found that the depicted behavior was similar across multiple runs of the algorithm, so we plot only one instance in each figure.
With the exception of the experiment in Figure~\ref{fig:conditioning-hadamard}, all generalized gradients $g$ were computed via the automatic differentiation library $\mathtt{ReverseDiff.jl}.$
The experiments in Figures~\ref{fig:signal-recovery-problems},~\ref{fig:max-linear-regression},~\ref{fig:complex-phase-retrieval},
and~\ref{fig:lasso-regression} were performed on an Intel Core i7-7700 CPU desktop with 16GB of RAM running Manjaro Linux.
The experiment in Figure~\ref{fig:conditioning-hadamard} was performed on a shared Intel Xeon E5-2680 (v3) cluster with a 16GB RAM limit
running Ubuntu Linux. We used Julia v1.6.1 in both environments.

\subsubsection{Low-rank matrix-sensing and $\polyak$}
\label{sec:matrix-sensing}

In this problem, we observe a measurement vector $\bar  y \in \Rbb^m$ satisfying
\begin{equation*}
 \bar  y = \cA(\bar  M) + \bar \xi,
\end{equation*}
where $\bar M \in \RR^{d \times d}$ is a fixed rank $r$ matrix, $\cA: \Rbb^{d \times d} \to \Rbb^m$ is a linear operator and
$\bar \xi \in \Rbb^m$ is a ``noise" vector.
The goal of the low-rank matrix sensing problem is to recover $\bar M$.
Recoverability depends on the operator $\cA$.
In this section, we consider linear operators $\cA$ with rows $\cA_i$, satisfying
$$
\cA_i(M) = \dotp{\ell_i, M r_i} \qquad  \text{for some $\ell_i, r_i \in \RR^d$ and all $M \in \RR^{d\times d}$}.
$$
The work~\cite{CCD+21} analyzes the following objective for this problem class:
\begin{align}\label{eq:lowrankobjective}
 f(U, V) := \frac{1}{m} \norm{\cA(UV^{\T}) - y}_1 \qquad \text{for all $U, V \in \RR^{d \times r}$},
\end{align}
In particular, \cite{CCD+21} shows that $f$ satisfies~\ref{item:assumption:main:sharpness} when (i) $\ell_i, r_i$ are i.i.d.\ standard Gaussian vectors, (ii) $m \gtrsim rd$,  and (iii) at most a small constant fraction of entries of $\bar \xi$ are nonzero.
Moreover, any solution $(\bar U, \bar V)$ satisfies $\bar M = \bar U \bar V^\T$. Thus, Proposition~\ref{prop:basicsemialgebraic} implies that $f$ satisfies~\ref{item:assumption:main:b}.

We perform experiments with $\superpolyak$ using $\polyak$ as the fallback method in two different settings. In both settings, we set $\bar \xi = 0$, which leads to optimal value $f^\ast = 0$. Note that even in this setting, the nonsmooth $\ell_1$ is penalty is preferable to $\ell_2$ since it leads to better conditioning~\cite{CCD+21}. Note also that the total number of parameters we optimize over is $2dr$.
\begin{enumerate}
\item {\bf (Varying dimensions/ranks)} In this setting, we choose $\bar M = \bar U\bar V^\T$ where $\bar U, \bar V \in \RR^{d \times r}$ are uniform random $d\times r$ matrices with orthonormal columns.
In addition, we choose $\ell_i, r_i$ to be i.i.d.\ standard Gaussian vectors.
Figure~\ref{fig:signal-recovery-problems} then compares $\superpolyak$ to $\polyak$ to for varying $(d, r)$ and $m = 3rd$.
\item {\bf (Effect of conditioning of $\bar M$)} In this setting, we choose $\bar M = \bar U\Lambda\bar V^\T$ where $\Lambda \in \RR^{r\times r}$ is a diagonal matrix with condition number $\tilde \kappa$ and $\bar U, \bar V \in \RR^{d \times r}$ are uniform random $d\times r$ matrices with orthonormal columns. We then compare  $\superpolyak$ to $\polyak$ to for varying $\tilde \kappa$ under two measurement models:
\begin{enumerate}
\item {\bf (Gaussian measurements/small dimension)} Figure~\ref{fig:conditioning-gaussian} chooses $\ell_i, r_i$ to be i.i.d.\ standard Gaussian vectors. Here, $2dr = 4000$.
\item {\bf (Hadamard measurements/medium dimension)} Figure~\ref{fig:conditioning-hadamard} presents a similar, but larger scale comparison using measurements $\ell_i$ and $r_i$ from a random Hadamard ensemble (which allows for faster matrix vector products); see~\cite[Section 6.3]{DR18} for a formal description of the measurements. Here, $2dr = $ 131,072.
\end{enumerate}
\end{enumerate}
In the experiments, $\superpolyak$ converges superlinearly and requires only a fraction of the oracle calls to $\subg$ compared $\polyak$.
With the exception of a low-dimensional instance in Figure~\ref{fig:conditioning-gaussian}, the phenomenon persists when comparing CPU times.

\begin{figure}[h!]
  \centering
  \includegraphics[width=0.95\textwidth]{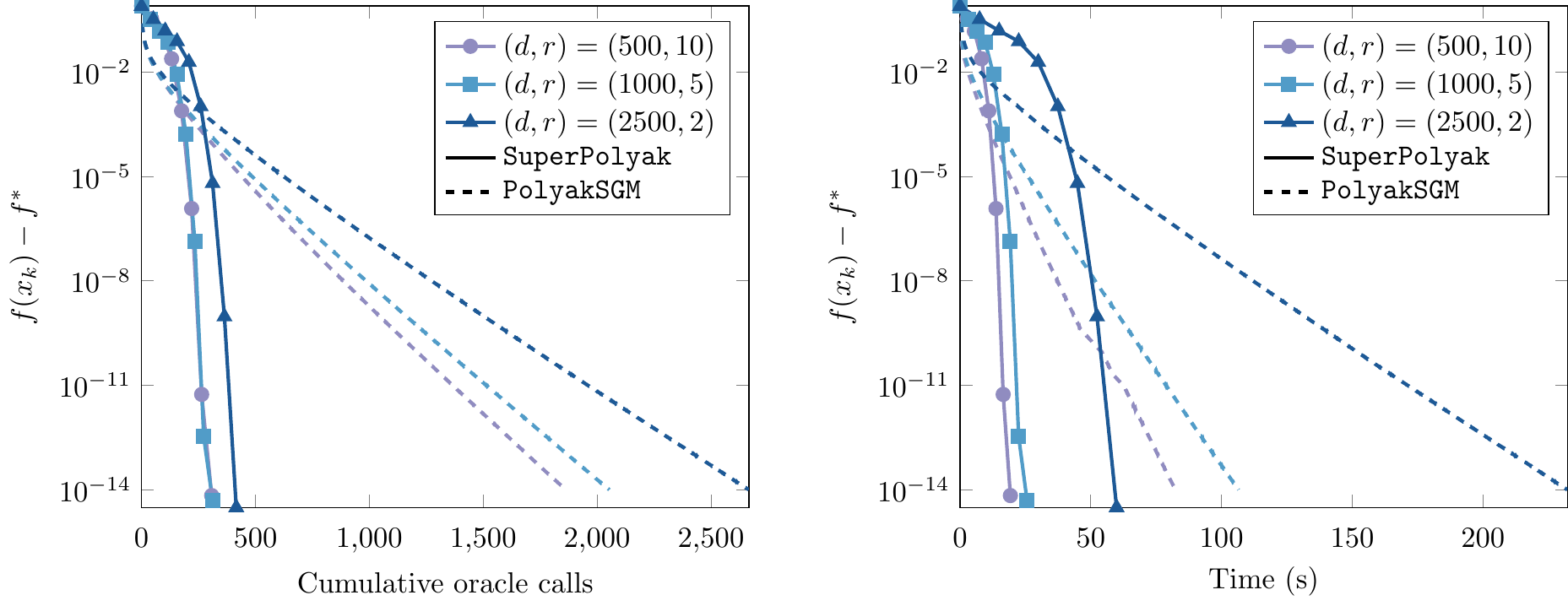}
  \caption{Low-rank matrix sensing with Gaussian measurements, varying dimension/ranks, and $m = 3rd$. See~\cref{sec:matrix-sensing} for description.}
  \label{fig:signal-recovery-problems}
\end{figure}
\begin{figure}[h]
  \centering
  \includegraphics[width=0.95\textwidth]{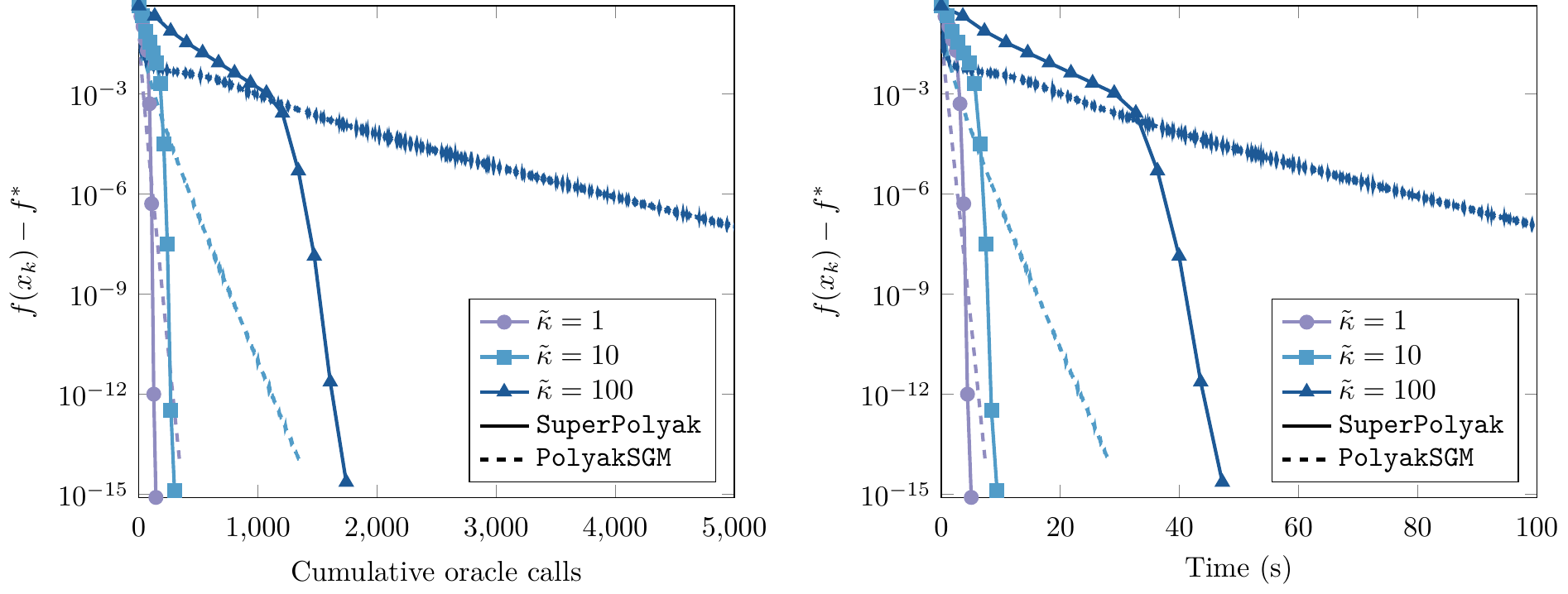}
  \caption{Low-rank matrix sensing with Gaussian measurements, varying condition number $\tilde \kappa$, and parameters $d = 500$, $r = 4$ and $m = 5rd$.
  See~\cref{sec:matrix-sensing} for description.}
  \label{fig:conditioning-gaussian}
\end{figure}

\subsubsection{Max-linear regression and $\polyak$}\label{sec:maxlinear}
In this problem, we observe a measurement vector $\bar y \in \RR^m$ satisfying
\begin{equation*}
 \bar  y_i = \max_{j \in [r]} \set{ \dotp{ \bar \beta_j, a_i} } \qquad \text{ for $i = 1, \ldots, m$},
\end{equation*}
for known standard Gaussian vectors $a_i$ ($i \in [m]$) and unknown vectors $  \bar\beta_j$ ($j \in [r]$).
This problem is an instance of the support function regression problem~\cite{PW90,Gun12}, where we observe several random evaluations of the support function of $\text{conv}\{ \bar\beta_1, \ldots,  \bar\beta_r\}$ and we seek to recover the vertices $ \bar\beta_j$.
To recover $ \bar\beta_1, \dots,  \bar\beta_r$,
we optimize the following objective
\begin{equation*}
 f(\beta_1, \ldots, \beta_r) :=
  \frac{1}{m} \sum_{i=1}^m \abs{y_i - \max_{j \in [r]} \set{\dotp{\beta_j, a_i}}} \qquad \text{ for all $\beta_1, \ldots, \beta_r \in \RR^d$.}
\end{equation*}
We do not attempt to verify assumptions~\ref{item:assumption:main:sharpness} and~\ref{item:assumption:main:b} for this problem class.
Instead, we note that if the solution set is isolated, semialgebraicity of $f$ implies that we~\ref{item:assumption:main:b} holds (see Proposition~\ref{prop:basicsemialgebraic}). On the other hand, verifying Assumption~\ref{item:assumption:main:sharpness} is remains an intriguing open problem.

We now turn to our experiment. For simplicity, we sample each $\bar \beta_j$ uniformly from $\sphere^{d-1}$. Then in Figure~\ref{fig:max-linear-regression}, we apply $\superpolyak$ with fallback method $\polyak$.
Again we see that $\superpolyak$ outperforms the $\polyak$ method and appears insensitive to the number of problem parameters $dr$.

\begin{figure}[h!]
  \centering
  \includegraphics[width=0.95\textwidth]{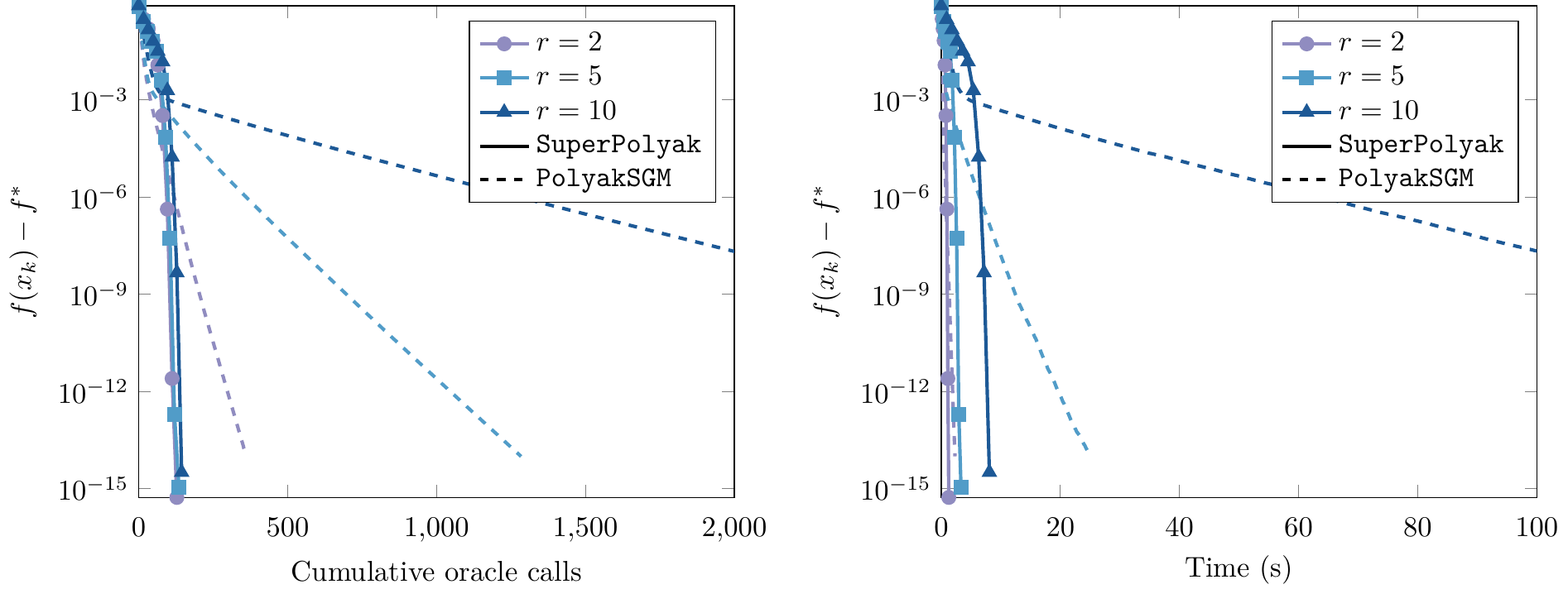}
  \caption{Max-linear regression with Gaussian measurements, varying $r$, and parameters $d = 500$ and $m = 3dr$. See~\cref{sec:maxlinear} for description.}
  \label{fig:max-linear-regression}
\end{figure}

\subsubsection{Phase retrieval and the method of alternating projections}
\label{sec:phase-retrieval-altproj}
In this problem, we observe a measurement vector $\bar{y} \in \RR^m$ satisfying
\[
  \bar{y}_i = \abs{\dotp{a_i, \bar{x}}} \qquad \text{for $i = 1, \dots, m$},
\]
for known measurement vectors $a_i$ $(i \in [m])$ and an unknown signal
$\bar{x} \in \CC^d$, with the goal of recovering $\bar{x}$ up to phase.%
\footnote{For this section, recall that $\dotp{x, y} = \trace(x^{\mathsf{H}} y)$ where $x^{\mathsf{H}}$ is
  the conjugate transpose of $x$ and $\abs{x} = \sqrt{\Re^2(x) + \Im^2(x)}$
for any $x$, $y \in \CC^d$.
As usual, we also identify $\CC^d$ with $\RR^{2d}$ in order to apply the results of this manuscript. To be consistent with the rest of the notation of the paper, we use $i$ as an index, not the imaginary unit.}
To recover $\bar x$, we consider the feasibility formulation:
\begin{equation}
  \mathrm{find} \quad \hat {y} \in \cY_1 \cap \cY_2 \qquad
  \text{where} \qquad
  \cY_1 := \set{
    u \in \CC^m \mid \abs{u} = y
  }; \;\; \cY_2 := \range(A);
  \label{eq:waldspurger-set-intersection}
\end{equation}
and $A \in \CC^{m \times d}$ is the matrix whose $i$th row is $a_i^{\mathsf{H}}$. Given $\hat y$ in the intersection, we then estimate $\bar{x}$ with $\hat{x} = A^{\dag} \hat{y}$.
Note that when $A$ is generic and $m \geq 4d - 4$, any such solution $\hat x$ is unique up to a global phase~\cite{BCE06,CEHV15}.
To solve this feasibility formulation, we consider the following objective:
\[
  f(y) = \dist(y, \cY_1) + \dist(y, \cY_2).
\]
Since $\cY_1$ and $\cY_2$ are smooth manifolds,  Corollary~\ref{cor:feasibilityfinal} shows that $f$ satisfies~\ref{item:assumption:main:b} at any point $\hat y \in \cY_1 \cap \cY_2$.
On the other hand, we were not able to locate property~\ref{item:assumption:main:sharpness} in the literature, even when the $a_i$ follow a complex Gaussian distribution. Nevertheless, there is reason to believe it holds in the Gaussian setting, since the method of alternating projections (described in~\ref{example:feasibility}) locally linearly converges to an element of $\cY_1 \cap \cY_2$~\cite{Waldspurger18}.

We now turn to our experiment. We generate $A$ with i.i.d.\ complex Gaussian entries
and sample $\bar{x}$ uniformly from the unit sphere in $\CC^d$, using $m = 4d$ measurements
for varying dimension $d$. In~\cref{fig:complex-phase-retrieval}, we apply $\superpolyak$
with the method of alternating projections (see Example~\ref{example:feasibility}) as the fallback method.
Here, the oracle complexity of $\superpolyak$ is the number of evaluations of $P_{\cY_1} \circ P_{\cY_2}$ plus the number of subgradient evaluations of $f$.
We see that $\superpolyak$ improves upon the method of alternating projections both in terms of oracle complexity and time.

\begin{figure}[h]
  \centering
  \includegraphics[width=0.95\textwidth]{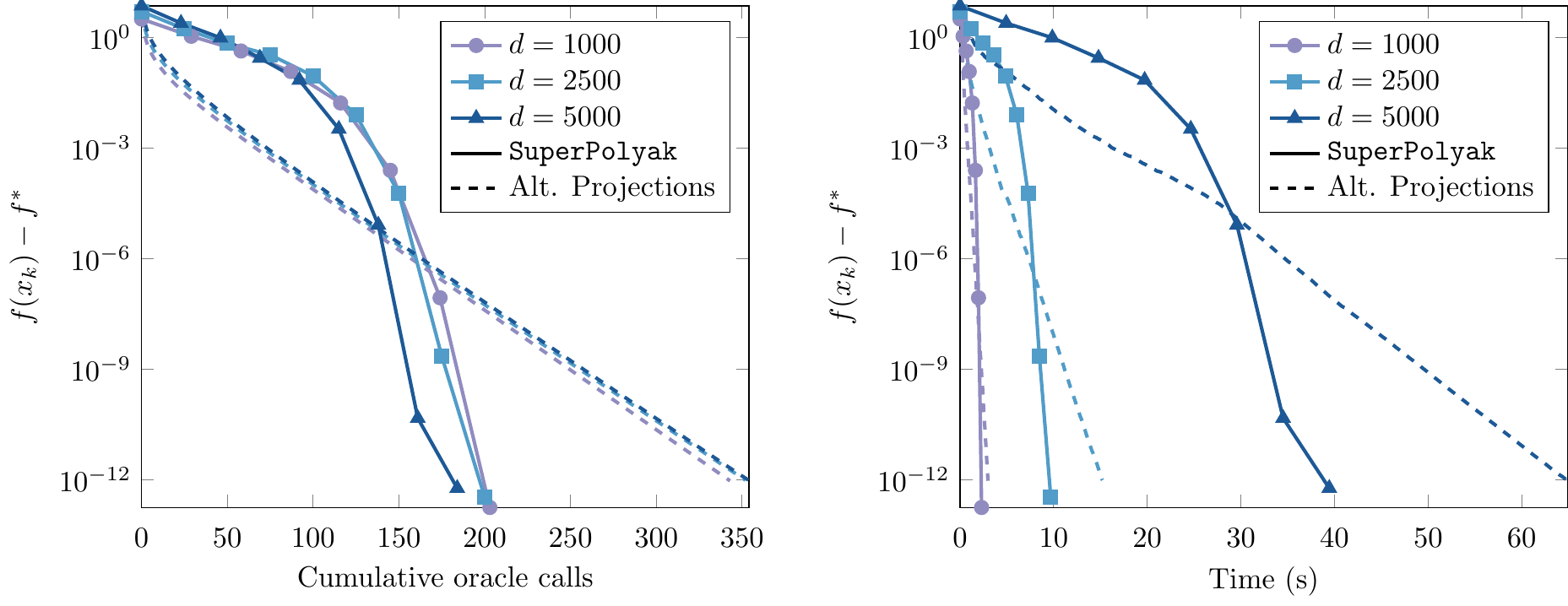}
  \caption{Complex phase retrieval with $m = 4d$ Gaussian measurements.
  See~\cref{sec:phase-retrieval-altproj} for description.}
  \label{fig:complex-phase-retrieval}
\end{figure}

\subsubsection{Compressed sensing and the proximal gradient method}\label{sec:compressedsensing}

In this problem, we observe a measurement vector $\bar y \in \RR^m$ satisfying
\begin{equation*}
  \bar y = A\bar x + \xi,%,
\end{equation*}
where $A \in \RR^{m\times d}$ is a known matrix, $\xi$ is an unknown noise vector, $\bar x$ is an unknown sparse vector.
The goal of the compressed sensing problem~\cite{Don06} is to recover $\bar x$ when $m$ is on the order of the number of nonzeros of $\bar x$.
There are several optimization based formulations for finding $\bar x$.
For simplicity we focus on the ``basis pursuit'' formulation~\cite{CDS01}, which solves
the following $\ell_1$-penalized least squares problem:
\begin{equation*}
 h(x) := \frac{1}{2}\norm{Ax - y}^2 + \lambda \norm{x}_1 \qquad \text{for all $x \in \RR^d$.}
  \label{eq:cs-lasso}
\end{equation*}
A standard approach for minimizing $h$ is the proximal gradient method, which iterates
\begin{align}
x_{\inner+1} := T(x_\inner),   \label{eq:prox-grad-step}
\end{align}
where for fixed $\tau > 0$ we define $T(x) := \prox_{\lambda \norm{\cdot}_1}\left(x - \tau A^{\T} (Ax - y)\right)$ for all $x \in \RR^d$. This motivates us to consider the following objective
$$
f(x) := \norm{x - Tx} \qquad \text{for all $x \in \RR^d$},
$$
which has the same minimizers as $h$ and has minimal value $f^\ast = 0$. This objective satisfies~\ref{item:assumption:main:sharpness} automatically and the fixed-point iteration~\eqref{eq:prox-grad-step} is a valid algorithmic mapping in the sense of~\ref{item:assumption:main:fallback}; see~\cite{tseng2010approximation}.
Moreover, when $A$ is drawn from a continuous distribution, the minimizer of $f$ is unique for any positive $\lambda$ with probability 1~\cite{Tibs13}.
Consequently, since $f$ is semialgebraic, Proposition~\ref{prop:rootfinding} shows that it satisfies~\ref{item:assumption:main:b} (see also Corollary~\ref{cor:semib} below).

We now turn to our experiment. Here, we choose $A$ with i.i.d.\ Gaussian entries and we vary dimension $d$, the number of nonzeros $s$ of $\bar x$, and the number of measurements $m$.
Then in Figure~\ref{fig:lasso-regression}, we apply $\superpolyak$ with fallback method~\eqref{eq:prox-grad-step}.
Here, the oracle complexity of $\superpolyak$ is the number of evaluations of $T$ plus the number of subgradient evaluations of $f$.
We find that $\superpolyak$ converges superlinearly and outperforms the fixed-point iteration~\eqref{eq:prox-grad-step} in both oracle evaluations and CPU time.

\begin{figure}[H]
  \centering
  \includegraphics[width=0.95\textwidth]{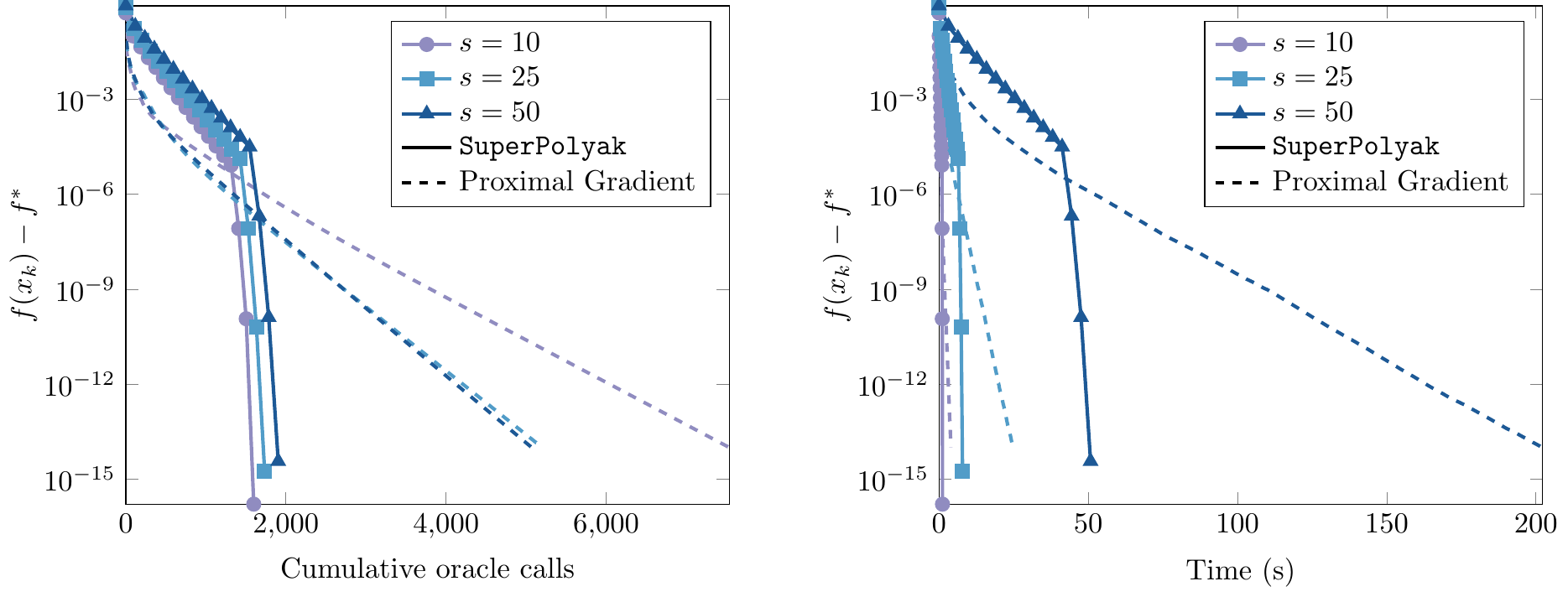}
  \caption{Basis pursuit with Gaussian measurements, varying sparsity $s$, and parameters $d = 100s$ and $m = 10s$.
  See~\cref{sec:compressedsensing} for description.}
  \label{fig:lasso-regression}
\end{figure}

\bibliographystyle{plain}
\bibliography{bibliography}

\appendix

\section{Proofs of auxiliary results}

\subsection{Proofs from Section~\ref{sec:experiments}}\label{appendix:experimentproofs}

\subsubsection{Proof of Proposition~\ref{prop:fastbundle}}\label{appendix:bundleefficient}

In this section, we briefly sketch how to compute the iterates $y_i$ of $\bundle$ by incrementally updating the ``reduced $QR$ decomposition" of $A^\T$. We begin with the following Lemma, which follows immediately from~\cite[Section 5.5.5]{GVL13}.
\begin{lemma}
  \label[lemma]{lemma:qr-apply-pseudoinverse}
  Consider $A \in \Rbb^{m \times n}$ with $m \leq n$ and $\rank(A) = m$. Then
  $A^\dag = QR^{-\T}$, where $A^{\T} = QR$ is the reduced QR decomposition of $A^{\T}$. Moreover, given any $b \in \RR^m$, the vector $A^\dag b$ can be computed in time $O(nm)$.
\end{lemma}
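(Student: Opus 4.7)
The plan is to directly verify the formula $A^\dagger = QR^{-\T}$ by reducing to the standard expression for the Moore--Penrose pseudoinverse of a full row rank matrix, then analyze the arithmetic cost of applying this expression to a vector.

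First I would establish the existence and structure of the reduced QR decomposition of $A^\T$. Since $A \in \RR^{m \times n}$ has rank $m$ with $m \leq n$, the matrix $A^\T \in \RR^{n \times m}$ has full column rank $m$. Therefore it admits a reduced QR factorization $A^\T = QR$ with $Q \in \RR^{n \times m}$ having orthonormal columns (so $Q^\T Q = I_m$) and $R \in \RR^{m \times m}$ upper triangular with nonzero diagonal entries, hence invertible. I would cite \cite{GVL13} for the existence of this factorization.

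Next I would derive the formula. Because $A$ has full row rank, the Moore--Penrose pseudoinverse admits the closed form $A^\dagger = A^\T (AA^\T)^{-1}$, which is the minimum-norm right inverse. Substituting $A^\T = QR$ gives
\begin{equation*}
AA^\T = (QR)^\T (QR) = R^\T Q^\T Q R = R^\T R,
\end{equation*}
which is invertible with inverse $R^{-1} R^{-\T}$. Therefore
\begin{equation*}
A^\dagger = A^\T (AA^\T)^{-1} = QR \cdot R^{-1} R^{-\T} = Q R^{-\T},
\end{equation*}
as claimed.

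For the complexity bound, given $b \in \RR^m$, I would compute $A^\dagger b = Q(R^{-\T} b)$ in two stages. First, obtain $z := R^{-\T} b$ by solving the lower triangular system $R^\T z = b$ via forward substitution; this costs $O(m^2)$ flops. Second, form the matrix-vector product $Qz$ in $O(nm)$ flops. Since $m \leq n$, the total cost is $O(nm)$. This is the main (and only mildly nontrivial) step, and it is entirely routine. No step here presents any serious obstacle: the proof is essentially an algebraic identity plus a standard triangular-solve-plus-matvec cost analysis.
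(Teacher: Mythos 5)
Your proof is correct and follows the standard derivation: the paper simply cites \cite[Section 5.5.5]{GVL13} for this fact, and your argument (full row rank pseudoinverse formula $A^\dagger = A^\T(AA^\T)^{-1}$, substitution of $A^\T = QR$, then a triangular solve plus a matrix--vector product for the $O(nm)$ cost) is exactly the textbook reasoning behind that reference.
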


From~\cref{lemma:qr-apply-pseudoinverse}, it follows that computing
\[
  y_{\inner} = y_{0} - A_{\inner}^{\dag}\bmx{
  f(y_j) - f^{\ast} + \dotp{v_j, y_0 - y_j}}_{j=0}^{\inner}
\]
in~\cref{alg:build-bundle-method} is possible in time $O(d\inner)$ as long
as the QR decomposition of $A_{\inner}^{\T}$ is available and $A_{\inner}$ is
full row rank. Recalling the Lemma~\ref{lemma:manifold-alternatives}, we observe that if $A_\inner$ is rank deficient, we must have already obtained superlinear improvement. At that point, no further iterates $y_i$ need be computed (as suggested in~\cref{sec:earlyterm}). Thus, we now sketch how to efficiently maintain the QR decomposition of $A_{\inner}^{\T}$ while $A_i^\T$ is full rank:
\begin{enumerate}
  \item Initially, $A_1^{\T} = v_0$ and computing its QR factorization is trivial.
  \item At step $\inner$, we have $A_{\inner+1}^{\T} = \bmx{A_{\inner}^{\T} & v_{\inner}}$.
    where $A_{\inner}$ is full rank and its QR factorization is known. We consider the
    following cases:
    \begin{itemize}
      \item If $A_{\inner+1}$ remains full rank, we can compute its QR factorization
        with $O(d^2)$ flops using the algorithm from~\cite[Section 6.5.2]{GVL13}.
      \item If $A_{\inner+1}$ becomes rank-deficient, we discard the maintained
        QR decomposition, compute the product $A_{\inner+1}^{\dag} w$ explicitly
        using $O(d^3)$ flops, and exit the algorithm.
    \end{itemize}
\end{enumerate}
The above procedure requires $O(d^2)$ flops for every QR update step and $O(d^3)$ for
applying $A_{\inner}^{\dag}$ if $A_{\inner}$ becomes rank-deficient. The former can
happen at most $d$ times, while the latter clearly happens at most once. Therefore,
the total cost is $O(d^3)$ flops.

\vspace{10pt}

\noindent Finally, we briefly discuss the storage requirements of the above algorithm.
The incremental update algorithm of~\cite[Section 6.5.2]{GVL13} requires computing
the product $Q^{\T} v_i$, where $Q$ is the $d \times d$ orthogonal matrix from the
{full} QR factorization of $A_i^{\T}$. Implemented naively, this requires storing
$O(d^2)$ elements for $Q$. However, we can take advantage of the so-called
\textit{compact WYQ} format~\cite{SvL89} to decompose $Q$ as:
\[
  Q = I - U T U^{\T}, 
\]
for certain $U \in \RR^{d \times \inner}$ and upper triangular $T \in \RR^{\inner \times \inner}$. Given $U$ and $T$, we can compute
\(
Q^{\T} v_i = v_i - U^{\T} T^{\T} U v_i
\)
in $O(di)$ flops; moreover, the compact WYQ representation can be updated
in time $O(d^2)$ after adding a column to $A_i^{\T}$. Therefore, the algorithm
retains its computational complexity and requires storing at most $O(d\ell)$ numbers,
where $\ell$ is the maximal iteration index.

\subsection{Proofs from~\cref{proof:corollary:function-value-reduction}}
\label[subsection]{sec:build-bundle-missing-proofs}

\subsubsection{Proof of Proposition~\ref{proposition:build-bundle-sval-lower-bound}}
\label{sec:proof:proposition:build-bundle-sval-lower-bound}
The proof is a consequence of the following lemma.

\begin{lemma}
  \label[lemma]{lemma:progress-bundle-one-step-general}
  Consider a matrix $A \in \Rbb^{m \times d}$ and let
  $v \in \RR^{d}$ satisfy
  \[
    \norm{v} \leq L; \quad \norm{P_{\ker(A)}(v)} > \alpha > 0.
  \]
  Suppose that $\rank(A) = k$ for some $k \leq d$. Then the following holds:
  \begin{equation}
    \sigma_{k+1}\left(\bmx{A \\ v^{\T}}\right) \geq
    \frac{\alpha}{\sqrt{2}} \, \min\set{1, \left(\frac{\sigma_k(A)}{L}\right)}.
  \end{equation}
\end{lemma}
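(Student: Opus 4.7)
The plan is to invoke the Courant--Fischer characterization
\[
  \sigma_{k+1}(B)^{2} \;=\; \max_{\dim S = k+1}\;\min_{x \in S,\, \|x\|=1}\, \|Bx\|^{2},
  \qquad B := \begin{bmatrix} A \\ v^{\T} \end{bmatrix},
\]
applied to a carefully chosen $(k+1)$-dimensional subspace, and then reduce the lower bound to the smallest eigenvalue of an explicit $2\times 2$ matrix. Since $\|P_{\ker(A)}(v)\| > 0$, the vector $v$ does not lie in $\range(A^{\T})$, so $S := \range(A^{\T}) + \spann(v)$ has dimension $k+1$. Write any $x \in S$ orthogonally as $x = y + t w$, with $y \in \range(A^{\T})$, $w := P_{\ker(A)}(v)/\beta$, and $\beta := \|P_{\ker(A)}(v)\|$. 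Let $u := P_{\range(A^{\T})}(v)$, so that $\gamma := \|u\| \leq \|v\| \leq L$, and $\beta^{2}+\gamma^{2} = \|v\|^{2} \leq L^{2}$.

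Using $Aw = 0$ and the orthogonal decomposition $v = u + \beta w$, a direct computation yields
\[
  \|Bx\|^{2} \;=\; \|Ay\|^{2} + (u^{\T} y + t\beta)^{2}
  \;\geq\; \sigma_{k}^{2}\,\|y\|^{2} + (u^{\T} y + t\beta)^{2},
\]
where $\sigma_{k} := \sigma_{k}(A)$. Setting $s := \|y\|$ with $s^{2} + t^{2} = 1$, and observing that $u^{\T} y$ ranges in $[-\gamma s,\gamma s]$ as $y$ varies over $\range(A^{\T})$ with $\|y\| = s$, the worst case reduces the minimization of $\|Bx\|^{2}$ over the unit sphere of $S$ to the $2$-dimensional quadratic form
\[
  \min_{s^{2}+t^{2}=1}\; \sigma_{k}^{2}\, s^{2} + (t\beta - \gamma s)^{2}
  \;=\; \lambda_{\min}(M), \qquad
  M := \begin{pmatrix} \sigma_{k}^{2}+\gamma^{2} & -\beta\gamma \\ -\beta\gamma & \beta^{2} \end{pmatrix}.
\]

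The final step is to extract a clean closed-form lower bound on $\lambda_{\min}(M)$. A short computation gives $\det(M) = \beta^{2}\sigma_{k}^{2}$ and $\trace(M) = \sigma_{k}^{2} + \beta^{2} + \gamma^{2} \leq \sigma_{k}^{2} + L^{2}$, and hence
\[
  \lambda_{\min}(M) \;=\; \frac{\det(M)}{\lambda_{\max}(M)}
  \;\geq\; \frac{\det(M)}{\trace(M)}
  \;\geq\; \frac{\beta^{2}\,\sigma_{k}^{2}}{\sigma_{k}^{2}+L^{2}}
  \;\geq\; \frac{\alpha^{2}}{2}\,\min\!\left\{1,\, \frac{\sigma_{k}^{2}}{L^{2}}\right\},
\]
where the last step splits on whether $\sigma_{k}\leq L$ or not. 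Taking square roots yields the claimed bound. The only remaining subtlety I anticipate is the case analysis where $t\beta < \gamma s$ (so that the worst-case cross term $u^{\T} y$ is no longer $-\gamma s$ but rather $-t\beta$, making the second square vanish); in that regime $s \geq \beta/\|v\|$, and the bound $\sigma_{k+1}(B)^{2} \geq \sigma_{k}^{2} s^{2} \geq \sigma_{k}^{2}\beta^{2}/L^{2}$ is in fact stronger than $\lambda_{\min}(M)$, so the two-dimensional matrix bound is actually the binding one.
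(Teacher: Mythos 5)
Your proof is correct and takes a genuinely different route from the paper's. The paper works with the eigenvalue identity $\sigma_{k+1}^2\bigl(\begin{smallmatrix}A\\v^\T\end{smallmatrix}\bigr) = \lambda_{k+1}(A^\T A + vv^\T)$, replaces the rank-one perturbation $\|v\|^2 \bar v\bar v^\T$ by $c\,ww^\T$ (with $w$ the normalized projection of $v$ onto $\ker A$ and $c=\min\{\sigma_k^2,\|v\|^2\}$), notes that $\lambda_{k+1}(A^\T A + c\,ww^\T)=c$ exactly because $w\in\ker A$, and then controls the error via Weyl's inequality together with the Davis--Kahan bound $\|ww^\T - \bar v\bar v^\T\|_{\mathrm{op}} = \sqrt{1-\langle w,\bar v\rangle^2}$. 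You instead invoke Courant--Fischer with the test subspace $S = \range(A^\T)+\spann(v)$ and reduce the inner minimization to the smallest eigenvalue of an explicit $2\times2$ matrix $M$, then bound $\lambda_{\min}(M)\ge\det(M)/\trace(M)$. Both are elementary and give the same constant; yours avoids Davis--Kahan entirely at the cost of a small case split on whether the cross term $u^\T y + t\beta$ can vanish.

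One small slip in your closing sentence: you claim $\sigma_k^2\beta^2/L^2\ge\lambda_{\min}(M)$, which is false in general (e.g. $\sigma_k=\beta=\gamma=1$, $L=10$). The correct chain is $s\ge\beta/\|v\|$, hence $\sigma_k^2 s^2\ge\sigma_k^2\beta^2/\|v\|^2\ge\lambda_{\min}(M)$ (the last step uses $\lambda_{\max}(M)\ge\|v\|^2$, which does hold). The slip is harmless for the proof, since what actually matters is that $\sigma_k^2 s^2 \ge \sigma_k^2\beta^2/\|v\|^2 > \sigma_k^2\alpha^2/L^2 \ge \frac{\alpha^2}{2}\min\{1,\sigma_k^2/L^2\}$, so the target bound is met in that regime regardless of how it compares to $\lambda_{\min}(M)$.
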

\begin{proof}
Define $w := \frac{P_{\ker(A)} v}{\norm{P_{\ker(A)} v}}$ and $\bar v = \frac{v}{\|v\|}$.
Observe that by the Davis-Kahan theorem~\cite{DK70}:
\begin{align}\label{eq:rank2distance}
  \opnorm{ww^{\T} - \bar v \bar v^{\T}} = \sqrt{1 - \dotp{w, \bar v}^2} \leq \sqrt{1 - \frac{\alpha^2}{\|v\|^2}}.
\end{align}
Consequently, we have the following
\begin{align*}
\sigma_{k+1}\left(\bmx{A \\ v^{\T}}\right) &=  \lambda_{k+1}(A^{\T} A + vv^{\T})\notag\\
&= \lambda_{k+1}(A^{\T} A + \norm{v}^2 \bar{v} \bar{v}^{\T})\notag\\
&\geq
  \lambda_{k+1}(A^{\T} A + \min\set{\sigma_k^2(A), \norm{v}^2} \bar{v} \bar{v}^{\T}) \notag \\
                                    &\geq
  \lambda_{k+1}(A^{\T} A + \min\set{\sigma_k^2(A), \norm{v}^2} ww^{\T}) -
  \min\set{\sigma_k^2(A), \norm{v}^2} \opnorm{ww^{\T} - \bar{v} \bar{v}^{\T}} \notag \\
                                    &=
  \min\set{\sigma_k^2(A), \norm{v}^2} -
  \min\set{\sigma_k^2(A), \norm{v}^2} \opnorm{ww^{\T} - \bar{v} \bar{v}^{\T}} \notag \\
                                    &\geq
  \min\set{\sigma_{k}^2(A), \norm{v}^2} \left(1 - \sqrt{1 - \frac{\alpha^2}{\norm{v}^2}}\right),
\end{align*}
where the first inequality follows since eigenvalues preserve the Loewner order, the second inequality
follows from Weyl's inequality, the third equality follows from the inclusion $w \in \ker(A)$ and
$\rank(A) = k$, and the third inequality follows from~\eqref{eq:rank2distance}.
Finally, applying $\sqrt{1 - x} \leq 1 - \frac{x}{2}$ to the lower bound above, we obtain
\begin{align*}
  \lambda_{k+1}(A^{\T} A + vv^{\T}) \geq
  \frac{\min\set{\sigma_k^2(A), \norm{v}^2} \alpha^2}{2 \norm{v}^2}
                                    \geq
    \min\set{\frac{\sigma_k^2(A) \alpha^2}{2L}, \frac{\alpha^2}{2}}
\end{align*}
as desired.
\end{proof}

\begin{proof}[Proof of~\cref{proposition:build-bundle-sval-lower-bound}]
  The proof follows by iterating~\cref{lemma:progress-bundle-one-step-general}
  for all $i \leq k$.
\end{proof}

\end{document}